\newtheorem{theorem}{Theorem}[section]
\newtheorem{prop}[theorem]{Proposition}
\newtheorem{lemma}[theorem]{Lemma}
\newtheorem{cor}[theorem]{Corollary}
\newtheorem{definition}[theorem]{Definition}
\theoremstyle{remark}
\newtheorem{remark}[theorem]{Remark}
\newtheorem{example}[theorem]{Example}
\DeclareMathOperator{\colim}{colim}
\DeclareMathOperator{\Hom}{Hom}
\DeclareMathOperator{\Fun}{Fun}
\DeclareMathOperator{\Map}{Map}
\DeclareMathOperator{\Spec}{Spec}
\DeclareMathOperator{\Mod}{Mod}
\DeclareMathOperator{\qc}{QC}
\def\1bord{1\mathrm{Bord}}
\def\2bord{2\mathrm{Bord}}
\def\3bord{3\mathrm{Bord}}
\DeclareMathOperator{\hh}{HH}
\newcommand{\ra}{\rightarrow}
\newcommand{\la}{\leftarrow}
\def\cC{\mathcal C}\def\cD{\mathcal D}
\def\cE{\mathcal E}\def\cF{\mathcal F}\def\cH{\mathcal H}
\def\cL{\mathcal L}
\def\cM{\mathcal M}\def\cO{\mathcal O}\def\cP{\mathcal P}
\def\cS{\mathcal S}
\def\cZ{\mathcal Z}
\def\QQ{\mathbb Q}
\def\ZZ{\mathbb Z}
\def\ot{\otimes}
\def\ot{\otimes}
\def\XYX{X \times_Y X}
\def\fz{\mathfrak z}
\def\ftr{\mathfrak {tr}}
\def\D{{\mathcal D}}
\newcommand{\on}{\operatorname}
\newcommand{\Vect}{\on{Vect}}
\newcommand{\Rep}{\on{Rep}}
\newcommand{\module}{-\on{mod}}
\newcommand{\Ind}{\on{Ind}}
\newcommand{\Perf}{\on{Perf}}
\newcommand{\ti}{\times}
\newcommand{\Id}{{\rm id}}
\newcommand{\intHom}{{\mathcal Hom}}
\newcommand{\intEnd}{{\mathcal End}}
\newcommand{\aff}{\it{aff}}
\newcommand{\mc}{\mathcal}
\def\Aff{\it{Aff}}
\def\risom{\stackrel{\sim}{\ra}}
\def\oo{\infty}
\def\Tr{\mathcal Tr}
\newcommand{\bs}{\backslash}
\newcommand{\Kar}{{st}}
\newcommand\Alg{\mathcal Alg}
\newcommand\Top{\mathcal Top}
\newcommand\ind{\varinjlim}
\begin{document}

\title{integral transforms and Drinfeld centers\\
in derived algebraic geometry}
\author{David Ben-Zvi, John Francis, and David Nadler}

\address{Department of Mathematics\\University of Texas\\Austin, TX 78712-0257}
\email{benzvi@math.utexas.edu}
\address{Department of Mathematics\\Northwestern University\\Evanston, IL 60208-2370}
\email{jnkf@math.northwestern.edu}
\address{Department of Mathematics\\Northwestern University\\Evanston, IL 60208-2370}
\email{nadler@math.northwestern.edu}

\maketitle

\epigraph{\it{Compact objects are as necessary to this subject as
air to breathe}.} {R.W. Thomason to A. Neeman, \cite{neemanbook}}

\begin{abstract}
We study the interaction between geometric operations on stacks and
algebraic operations on their categories of sheaves.  We work in the
general setting of derived algebraic geometry: our basic objects are
derived stacks $X$ and their $\oo$-categories $\qc(X)$ of
quasi-coherent sheaves. (When $X$ is a familiar scheme or stack,
$\qc(X)$ is an enriched version of the usual quasi-coherent derived
category $D_{qc}(X)$.)  We show that for a broad class of derived
stacks, called perfect stacks, algebraic and geometric operations on
their categories of sheaves are compatible.  We identify the category
of sheaves on a fiber product with the tensor product of the
categories of sheaves on the factors.  We also identify the category
of sheaves on a fiber product with functors between the categories of
sheaves on the factors (thus realizing functors as integral
transforms, generalizing a theorem of To\"en~\cite{Toen dg} for
ordinary schemes).  As a first application, for a perfect stack $X$,
consider $\qc(X)$ with its usual monoidal tensor product.  Then our
main results imply the equivalence of the Drinfeld center (or
Hochschild cohomology category) of $\qc(X)$, the trace (or Hochschild
homology category) of $\qc(X)$ and the category of sheaves on the loop
space of $X$. More generally, we show that the $\cE_n$-center and the
$\cE_n$-trace (or $\cE_n$-Hochschild cohomology and homology
categories respectively) of $\qc(X)$ are equivalent to the category of
sheaves on the space of maps from the $n$-sphere into $X$. This
directly verifies geometric instances of the categorified Deligne and
Kontsevich conjectures on the structure of Hochschild cohomology. As a
second application, we use our main results to calculate the Drinfeld
center of categories of linear endofunctors of categories of
sheaves. This provides concrete applications to the structure of Hecke
algebras in geometric representation theory.  Finally, we explain how
the above results can be interpreted in the context of
topological field theory.
\end{abstract}

\tableofcontents



\section{Introduction}
This paper is devoted to the study of natural algebraic operations on
derived categories arising in algebraic geometry.
Our main goal is to identify the category of sheaves on a fiber
product with two algebraic constructions: on the one hand, the tensor product of the categories of sheaves on the factors, and on
the other hand, the category of linear functors between the categories of sheaves on the factors
(thereby realizing functors as integral transforms). 
Among the varied applications of our main results are the calculation of Drinfeld centers
(and higher centers) of monoidal categories of sheaves and the construction of 
topological field theories.

For such questions about the linear algebra of derived categories to be tractable, we work in an enriched setting where we 
replace triangulated categories with a more refined homotopy theory of categories.
Over a ring $k$ of
characteristic zero, one solution is provided by
pre-triangulated differential graded categories
(see for example the survey \cite{Keller}, as well as
\cite{Drinfeld, Toen dg}.) 
The theory of stable $\oo$-categories as
presented in~\cite{topos, dag1} (building on the quasi-categories of~\cite{Joyal}) provides a more general solution. 
We adopt this formalism thanks to the comprehensive foundations
available in~\cite{topos, dag1, dag2, dag3}. 
 In Section~\ref{prelim} below,
we provide a recapitulation of this theory
sufficient for the arguments of this paper.
The reader interested in characteristic zero
applications may consistently substitute differential graded categories 
for $\infty$-categories throughout.

To any scheme or stack $X$, we can assign a
stable $\infty$-category $\qc(X)$ which has the usual unbounded
quasi-coherent derived category $D_{qc}(X)$ as its homotopy category. Tensor product of sheaves provides $\qc(X)$ with the structure of a {symmetric monoidal stable $\infty$-category}.  Our aim is 
to calculate the $\infty$-categories built out of $\qc(X)$ by taking tensor products, linear functors, and more intricate algebraic constructions in terms of the geometry of $X$.

More generally, instead of an ordinary scheme or stack, our starting
point will be a derived stack in the sense of derived algebraic
geometry as developed in~\cite{dag, dag1, dag2, dag3, HAG1, HAG2} (see
also~\cite{Toen} for a concise survey, and Section \ref{prelim} below
for a brief primer).  Recall that stacks and higher stacks arise
naturally from performing quotients (and more complicated colimits) on
schemes.  Thus we correct the notion of forming quotients by passing
to stacks.  Likewise, derived stacks arise naturally from taking fiber
products (and more complicated limits) on schemes and stacks.  Thus we
correct the notion of imposing an equation by passing to derived
stacks.  It is worth mentioning that they also arise naturally in more
general contexts for doing algebraic geometry that are important for
stable homotopy theory.

Recall that the functor of points of a scheme assigns a set to any
commutative ring. Roughly speaking, a derived stack assigns a
topological space to any derived commutative ring (for precise
definitions, see Section \ref{prelim} below).  There are many
variations on what one could mean by a commutative derived ring as
discussed in Section~\ref{derived AG}. For example, in characteristic zero,
one can work with (connective, or homological) commutative
differential graded algebras.  A more general context, which we
typically adopt, is the formalism of connective $\cE_\oo$-ring
spectra.  Since the techniques of this paper apply in all of the usual
contexts, we will often not specify our context further, and use the
term commutative derived ring as a catch-all for any of them.

To any derived stack $X$, we can assign a stable $\infty$-category $\qc(X)$ 
extending the definition for ordinary schemes and stacks. In particular, for an affine derived scheme $X=\Spec R$, 
by definition $\qc(X)$ is the $\infty$-category of $R$-modules
$\Mod_R$ whose  homotopy category is the usual unbounded derived category of $R$-modules.
Tensor product provides $\qc(X)$ with the structure of a {symmetric monoidal stable $\infty$-category}.

Our main technical results are two algebraic identifications of the
$\infty$-category of quasi-coherent sheaves on a
fiber product: first, we identify it with the tensor product of the
$\infty$-categories of sheaves on the factors over the $\infty$-category of sheaves on the base; 
second, we identify it with the $\infty$-category of functors between the $\infty$-categories of sheaves on the factors that 
are linear over the $\oo$-category of sheaves on the base
(thereby realizing functors as integral transforms). 
Our results hold for a broad class of stacks, called perfect stacks, which we introduce in the next section
immediately below.

Our main applications are the calculation of the Drinfeld centers
(and higher $\cE_n$-centers)
of $\infty$-categories of sheaves and functors. For example,
we identify the Drinfeld center of the quasi-coherent affine Hecke category
with sheaves on the moduli of local systems on a torus.
We also explain how all of our results fit into the framework
of 3-dimensional topological field theory (specifically of Rozanksy-Witten type). 
In particular, we verify categorified analogues
of the Deligne and Kontsevich conjectures on the $\cE_n$-structure of Hochschild cohomology.

 
\subsection{Perfect stacks}

For an arbitrary derived stack $X$, the $\infty$-category $\qc(X)$ is difficult to control
algebraically. For example, it may contain large objects that are impossible to construct in terms of concrete, locally-finite objects.

To get a handle on $\qc(X)$, we need to know that it has a small $\infty$-subcategory $\qc(X)^\circ$ of ``generators" which are
``finite" in an appropriate sense.
There are two common notions of when
a small subcategory $\cC^{\circ}$
generates a category $\cC$, and they have natural $\oo$-analogues.
On the one hand, we could ask that $\cC$ be the inductive limit or ind-category $\Ind \cC^\circ$ (i.e., that $\cC$ is freely
generated from $\cC^\circ$ by taking inductive limits), and on the 
other hand, we could ask that in $\cC$ the right orthogonal of $\cC^{\circ}$ vanishes.
When $\cC$ is $\qc(X)$, there are three common notions of which objects may be considered finite: perfect objects, dualizable objects, and compact objects, which refer respectively to the geometry, monoidal structure, and categorical structure of $\qc(X)$.
We review all of these notions and the relations between them in Section \ref{air section}
(in particular, perfect and dualizable objects always coincide).


\medskip

To have a tractable and broadly applicable class of derived stacks, 
we introduce the notion of a {perfect stack}. 
 By definition, an object $M$ of
$\qc(X)$ is a perfect complex if locally for any affine $U\to X$ its restriction to $U$ is a perfect module (finite complex 
of projective modules).
Equivalently, $M$ is dualizable with respect to the monoidal structure on $\qc(X)$.
A derived stack $X$ is said to be perfect if
it has affine diagonal and the $\infty$-category $\qc(X)$ is the inductive limit
$$
\qc(X) \simeq \Ind\Perf(X)
$$
of the full $\infty$-subcategory $\Perf(X)$ of perfect complexes.
A morphism $X\to Y$
is said to be perfect if  its fibers $X\ti_Y U$ over affines $U\to Y$ are perfect.

On a perfect stack $X$, compact objects of $\qc(X)$ are the same thing as perfect complexes
(which in turn are always the same thing as dualizable objects).
In fact, we have the following alternative formulation: a derived stack $X$ is perfect
if it has affine diagonal, $\qc(X)$ is compactly generated (that is, there is no right orthogonal
to the compact objects), and compact objects and dualizable objects coincide.

\begin{remark}

The notion of compactly generated categories is a standard one in
homotopy theory, especially in conjunction with themes such as Brown
representability and Bousfield localization.
Schwede and Shipley
\cite{schwedeshipley} prove in great generality that
compactly generated categories can be expressed as categories of modules.

In algebraic geometry, the
importance of the interplay between compact
and perfect objects
was originally recognized and put to great use by Thomason
\cite{thomasontrobaugh}. These ideas were combined with
homotopical techniques by B\"okstedt and Neeman
\cite{BoN}, and further developed and enhanced by Neeman
\cite{neemanTTY,neeman} and many others \cite{Keller,BvdB, Toen dg}.
The key property of derived categories of quasi-coherent sheaves
on quasi-compact, separated schemes identified in these papers is
that on the one hand, they are compactly generated, and on the other
hand, their compact and perfect objects coincide. 
Such categories
appear as {\em unital algebraic stable homotopy categories} in the
general axiomatic framework developed by Hovey, Palmieri and
Strickland \cite{HPS}. This combination of properties 
underlies the definition of a perfect stack.



\end{remark}

\medskip

In Section \ref{base change}, we establish base change and
the projection formula for perfect morphisms following arguments in Lurie's
thesis~\cite{dag} (in fact, the arguments here only use that the structure
sheaf is relatively compact).

In Section~\ref{sect properties of air}, we show that the class of perfect stacks is very broad.
We show that the following are all perfect stacks:

\begin{enumerate}

\item Quasi-compact derived schemes with affine diagonal (following arguments of \cite{neeman}).

\item The total space of a quasi-projective morphism over a perfect base.

\item In characteristic zero, 
the quotient $X/G$ of a quasi-projective derived scheme $X$ 
by a linear action of an affine group $G$.

\item The quotient $X/G$ of a perfect stack $X$ by a
  finite affine group scheme in ``good" characteristics for
  $G$.


\item The mapping stack $X^\Sigma=\Map(\Sigma, X)$, 
for a perfect stack $X$ and a finite simplicial set $\Sigma$.

\item Fiber products of perfect stacks.

\end{enumerate}

We also show that any morphism $X\to Y$ between perfect stacks is itself perfect.
%

Though the above examples show that perfect stacks cover a broad array
of spaces of interest, it is worth pointing out that there are many
commonly arising derived stacks that are imperfect. Since categories
of quasi-coherent sheaves are usually compactly generated, the typical
reason for $X$ to be imperfect is that the structure sheaf $\cO_X$ (which is
always dualizable) fails to be compact (or equivalently, the global
sections functor fails to preserve colimits). This can occur if the
cohomology $\Gamma(X, \cO_X)$ is too big such as for (1) the
classifying space of a finite group in modular characteristic (for the simplest example,
one can take $B\ZZ/2\ZZ$ when $2$ is not invertible), (2) the classifying space of a
topological group such as $S^1$, or (2) an ind-scheme such as the formal
disc $\on{Spf} k[[t]]$. In the opposite direction, categories of
$\D$-modules on smooth schemes (which can be considered as quasi-coherent sheaves
on the corresponding de Rham stacks) have compact unit, but also many
compact objects (such as $\D$ itself) which are not dualizable.

\subsection{Tensors and functors}

For $X, X'$ ordinary schemes over a ring $k$, a theorem of To\"en
\cite{Toen dg} identifies the dg category of $k$-linear continuous
(that is, colimit preserving)
functors 
with the dg category of integral kernels
$$
\Fun_k(\qc(X), \qc(X'))\simeq \qc(X \times_k X').
$$ 
For dg categories of perfect (equivalently, bounded coherent) complexes on
smooth projective varieties, an analogous result was proved by Bondal,
Larsen and Lunts \cite{BLL} as well as by To\"en \cite{Toen dg} (generalizing
Orlov's theorem \cite{Orlov} characterizing {equivalences} as
Fourier-Mukai transforms).

We've collected our main technical results in the following generalization.
In the statement,
the tensors and functors of $\infty$-categories of quasi-coherent sheaves are
calculated in the symmetric monoidal $\infty$-category $\mc Pr^{\rm L}$
of presentable $\infty$-categories with morphisms left adjoints
(as developed in~\cite[4]{dag2} and~\cite[5]{dag3}, see Section \ref{prelim} for a precise summary).
The tensors and functors of $\infty$-categories of perfect complexes are
calculated in the symmetric monoidal $\infty$-category $\Kar$
of $k$-linear idempotent complete stable small $\infty$-categories
(as developed in Section~\ref{tensors of cats} below).

\begin{theorem}\label{stating main result}

\begin{enumerate}

\item For $X\ra Y\la X'$ maps of perfect stacks, there is a canonical equivalence
$$\qc(X\times_Y X')\simeq \qc(X)\ot_{\qc(Y)}\qc(X')$$
between the $\infty$-category of sheaves on the derived fiber product and the tensor
product of the $\infty$-categories of sheaves on the factors.

There is also a canonical equivalence
$$\Perf(X\times X')\simeq \Perf(X)\ot \Perf(X')$$
for $\infty$-categories of perfect complexes.

\item For $X\to Y$ a perfect morphism to a derived stack $Y$ with affine diagonal,
and $X'\to Y$ arbitrary, there is a
canonical equivalence
$$\qc(X\times_Y X')\simeq \Fun_{\qc(Y)}(\qc(X),\qc(X'))$$
between the $\infty$-category of sheaves on the derived fiber product and 
the $\infty$-category of colimit-preserving $\qc(Y)$-linear functors.

When $X$ is a smooth and proper perfect stack, there is also 
a canonical equivalence
$$\Perf(X\times X')\simeq \Fun(\Perf(X), \Perf(X'))$$
for $\infty$-categories of perfect complexes.

\end{enumerate}
\end{theorem}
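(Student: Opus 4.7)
The plan is to establish part (1) first, via reduction to the affine case, and then to deduce part (2) by combining a self-duality property of $\qc(X)$ with a direct construction of the integral-transform functor. Throughout, the base change and projection formula for perfect morphisms proved in Section~\ref{base change} will be used repeatedly.

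For part (1a), I would start with the affine case $X=\Spec A$, $Y=\Spec B$, $X'=\Spec A'$, where $\qc(X)=\Mod_A$ and the fiber product is $\Spec(A\ot_B A')$. The required equivalence $\Mod_A \ot_{\Mod_B} \Mod_{A'} \simeq \Mod_{A\ot_B A'}$ is standard in the formalism of $\mc Pr^{\rm L}$ developed in~\cite{dag2,dag3}. To extend to arbitrary perfect stacks, I would exploit the identification $\qc(X)\simeq \Ind\Perf(X)$ and the fact that $\Ind$ is symmetric monoidal into $\mc Pr^{\rm L}$. The natural comparison functor
$$\qc(X)\ot_{\qc(Y)}\qc(X') \longrightarrow \qc(X\times_Y X'),\qquad \cF\boxtimes \cF' \mapsto \pi_1^*\cF \ot \pi_2^*\cF'$$
carries compact objects to compact objects on both sides, using that fiber products of perfect stacks are again perfect (as listed in the introduction) so that the right-hand side is again compactly generated. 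Checking that the comparison is an equivalence reduces, via the affine-diagonal hypothesis and the base change of Section~\ref{base change}, to the affine case already handled. Part (1b) follows by restricting to compact objects, since compacts in a tensor product of compactly generated categories form the idempotent completion of the tensor product of compacts, which is precisely $\Perf(X)\ot \Perf(X')$ in $\Kar$.

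For part (2a), the crucial input is self-duality: for $X\to Y$ a perfect morphism, the $\qc(Y)$-linear category $\qc(X)$ is canonically self-dual, i.e.\ $\qc(X)^\vee \simeq \qc(X)$. This holds because the perfect complexes on $X$ relative to $Y$ are all dualizable, giving $\Perf(X)^{op}\simeq \Perf(X)$ under dualization, and Ind-completion transports this to a self-duality of $\qc(X)$ as a $\qc(Y)$-linear category. Combined with the tautological identity $\Fun_\cA^L(\cC,\cD)\simeq \cC^\vee \ot_\cA \cD$ for dualizable $\cC$, together with the suitable extension of part (1a) allowing $X'$ to be arbitrary, we obtain
$$\Fun_{\qc(Y)}(\qc(X),\qc(X')) \simeq \qc(X)^\vee \ot_{\qc(Y)} \qc(X') \simeq \qc(X)\ot_{\qc(Y)}\qc(X') \simeq \qc(X\times_Y X').$$
Equivalently, one may construct the integral-transform functor $K\mapsto (\cF\mapsto \pi_{2*}(K\ot \pi_1^*\cF))$ by hand and verify it is an equivalence directly using base change and the projection formula. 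For (2b), the smoothness and properness of $X$ ensure that pushforward preserves perfect complexes and that the self-duality restricts to one on $\Perf(X)$ inside $\Kar$; the equivalence of (2a) then restricts to perfect objects on both sides.

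The principal obstacle is upgrading the affine tensor-product identification to arbitrary perfect stacks in part (1): the essential content is showing that the comparison functor is an equivalence on compact objects, which combines the perfect stack hypothesis (compact generation by perfect complexes), the closure of perfect stacks under fiber products, and the base change of Section~\ref{base change}. A secondary difficulty is the relative nature of the self-duality needed for (2): formulating $\qc(X)^\vee\simeq \qc(X)$ as $\qc(Y)$-linear categories rather than merely absolutely requires careful bookkeeping with the monoidal structure on $\qc(X)$ over $\qc(Y)$ and with the behaviour of Ind-completion in the relative setting.
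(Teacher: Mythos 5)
Your outline matches the paper's broad architecture (absolute/affine case first, compact generation, then self-duality to convert tensors into functors), but the two places where the real work happens are asserted rather than argued, and as stated they do not go through. First, for part (1a) over a general perfect base $Y$, the claim that the comparison functor ``reduces, via the affine-diagonal hypothesis and base change, to the affine case'' is not an argument: the relative tensor product $\qc(X)\ot_{\qc(Y)}\qc(X')$ is by definition a colimit in $\mc Pr^{\rm L}$ (the geometric realization of the two-sided bar construction), and neither its mapping spaces nor its compact objects can be read off by covering $Y$ with affines --- commuting $\ot_{\qc(Y)}$ past the limit presentations $\qc(X')\simeq\lim\Mod_{A'}$ or $\qc(Y)\simeq\lim\Mod_B$ requires dualizability statements that are themselves consequences of the theorem, so the naive reduction is circular. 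The paper's proof of this step is genuinely monadic: it identifies the bar construction term-by-term with $\qc$ of the \u Cech-type diagram $X\ti Y^{\ti\bullet}\ti X'$ using the absolute case, observes that the structure maps are both left and right adjoints (pushforward along affine maps), re-expresses the realization as a totalization in $\mc Pr^{\rm R}$, and then applies Barr--Beck on both sides to reduce to comparing two monads on $\qc(X\ti X')$, which agree by base change. Some argument of this kind (or an equivalent descent statement) is needed; your proposal is missing it.

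Second, part (2) is stated for $Y$ an arbitrary derived stack with affine diagonal and $X'\to Y$ arbitrary, so it cannot be deduced from self-duality plus part (1a), which require $Y$ and $X'$ perfect; your appeal to ``the suitable extension of part (1a) allowing $X'$ to be arbitrary'' is exactly the unproved point (and the paper never proves such a tensor formula in that generality --- it proves the functor statement directly, working locally in $X'$ and in $Y$, using that $\Mod_A$ is a self-dual $\qc(Y)$-module for any affine $\Spec A\to Y$, which rests on the module-category duality $\Mod_{A^{\rm op}}(\cC)\simeq\Mod_A(\cC)^\vee$). Finally, your self-duality of $\qc(X)$ over $\qc(Y)$ via $\Perf(X)^{\rm op}\simeq\Perf(X)$ and Ind-completion is a different route from the paper's, which constructs the unit and trace geometrically as $\Delta_*\pi^*$ and $\pi_*\Delta^*$ through the identification $\qc(X\ti_Y X)\simeq\qc(X)\ot_{\qc(Y)}\qc(X)$ and verifies the triangle identity by base change; making your formal version genuinely $\qc(Y)$-linear (identifying $\Fun_{\qc(Y)}(\qc(X),\qc(Y))$ with the Ind-completion of relative duals) is itself a nontrivial claim that you would need to establish, whereas the geometric construction sidesteps it. The perfect-complex statements (1b) and (2b) you handle essentially as the paper does, granting the presentable versions.
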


Our arguments in Section~\ref{integral} first establish
the pair of assertions about tensors, then deduce sufficient duality to
conclude the pair of assertions about functors.

\begin{remark} The hypothesis that our stacks are perfect appears to be essential, and we do not expect the above theorem to hold in significantly greater generality. 
Alternatively, in complete generality, one should rather replace the
notion of tensor product. Jacob Lurie has described (in private
communication) a completed tensor product for stable
$\infty$-categories equipped with $t$-structures, and such that
passing to quasi-coherent sheaves takes fiber products of geometric
stacks to the completed tensor product of $\infty$-categories.
\end{remark}


\subsection{Centers and traces}\label{intro centers}
A basic operation on associative algebras is the calculation of their center.
The derived version of the center of an associative algebra $A$ is the
Hochschild cochain complex (or simply the Hochschild cohomology), which calculates the
derived endomorphisms of $A$ as an $A$-bimodule. 
Another basic operation is the calculation of the universal trace (i.e., the universal
target for a map out of $A$ coequalizing left and right multiplication).
The derived version of the universal trace is the Hochschild chain complex (or the
Hochschild homology),
which calculates the derived tensor product of $A$ with itself as an $A$-bimodule.
%

In Section~\ref{defining centers},
we extend
the notion of Hochschild homology and cohomology to associative (or $\cE_1$-)algebra objects in 
arbitrary closed symmetric monoidal $\infty$-categories.
(As with any structure in an $\oo$-category, 
an associative multiplication, or $\cE_1$-structure, is a homotopy coherent notion.) 
In the case of chain complexes,
we recover the usual Hochschild chain and cochain complexes.
In the case of spectra, 
we recover topological Hochschild homology and cohomology.


\begin{definition}
Let $A$ be an associative algebra object in a closed symmetric monoidal
$\oo$-category~$\cS$.

\begin{enumerate}

\item The derived center or Hochschild cohomology $\cZ(A)=\hh^*(A)\in \cS$
is the endomorphism object $\intEnd_{A\ot A^{\rm op}}(A)$ of $A$ as an $A$-bimodule.

\item The derived trace or Hochschild homology $\Tr(A)=\hh_*(A)\in \cS$ is 
the pairing object ${A\ot_{A\ot A^{\rm op}} A}$
 of $A$ with itself as an $A$-bimodule.

\end{enumerate} 
\end{definition}
We show in particular that $\hh^*(A)$ and $\hh_*(A)$ are calculated in this generality
by a version of the usual Hochschild complexes, the cyclic bar construction.

We apply this definition in the following setting.
We will take $\cS$ to be the $\oo$-category
${\mc Pr}^{\rm L}$ of presentable $\oo$-categories with morphisms left adjoints.
Then an associative algebra
object in $\mc Pr^{\rm L}$ is a monoidal presentable $\infty$-category $\cC$. 
Thus we have the notion of its 
center (or Hochschild cohomology category) 
and trace (or
Hochschild homology category)
$$
\cZ(\cC)=\Fun_{\cC\otimes \cC^{\rm op}} (\cC , \cC)
\qquad
\Tr(\cC) =\cC\ot_{\cC\ot \cC^{\rm op}} \cC.
$$
These are again presentable $\infty$-categories, 
or in other words, objects of the $\oo$-category $\mc Pr^{\rm L}$.
The center $\cZ(\cC)$ comes equipped with a universal central functor to $\cC$,
and the trace $\Tr(\cC)$ receives a universal trace functor from $\cC$.

\begin{remark}
The above notion of center provides a derived version of the Drinfeld center
of a monoidal category (as defined in \cite{JS}).
To appreciate the difference, consider the abelian tensor category $R\module$ of modules over a
(discrete) commutative ring $R$. Its classical Drinfeld center is $R\module$ again since there are no nontrivial $R$-linear braidings for $R$-modules.
But as we will see below, the derived center of the $\oo$-category of $R$-modules
is the $\oo$-category of modules over the Hochschild chain complex of $R$.
\end{remark}

\begin{remark}
It is important not to confuse the center $\cZ(\cC)$
of a monoidal $\infty$-category $\cC$ with the endomorphisms of the identity functor of 
the underlying $\infty$-category. The former is again an $\infty$-category depending
on the monoidal structure of $\cC$, while the latter
is an algebra with no relation to the monoidal structure of $\cC$. For example if $\cC=A\module$ is modules
over an associative of $A_\oo$-algebra $A$, then the endomorphisms of the identity of $\cC$
are calculated by the (topological) Hochschild cohomology of $A$, not of the $\oo$-category $\cC$.
\end{remark}

Now let us return to a geometric setting and consider a perfect stack $X$
and the presentable $\infty$-category $\qc(X)$. Since $\qc(X)$ is symmetric monoidal,
it defines a commutative (or $\cE_\oo$-)algebra object in $\mc Pr^{\rm L}$, and so in particular, 
an associative algebra object.

To calculate the center and trace of $\qc(X)$,
we introduce the loop space
$$
\cL X=\Map(S^1, X) \simeq X \times_{X\times X} X
$$
where the derived fiber product is along two copies of the diagonal
map. 

For example, when $X$ is an ordinary smooth scheme over a field of characteristic zero, the loop space $\cL X$ is 
the total space $T_X[-1] =\Spec \on{Sym} \Omega_X[1]$ of the shifted tangent bundle of $X$.
When $X$ is the classifying space $BG$ of a group $G$, the loop space
$\cL BG$ is the adjoint quotient $G/G$.

In Section~\ref{defining centers}, as a corollary of our main technical results,
we obtain the following.

\begin{theorem} For a perfect stack $X$,
there are canonical equivalences
$$\cZ(\qc(X))\simeq \qc(\cL X) \simeq \Tr(\qc(X))
$$ between its center, trace, and the $\oo$-category of sheaves on its
loop space.
\end{theorem}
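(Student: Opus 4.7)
The plan is to derive both equivalences as immediate consequences of Theorem~\ref{stating main result} applied to the diagonal morphism $\Delta\colon X\to X\times X$. The key geometric input is the identification of the loop space as the self-intersection of the diagonal, $\cL X \simeq X\times_{X\times X}X$, which is built into its definition. The key algebraic input is that, because $\qc(X)$ is symmetric monoidal, the algebra with opposite multiplication $\qc(X)^{\rm op}$ is canonically equivalent to $\qc(X)$ itself, and therefore Theorem~\ref{stating main result}(1) yields
$$\qc(X)\ot\qc(X)^{\rm op}\simeq \qc(X)\ot\qc(X)\simeq \qc(X\times X).$$
Under this equivalence, the $\qc(X)\ot\qc(X)^{\rm op}$-bimodule structure on $\qc(X)$ corresponds to the $\qc(X\times X)$-module structure obtained by pullback along $\Delta$ (which categorifies the fact that, for a commutative algebra $A$, the $A\ot A^{\rm op}$-bimodule structure on $A$ is pulled back from $A\ot A$ along the multiplication map).

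To establish the trace equivalence I apply Theorem~\ref{stating main result}(1) to the fiber product square defining $\cL X$. Since $X$ is perfect, the product $X\times X$ is perfect and the diagonal $\Delta\colon X\to X\times X$ is automatically a perfect morphism, both by the stability properties of perfect stacks assembled in Section~\ref{sect properties of air}. Combining these observations with the identification of bimodule structures above gives
$$\Tr(\qc(X))=\qc(X)\ot_{\qc(X)\ot\qc(X)^{\rm op}}\qc(X)\simeq \qc(X)\ot_{\qc(X\times X)}\qc(X)\simeq \qc(X\times_{X\times X}X)\simeq \qc(\cL X).$$

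For the center equivalence the plan is to apply Theorem~\ref{stating main result}(2) to the same diagram. The hypotheses are satisfied: $X\times X$ has affine diagonal as a perfect stack, and $\Delta$ is a perfect morphism as above. Hence
$$\cZ(\qc(X))=\Fun_{\qc(X)\ot\qc(X)^{\rm op}}(\qc(X),\qc(X))\simeq \Fun_{\qc(X\times X)}(\qc(X),\qc(X))\simeq \qc(X\times_{X\times X}X)\simeq \qc(\cL X).$$
The main obstacle I expect is not any individual step but the bookkeeping of module structures: one must verify that the symmetric monoidal identification $\qc(X)^{\rm op}\simeq\qc(X)$ intertwines the bimodule action with pullback along $\Delta$ (rather than, say, with the swap composed with $\Delta$), and that the equivalences provided by Theorem~\ref{stating main result} are compatible with the relative tensor product and relative functor $\infty$-category appearing in the definitions of $\Tr$ and $\cZ$. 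These compatibilities should follow from naturality of the constructions in Theorem~\ref{stating main result}, but they are essential for the two displayed chains of equivalences to parse unambiguously.
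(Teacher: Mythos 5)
Your proposal is correct and follows essentially the same route as the paper: the trace equivalence comes from Theorem~\ref{stating main result}(1) (i.e.\ Theorem~\ref{Thomason tensor}) applied to $\cL X\simeq X\times_{X\times X}X$, and the center equivalence from Theorem~\ref{stating main result}(2) (i.e.\ Corollary~\ref{transforms with air}), using the symmetric monoidal identification $\qc(X)^{\rm op}\simeq\qc(X)$ so that $\qc(X)\ot\qc(X)^{\rm op}\simeq\qc(X\times X)$ with the bimodule structure given by pullback along the diagonal. The paper's own proof is exactly this, phrased slightly more tersely (it identifies $\cZ(\qc(X))$ with $\Tr(\qc(X))$ via the self-duality of $\qc(X)$ underlying Corollary~\ref{transforms with air}), so your extra care about the module-structure bookkeeping is a welcome but not divergent elaboration.
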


Note that the theorem in particular identifies the Hochschild homology
and cohomology objects associated to the monoidal $\oo$-category
$\qc(X)$. While such an identification may initially appear surprising, it is a natural
consequence of the self-duality of $\qc(X)$ over $\qc(X \ti X)$
which in turn  is a simple consequence of
Theorem \ref{stating main result}.

\begin{remark}
The theorem is a direct generalization of a result of
Hinich~\cite{Hinich}.  He proves that for $X$ a Deligne-Mumford stack
admitting an affine orbifold chart, the Drinfeld center of the
(abelian) tensor category of quasi-coherent sheaves on $X$ is
equivalent to the category of quasi-coherent sheaves on the inertia
orbifold of $X$ (with braided monoidal structure coming from
convolution).  One can recover this from the above theorem by passing
to the hearts of the natural $t$-structures.
\end{remark}

In Section~\ref{sect higher centers}, we also discuss a generalization
of the theorem to $\cE_n$-centers and traces when $n>1$. First, we
introduce the notion of center and trace for an $\cE_n$-algebra object
in $\mc Pr^{\rm L}$, for any $n$. Since $\qc(X)$ is an
$\cE_\oo$-algebra object in $\mc Pr^{\rm L}$, it is also an
$\cE_n$-algebra object, for any $n$.

We show that the $\cE_n$-center and trace of $\qc(X)$ are equivalent
to the $\oo$-category of quasi-coherent sheaves on the derived mapping space
$$
X^{S^n} = \Map(S^n, X).
$$

For example, when $X$ is an ordinary smooth scheme over a field of characteristic zero, the $n$-sphere space $X^{S^n}$ is 
the total space $T_X[-n] =\Spec \on{Sym} \Omega_X[n]$ 
of  the shifted tangent bundle of $X$.
In particular, as we vary $n$,
the $\cE_n$-centers and traces of $\qc(X)$ differ from $\qc(X)$, though they
all have $t$-structures with heart the abelian category of
quasi-coherent sheaves on $X$.

When $X$ is the classifying space $BG$ of a group $G$, the $n$-sphere space $BG^{S^n}$
 can be interpreted as the derived stack 
 ${\mc
Loc}_G(S^n)$ of $G$-local systems on $S^n$. 
In particular, when $n=2$, the $\cE_2$-center and trace of $\qc(BG)$
is the $\infty$-category $\qc({\mc Loc}_G(S^2))$ which 
appears in the Geometric Langlands program
(see for example, the work of
Bezrukavnikov and Finkelberg \cite{BezFink} 
who identify  $\qc({\mc
Loc}_G(S^2))$ with the derived Satake
(or spherical Hecke) category of arc-group equivariant constructible sheaves
on the affine Grassmannian for the dual group).


\subsection{Hecke categories}
Let $X\to Y$ be a map of perfect stacks.
Our main technical result gives an identification of (not necessarily symmetric)
monoidal $\infty$-categories
$$
\qc(\XYX) \simeq \Fun_{\qc(Y)}(\qc(X), \qc(X))
$$
where the left hand side is equipped with convolution
and the right hand side with the composition of functors.
In other words, we have an identification of associative algebra objects
in the $\oo$-category
${\mc Pr}^{\rm L}$ of presentable $\oo$-categories with morphisms left adjoints.

In Section~\ref{convolution}, we calculate the center of $\qc(\XYX)$
in the following form. Recall that $\cL Y$ denotes the derived loop space of $Y$.

\begin{theorem}
Suppose $p:X\to Y$ is a map of perfect stacks that satisfies descent. Then there
is a canonical equivalence
$$\cZ(\qc(\XYX))\simeq\qc({\cL Y})$$
in which the central functor $$\qc({\cL Y})\simeq \cZ(\qc(\XYX))\to
\qc(\XYX)$$ is given by pullback and pushforward along the
correspondence $$\cL Y\longleftarrow \cL Y\times_Y X \longrightarrow
X\times_Y X.$$ 
\end{theorem}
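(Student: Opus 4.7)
The plan is to combine Theorem \ref{stating main result}(2) with Morita theory for associative algebras in $\mc Pr^{\rm L}$, reducing $\cZ(\qc(\XYX))$ to the previously established center of $\qc(Y)$. Theorem \ref{stating main result}(2) identifies $\qc(\XYX)\simeq\Fun_{\qc(Y)}(\qc(X),\qc(X))$ as associative algebras in $\mc Pr^{\rm L}$, under which $\qc(X)$ becomes a $\qc(\XYX)$-$\qc(Y)$-bimodule. The composition $\qc(X)\otimes_{\qc(Y)}\qc(X)^{\vee}\simeq \qc(\XYX)$ is tautological from the endomorphism description, while the descent hypothesis for $p:X\to Y$ is, via Barr-Beck-Lurie, exactly the comonadicity of $p^{*}:\qc(Y)\to\qc(X)$, i.e.\ $\qc(Y)\simeq\Comod_{p^{*}p_{*}}(\qc(X))$. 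Dualizing yields the complementary relation $\qc(X)^{\vee}\otimes_{\qc(\XYX)}\qc(X)\simeq\qc(Y)$. Thus $\qc(X)$ is an invertible bimodule, exhibiting Morita equivalence of $\qc(\XYX)$ and $\qc(Y)$ as algebras in $\mc Pr^{\rm L}$.

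Since the center depends only on the $\infty$-category of modules and Morita equivalence preserves this (the regular bimodule on each side corresponding under the equivalence), we conclude
\[
\cZ(\qc(\XYX))\ \simeq\ \cZ(\qc(Y))\ \simeq\ \qc(\cL Y),
\]
the second equivalence being the previously established calculation of the center of the sheaf category on a perfect stack. To identify the central functor, trace through the Morita correspondence: an element $F\in\qc(\cL Y)\simeq\cZ(\qc(Y))$ acts as a natural endotransformation of every $\qc(Y)$-module, and applied to the bimodule $\qc(X)$ produces a $\qc(Y)$-linear endofunctor—i.e.\ the kernel $K\in\qc(\XYX)\simeq\Fun_{\qc(Y)}(\qc(X),\qc(X))$ obtained under the central functor. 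By Theorem \ref{stating main result}(1), $\qc(\cL Y)\otimes_{\qc(Y)}\qc(X)\simeq\qc(\cL Y\times_Y X)$, and the action of $\cZ(\qc(Y))$ on $\qc(X)$ factors through the central functor $\qc(\cL Y)\to \qc(Y)$ (pushforward along the projection $\cL Y\to Y$). Base change and the projection formula from Section \ref{base change} then identify $K$ with the pullback-pushforward along the correspondence $\cL Y\leftarrow \cL Y\times_Y X\to \XYX$.

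The main obstacle is to set up the Morita equivalence rigorously at the $\infty$-categorical level in $\mc Pr^{\rm L}$: one must correctly identify the $\qc(Y)$-linear dual of $\qc(X)$ (using perfectness of $X$ to reduce to dualizability of perfect complexes and extend to the ind-level), extract invertibility of the bimodule from the comonadic form of Barr-Beck-Lurie applied to descent, and verify that this Morita equivalence preserves the absolute (as opposed to $\qc(Y)$-relative) center. A secondary bookkeeping task is matching the abstract central functor with the geometric correspondence, which is an application of base change and the projection formula.
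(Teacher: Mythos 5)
Your route is genuinely different from the paper's: you aim to exhibit $\qc(X)$ as an invertible $\qc(\XYX)$--$\qc(Y)$-bimodule, conclude a Morita equivalence, and then quote the absolute computation $\cZ(\qc(Y))\simeq\qc(\cL Y)$; the paper instead computes $\cZ(\qc(\XYX))$ directly as the totalization of the relative Hochschild cochain complex over $B=\qc(X)$, identifies its terms via Theorem \ref{stating main result} with $\qc(\cL Y\ti_Y X\ti_Y\cdots\ti_Y X)$ and its coboundaries with the \u Cech pullbacks for $\cL Y\ti_Y X\to \cL Y$, and then invokes descent for this base-changed map. If your Morita step were established, the rest (Morita invariance of the center in $\mc Pr^{\rm L}$, the identification of the central functor by base change and the projection formula) is plausible bookkeeping.

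However, there is a genuine gap at the crucial step. The half of invertibility you call tautological, $\qc(X)\ot_{\qc(Y)}\qc(X)^\vee\simeq\qc(\XYX)$, is indeed Corollary \ref{transforms with air} plus self-duality. But the other half, $\qc(X)^\vee\ot_{\qc(\XYX)}\qc(X)\simeq\qc(Y)$, does not follow by ``dualizing'' the comonadicity statement $\qc(Y)\simeq\Comod_{p^*p_*}(\qc(X))$. Descent is a statement about a \emph{limit}: the totalization of the cosimplicial diagram $\qc(X^{\ti_Y(\bullet+1)})$ with \u Cech pullbacks as structure maps. The bimodule pairing you need is a \emph{colimit}: the two-sided bar construction over the convolution algebra, whose terms are $\qc(X^{\ti_Y(2n+2)})$ (or $\qc(X\ti(\XYX)^{\ti n}\ti X)$ if taken over $\Mod_k$) and whose face maps are push-pull convolution functors, not pullbacks. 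Converting one into the other requires passing between $\mc Pr^{\rm L}$ and $\mc Pr^{\rm R}$, identifying the right adjoints of all the convolution structure maps via base change, and comparing two differently shaped simplicial diagrams; this is exactly where the real content of the theorem lives, and it is the work the paper does with the relative cyclic bar construction. As written, your proposal asserts the conclusion of this step rather than proving it (and you acknowledge as much in your final paragraph), so the argument is incomplete until the invertibility of the bimodule is actually established from the descent hypothesis.
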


When $p:X\to Y$ is also proper with invertible dualizing complex, 
we deduce an analogous statement identifying the trace of $\qc(\XYX)$ with
$\qc(\cL Y)$, conditional
on the validity of Grothendieck duality in the derived setting.

\begin{remark}
One can find a precursor to the above in the work of M\"uger \cite{Mueger} and Ostrik \cite{Ostrik}.

Given a semisimple abelian monoidal category
$\cC$ and a module category $M$, consider the monoidal
category 
$\cC_M^*$ 
consisting of $\cC$-linear
endofunctors of $M$. Then 
 independently of $M$, there is a 
canonical identification of the Drinfeld centers
 of $\cC_M^*$ and  $\cC.$
 
A motivating example is when $H\subset G$ are finite
groups, and one takes $\cC=\Rep(G)$ and
$M=\Rep(H)$, so that $ \cC^*_M \simeq \Vect(H\bs G/H)$. 
Then independently of $H$,
the center of  $\Vect(H\bs G/H)$ is the category of adjoint equivariant
vector bundles on $G$.

The above theorem extends this picture from finite groups to algebraic groups.
\end{remark}

\subsubsection{Example: affine Hecke categories}
As an illustration, we briefly mention a concrete application of the
above theorem to a fundamental object in geometric representation
theory.

Fix a reductive group $G$, and consider the Grothendieck-Springer
resolution $\tilde G\to G$ of pairs of a group element and a Borel
subgroup containing it.  Let $\cH_G^{\aff}$ be the quasi-coherent
affine Hecke $\infty$-category of $G$-equivarant quasi-coherent
sheaves on the Steinberg variety
$$
{\mathcal St}_G=\tilde{G}\times_{G}\tilde{G}.
$$
Work of Bezrukavnikov \cite{Roma ICM} and others
 places $\cH_G^{\aff}$ at the heart of many recent developments
in geometric representation theory.

Let us apply the above theorem with $X= \tilde G /G$ and $Y=G/G = \cL
BG$, where $G$ acts via conjugation.  Observe that the iterated loop
space $\cL Y=\cL (G/G) = \cL (\cL BG)$ is nothing more than the
derived moduli stack
$${\mc Loc}_G(T^2) =\Map(T^2, BG)
$$ of $G$-local systems on the torus.  Concretely, ${\mc Loc}_G(T^2)$
is the ``commuting variety" (or rather, commuting derived stack)
parameterizing pairs of commuting elements in $G$ up to simultaneous
conjugation.

\begin{cor} There is a canonical equivalence
$$\cZ(\cH_G^{\aff})\simeq \qc(\cL(\cL BG))\simeq \qc((BG^{S^1})^{S^1})\simeq \qc({\mc Loc}_G(T^2))$$
between the center of the quasi-coherent affine Hecke $\infty$-category and
the $\oo$-category of sheaves on the derived moduli stack of $G$-local systems on the torus.
\end{cor}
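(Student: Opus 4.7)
The plan is to obtain the corollary as a direct application of the preceding theorem on centers of convolution categories, followed by a purely formal computation of the iterated loop space in terms of mapping spaces.

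First, I would set up the geometric picture so that the Hecke category fits into the theorem's format. Take $X = \tilde G/G$ and $Y = G/G = \cL BG$, where $G$ acts by conjugation on itself and by the natural action on the Grothendieck-Springer resolution $\tilde G \to G$. The Grothendieck-Springer map is $G$-equivariant, so it descends to a map $p: X \to Y$. Since $\tilde G \times_G \tilde G$ is the Steinberg variety ${\mathcal St}_G$, we identify
$$ \cH_G^{\aff} = \qc^G({\mathcal St}_G) \simeq \qc(\tilde G/G \times_{G/G} \tilde G/G) = \qc(X \times_Y X) $$
as monoidal $\infty$-categories (with convolution structure on the right-hand side).

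Second, I would verify the two hypotheses of the convolution-center theorem: that $X$ and $Y$ are perfect stacks and that $p$ satisfies descent. Perfectness follows from item (3) of the list in Section~\ref{sect properties of air}: working in characteristic zero, both $\tilde G/G$ and $G/G$ are quotients of quasi-projective schemes ($\tilde G$ and $G$) by a linear action of the reductive group $G$. For descent, one uses that the Grothendieck-Springer morphism $\tilde G \to G$ is proper and surjective (indeed, a resolution of singularities generically an iso over the regular locus, but proper and surjective is what matters); after passing to the $G$-quotients, $p: \tilde G/G \to G/G$ retains these properties, and descent for $\qc$ along proper surjective maps in the derived setting follows from base change and the projection formula established in Section~\ref{base change}. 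This is the step I expect to require the most care: one must check that the conditions used in the theorem (which in the paper are phrased as ``satisfies descent'') apply to the Grothendieck-Springer situation and not merely to smooth or flat covers.

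Third, once the hypotheses are in place, the theorem yields $\cZ(\cH_G^{\aff}) \simeq \qc(\cL Y)$. It remains to compute $\cL Y$ for $Y = G/G = \cL BG$. This is a purely formal calculation using that loop spaces are mapping spaces out of $S^1$ and that mapping spaces compose as
$$ \Map(A, \Map(B, Z)) \simeq \Map(A \times B, Z). $$
Applying this with $A = B = S^1$ and $Z = BG$ gives
$$ \cL Y = \cL(\cL BG) = \Map(S^1, \Map(S^1, BG)) \simeq \Map(S^1 \times S^1, BG) = \Map(T^2, BG) = {\mc Loc}_G(T^2), $$
which is the commuting derived stack of pairs of commuting elements of $G$ modulo simultaneous conjugation. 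Chaining the equivalences yields the claimed identification
$$ \cZ(\cH_G^{\aff}) \simeq \qc(\cL(\cL BG)) \simeq \qc((BG^{S^1})^{S^1}) \simeq \qc({\mc Loc}_G(T^2)), $$
completing the proof.
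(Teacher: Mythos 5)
Your route is the paper's own: the corollary is obtained by applying Theorem~\ref{thm convolution} with $X=\tilde G/G$ and $Y=G/G=\cL BG$, together with the identification $\cH_G^{\aff}\simeq\qc(\tilde G/G\times_{G/G}\tilde G/G)$ as convolution categories and the exponential-law computation $\cL(\cL BG)=\Map(S^1,\Map(S^1,BG))\simeq\Map(S^1\times S^1,BG)={\mc Loc}_G(T^2)$; your appeal to Corollary~\ref{quasiproj} for perfectness of $\tilde G/G$ and $G/G$ (characteristic zero, $\tilde G$ quasi-projective, $G$ affine, conjugation action linearizable) is also exactly what the paper intends.

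The one step that does not hold up as written is your justification of the descent hypothesis. What Theorem~\ref{thm convolution} uses is that $\qc$ of the base (and, after base change, of $\cL Y$) is recovered as the totalization of $\qc$ of the \v{C}ech nerve of $p$, i.e.\ comonadicity of $p^*$. Base change and the projection formula from Section~\ref{base change} give something different: they show $p_*$ is colimit-preserving and, when conservative, monadic, identifying $\qc(X)$ with modules over $p_*\cO_X$ in $\qc(Y)$ --- a statement about the source category, not \v{C}ech descent for the target. Moreover the Grothendieck--Springer map is proper and surjective but not flat (its fibers jump), so faithfully flat descent is unavailable, and descent of unbounded quasi-coherent complexes along proper surjections is a genuinely additional input not established anywhere in this paper. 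The paper itself does not discharge this hypothesis: ``satisfies descent'' is an assumption of Theorem~\ref{thm convolution}, and the corollary is stated as an application under it. So your conclusion and overall structure agree with the paper, but you should either leave descent as a standing hypothesis, as the paper does, or supply an actual proper-descent argument for this particular map rather than deriving it from base change and the projection formula.
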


A similar
statement holds replacing the Grothendieck-Springer resolution $\tilde G\to G$
by the Springer resolution $T^*G/B\to {\mathcal N}$ of the
nilpotent cone. In this case, the center is equivalent to the $\oo$-category  of sheaves on the derived stack 
of pairs of a nilpotent and a
commuting group element up to simultaneous conjugation. 
In this linear version, one can also work
$\mathbb G_m$-equivariantly via the natural dilation action.

\begin{remark} 
We will not return to specific applications to representation theory in this paper,
but the interested reader will find further results along these lines in the paper \cite{character}
which studies integral
transforms in the context of $\D$-modules.  It includes applications to
the more familiar Hecke categories $\D(B\bs G/B)$ of $\D$-modules on
flag varieties. In particular, their Drinfeld centers are identified with
character sheaves on $G$, resulting in a Langlands duality for
 character sheaves.
\end{remark}

\subsection{Topological field theory}
As we discuss in Section~\ref{TFT}, our results may be viewed from the
perspective of topological field theory.  

The $\oo$-category $ \qc(\cL X)$ (the Drinfeld center $\cZ(\qc(X))$)
carries a rich collection of operations generalizing the braided
tensor structure on modules for the classical Drinfeld double.
Namely, for any cobordism $\Sigma$ between disjoint unions of circles
$(S^1)^{\coprod m}$ and $(S^1)^{\coprod n}$, we obtain restriction
maps between the corresponding mapping stacks
$$(\cL X)^{\times m}\leftarrow X^\Sigma \rightarrow (\cL X)^{\times
  n}.$$ Pullback and pushforward along this correspondence defines a
functor
$$
\qc(\cL X)^{\ot m}\to \qc(\cL X)^{\ot n}
$$ which is compatible with composition of cobordisms.  In particular,
we obtain a map from the configuration space of $m$ small disks in the
standard disk to functors
$$
\qc(\cL X)^{\ot m}\to \qc(\cL X).
$$

This equips $\qc(\cL X)$, and hence the Drinfeld center $\cZ(\qc(X))$,
with the structure of a (framed) $\cE_2$-category.  This establishes
the categorified (cyclic) Deligne conjecture in the current geometric
setting.

In particular for $X=BG$, $X^\Sigma$ is the derived moduli stack of
$G$-local systems on $\Sigma$ and we obtain a topological gauge
theory.

More generally, we have the following corollary of our main results:

\begin{cor}
Let $X$ be a perfect stack, and let $\Sigma$ be a finite simplicial
set. Then there is a canonical equivalence $\qc(X^\Sigma)\simeq
\qc(X)\otimes \Sigma$, where $X^{\Sigma}=\Map(\Sigma, X)$ denotes the
derived mapping stack, and $-\otimes \Sigma$ the tensor of stable
$\oo$-categories over simplicial sets.
\end{cor}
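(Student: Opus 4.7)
The plan is to induct on the cellular structure of $\Sigma$, converting each attaching pushout of simplicial sets into a tensor product of $\infty$-categories via Theorem~\ref{stating main result}(1). We work throughout with $\qc(X)$ regarded as a commutative algebra object in $\mc Pr^{\rm L}$, so that $\qc(X)\otimes\Sigma$ is the $\Sigma$-indexed colimit of the constant diagram at $\qc(X)$ taken in commutative algebras (where coproducts are computed as tensor products).

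The key observation is that the two sides of the desired equivalence are governed by parallel colimit formulas. On the geometric side, the derived mapping stack functor $\Map(-,X)$ is contravariant and sends homotopy colimits of simplicial sets to homotopy limits of derived stacks. On the algebraic side, the tensor $\qc(X)\otimes(-)$ preserves colimits of simplicial sets by construction. Any finite simplicial set $\Sigma$ can be built by iteratively attaching non-degenerate simplices, and so decomposes as a pushout $\Sigma \simeq \Sigma' \cup_{\partial\Delta^n} \Delta^n$ with $\Sigma'$ having strictly fewer non-degenerate simplices. Applying $\Map(-,X)$ and using that $\Delta^n$ is contractible (so $X^{\Delta^n}\simeq X$) yields
\[
X^\Sigma \;\simeq\; X^{\Sigma'} \times_{X^{\partial\Delta^n}} X,
\]
while the corresponding pushout formula on the algebraic side reads
\[
\qc(X)\otimes\Sigma \;\simeq\; (\qc(X)\otimes\Sigma')\otimes_{\qc(X)\otimes\partial\Delta^n} \qc(X).
\]

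I would then run the induction on the number of non-degenerate simplices of $\Sigma$, with base case $\Sigma=\{*\}$ giving $X^{\{*\}}\simeq X$ and $\qc(X)\otimes\{*\}\simeq \qc(X)$. In the inductive step, properties (5) and (6) in the list of perfect stacks guarantee that $X^{\Sigma'}$, $X^{\partial\Delta^n}$, and $X^\Sigma$ are all perfect, so Theorem~\ref{stating main result}(1) applies to the fiber product above and gives
\[
\qc(X^\Sigma) \;\simeq\; \qc(X^{\Sigma'})\otimes_{\qc(X^{\partial\Delta^n})} \qc(X).
\]
The inductive hypothesis identifies the two tensor factors with $\qc(X)\otimes\Sigma'$ and $\qc(X)\otimes\partial\Delta^n$, and substituting into the algebraic pushout formula above produces the desired equivalence $\qc(X^\Sigma)\simeq \qc(X)\otimes\Sigma$.

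The main obstacle is bookkeeping: ensuring that every stack appearing along the induction remains perfect so that Theorem~\ref{stating main result} is applicable at each stage. This is precisely the role of properties (5) and (6) of perfect stacks, which is why they were established in advance. A cleaner conceptual reformulation, which avoids writing out the induction explicitly, is to note that both $\Sigma\mapsto\qc(X^\Sigma)$ and $\Sigma\mapsto\qc(X)\otimes\Sigma$ are colimit-preserving functors from finite simplicial sets into commutative algebras in $\mc Pr^{\rm L}$ agreeing on the point; by the universal property of spaces as the free cocompletion of the point, any two such functors are canonically equivalent, and the inductive argument sketched above is in effect the verification of the colimit-preservation hypothesis for the geometric side.
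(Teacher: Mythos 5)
Your proposal is correct and follows essentially the same route as the paper: the paper's proof is exactly this induction on the simplices of $\Sigma$, writing $X^\Sigma$ as an iterated fiber product of copies of $X$ over perfect stacks (using Proposition~\ref{Thomason product} and Corollary~\ref{cor finite mapping stacks have air}) and applying Theorem~\ref{Thomason tensor} at each stage, with the algebraic side matching because coproducts of commutative algebra objects in $\mc Pr^{\rm L}$ are computed by the tensor product. One small bookkeeping caveat: induct over skeleta (i.e.\ on dimension, then on the number of top-dimensional nondegenerate simplices) rather than on the raw count of nondegenerate simplices, since an attaching map $\partial\Delta^n\to\Sigma'$ may collapse cells and $\partial\Delta^n$ can have more nondegenerate simplices than $\Sigma$ itself, so the inductive hypothesis as you stated it would not directly apply to the factor $\qc(X^{\partial\Delta^n})$.
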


One can view this corollary as providing for a TFT over finite
simplicial sets.  We assign to such a simplicial set $U$ the
$\oo$-category $\qc(X^U)$, and for any
diagram of finite simplicial sets
$$ U\rightarrow \Sigma \leftarrow V,
$$ (for example a cobordism of manifolds) we obtain a correspondence
of mapping stacks
$$X^U \leftarrow X^\Sigma \rightarrow X^V,$$ 
and hence by pullback and pushforward a
functor 
$$\qc(X^U)\to \qc(X^V),
$$ satisfying composition laws with respect to gluings.

 In particular, since $\qc(X^{S^n})$
is equivalent to
the $\cE_n$-center $\cZ_{\cE_n}(\qc(X))$, we
obtain an action of the (framed) $\cE_{n+1}$-operad on
 $\cZ_{\cE_n}(\qc(X))$. This establishes the categorified (cyclic)
Kontsevich conjecture in the current geometric setting.

\subsubsection{Extended TFT}
An exciting recent development (postdating the submission of this
paper) is Jacob Lurie's announced proof of the Cobordism Hypothesis
\cite{jacob TFT} which characterizes extended TFTs
in arbitrary dimensions. The results of this paper may be used to
prove that the monoidal $\oo$-category $\qc(X)$, for a perfect stack $X$, defines an extended two-dimensional TFT. We briefly summarize this here (see \cite{character} for more
details regarding an analogous extended two-dimensional TFT).

The extended two-dimensional TFT $\cZ_X$ associated to a perfect
stack $X$ is a symmetric monoidal functor
$$ \cZ_X: 2Bord\to 2Alg
$$ from 
the $(\oo,2)$-category of unoriented bordisms:
\begin{enumerate}
\item[$\bullet$] objects: $0$-manifolds,
\item[$\bullet$] 1-morphisms: $1$-dimensional bordisms between $0$-manifolds,
\item[$\bullet$] 2-morphisms: classiying spaces of $2$-dimensional bordisms between $1$-bordisms,
\end{enumerate}
to the Morita $(\infty,2)$-category of algebras $2Alg$: 
\begin{enumerate}
\item[$\bullet$] objects: algebra objects in stable presentable $\oo$-categories,
\item[$\bullet$] 1-morphisms: bimodule objects in stable presentable $\oo$-categories,
\item[$\bullet$] 2-morphisms: classifying spaces of morphisms of bimodules.
\end{enumerate}

Note that given $A\in 2Alg$, we can pass to the $(\oo,2)$-category of
modules $\Mod_A$, and bimodules correspond to functors between
$(\oo,2)$-categories of modules.  Thus if a field theory assigns $A$ to a point,
we can also think of it as assigning $\Mod_A$ to a point.

The field theory $\cZ_X$ assigns the following to closed $0$, $1$ and
$2$-manifolds:
\begin{enumerate}
\item[$\bullet$] To a point, $\cZ_X$ assigns the monoidal
  $\oo$-category $\qc(X)$, or equivalently, the $(\oo,2)$-category of
  $\qc(X)$-linear $\oo$-categories.

\item[$\bullet$] To a circle, $\cZ_X$ assigns the Hochschild homology category
  of $\qc(X)$, which by our results can be identified with $\qc(\cL X)$.

\item[$\bullet$] To a closed surface $\Sigma$, $\cZ_X$ assigns the
  $k$-module of derived global sections $\Gamma(X^\Sigma, \cO_{X^\Sigma})$
  of the structure sheaf of the mapping space $X^\Sigma$.
\end{enumerate}

The proof that $\qc(X)$ satisfies the dualizability conditions of \cite{jacob TFT}
to define an extended TFT follows closely from the identification of
the Hochschild homology and cohomology categories of $\qc(X)$ (which is a
necessary consequence of the TFT structure). One could consult~\cite{character} for
more details in an analogous setting.


\subsubsection{Rozansky-Witten theory}
The topological field theory $\cZ_X$ introduced above is closely
related to well-known three-dimensional topological field theories.
When the target $X$ is the classifying stack $BG$ of a finite group,
$\cZ_X$ is the untwisted version of Dijkgraaf-Witten theory. More
generally, if $X$ is a 2-gerbe ($BB{\mathbb G}_m$-torsor) over $BG$
(classified by a class in $H^3(G,{\mathbb G}_m)$), we obtain the
twisted version (as studied in \cite{Freed}).

When $X$ is a smooth complex projective variety, $\cZ_X$ is closely
related to the $\ZZ/2$-graded three-dimensional topological field
theory associated to the holomorphic symplectic manifold $T^*X$ by
Rozansky-Witten \cite{RozW}, Kontsevich \cite{Kont RW} and Kapranov
\cite{Kap RW}. In particular see \cite{RW} and \cite{KRS} for work on
Rozansky-Witten theory as an extended topological field theory.

We confine ourselves here to a brief comparison of the two theories on the
circle. On the one hand, $\cZ_X$ assigns to $S^1$ the stable $\oo$-category
$\qc(\cL X)$.  Since $X$ is a smooth scheme, $\cL X$ is the total
space $T_X[-1] =\Spec \on{Sym}^\bullet \Omega_X[1]$ of the shifted tangent
bundle of $X$.  Thus we can identify $\qc(\cL X)$ with module objects in the
$\oo$-category $\qc(X)$ for the commutative algebra object $\on{Sym}^\bullet
\Omega_X[1]$. Via Koszul duality, this category is closely related to
modules for $\on{Sym}^\bullet T_X[-2]$, or in other words, sheaves on $T^*X$ with an
unusual grading. On the other hand, Rozansky-Witten theory assigns to
$S^1$ the $\oo$-category $\Perf(T^*X)$ of perfect complexes.  It would
be very interesting to develop Rozansky-Witten theory as a fully
extended TFT using the results of \cite{jacob TFT}, and explore its
relation with the derived algebraic geometry of $\qc(X)$.


\subsection{Acknowledgements} This paper relies heavily on the work of Jacob Lurie, whom the authors thank for many
helpful discussions and for his generosity in sharing his knowledge
(and in particular for the suggestion of how to describe the higher
$\cE_n$-centers of $\qc(X)$ geometrically).  DBZ and DN would like to
thank Bertrand To\"en for educating them in derived algebraic geometry
and patiently explaining many arguments.  They would also like to
thank Victor Ostrik for sharing his understanding of Drinfeld centers
and their applications to Hecke categories. JF thanks his advisor,
Michael Hopkins, for discussions on the topological field theory
aspects of this paper, as well as for his support and guidance. Many
thanks to Andrew Blumberg and Amnon Neeman for detailed comments on
early drafts. Finally, we would also like to thank the anonymous referees for their valuable feedback.

DBZ is partially supported by NSF CAREER grant
DMS-0449830.  JF was supported by an NSF Graduate Research
Fellowship. DN is partially supported by NSF grant DMS-0600909 and a
Sloan Research Fellowship. Work on this paper has taken place at the
Aspen Center for Physics and the IAS (supported by NSF grant
DMS-0635607), whom the authors thank for providing support and
stimulating environments.



\section{Preliminaries}\label{prelim}

In this section, we summarize relevant technical foundations 
in the theory of $\oo$-categories and derived algebraic geometry.
It is impossible to describe this entire edifice in a few pages,
so we will aim to highlight the particular concepts, results and their references
that will play a role in what follows.


\subsection{$\infty$-categories}
We are interested in studying algebraic operations on categories of a
homotopy-theoretic nature such as derived categories of sheaves or spectra. It
is well established that the theory of triangulated categories,
through which derived categories are usually viewed, is inadequate
to handle many basic algebraic and geometric operations. Examples include 
the absence of a good theory of gluing or of descent, of functor categories, 
or of generators and relations.
The essential problem is that 
passing to homotopy categories discards essential information (in particular,
homotopy coherent structures, homotopy limits and homotopy colimits).

This information can be captured in many alternative ways, the most common 
of which is the theory of model categories. Model structures keep weakly equivalent objects distinct but retain the extra
structure of resolutions which enables the formulation
of homotopy coherence. This extra structure can be very useful for calculations but makes 
some functorial operations difficult. In particular, it can be hard to 
construct certain derived functors because the given resolutions are inadequate.
There are also fundamental difficulties with the consideration of functor categories between model categories. 
However, much of the essential information encoded in model categories 
can be captured by the Dwyer-Kan simplicial localization. This construction uses the weak equivalences 
to construct {\em simplicial sets} (or alternatively, topological spaces) of maps between objects,
refining the sets of morphisms in the underlying homotopy category (which are recovered by passing to
$\pi_0$).

This intermediate regime between model categories and
homotopy categories is encoded by the theory of $(\infty,1)$-{\em categories}, or simply
$\infty$-categories. 
The notion of $\infty$-category captures (roughly speaking) the notion of a category
whose morphisms form topological spaces and whose compositions and
associativity properties are defined up to coherent homotopies. 
Thus an important distinction between $\infty$-categories and model categories or homotopy categories is that
coherent homotopies are naturally built in to all
the definitions. Thus for example all functors are naturally derived and 
the natural notions of limits and colimits in the $\infty$-categorical
context correspond to {\em homotopy} limits and colimits in more traditional formulations.

The theory of $\infty$-categories has many alternative formulations (as
topological categories, Segal categories, quasi-categories, etc; see
\cite{Bergner} for a comparison between the different
versions). We will follow the conventions
of \cite{topos}, which is based on Joyal's quasi-categories \cite{Joyal}. Namely, an $\infty$-category 
is a simplicial set, satisfying a weak version of the Kan condition guaranteeing the fillability of certain
horns. The underlying simplicial set plays the role of the set of objects while the fillable horns correspond
to sequences of composable morphisms. The book \cite{topos} presents a detailed study of $\infty$-categories,
developing analogues of many of the common notions of category theory (an overview of the $\infty$-categorical language,
including limits and colimits, appears in \cite[Chapter 1.2]{topos}). 

Among the structures we will depend on are the $\infty$-category of $\infty$-categories  \cite[3]{topos},
adjoint functors \cite[5.2]{topos}, and ind-categories and compact objects \cite[5.3]{topos} (see also Section \ref{air section}). 
Most of the objects we encounter form {\em presentable} $\infty$-categories \cite[5.5]{topos}.
Presentable $\infty$-categories are $\infty$-categories
which are closed under all (small) colimits (as well as limits, \cite[Proposition 5.5.2.4]{topos}), 
and moreover are generated in a weak sense
by a small category. In particular, by a result of Simpson \cite[Theorem 5.5.1.1]{topos}, they are
given by suitable localizations of $\infty$-categories of presheaves on a small $\infty$-category.
Presentable $\infty$-categories form an $\infty$-category ${\mc Pr}^{\rm L}$ 
whose morphisms are {\em continuous} functors,
that is, functors that preserve all colimits \cite[5.5.3]{topos}.
Note that since presentable categories are closed under all (co)products, categories with a finiteness condition
(like compact spaces, coherent sheaves, etc.) do not fall under this rubric.
A typical example is the $\infty$-category of spaces.

Algebra (and algebraic geometry, see below) in 
the $\infty$-categorical (or derived) setting has been developed in recent years by
To\"en-Vezzosi \cite{HAG1,HAG2} and Lurie \cite{dag1,dag2,dag3}. This has resulted in a very powerful and readily applicable formalism, 
complete with $\infty$-analogues of many of the common tools of ordinary category theory.
We single out two powerful tools that are crucial for this paper and available in the $\infty$-context thanks to \cite{dag2}
(but are not available in a suitable form in the triangulated or model contexts):
\begin{enumerate}

\item[$\bullet$] The general theory of descent, as embodied by the $\infty$-categorical version of the Barr-Beck theorem 
(\cite[Theorem 3.4.5]{dag2}).
\item[$\bullet$] The theory of tensor products of and functor categories between 
presentable $\infty$-categories (reviewed in Section \ref{monoidal infinity}).
\end{enumerate}

\subsubsection{Enhancing triangulated categories}
The $\infty$-categorical analogue of the additive setting of homological algebra
is the setting of stable $\infty$-categories \cite{dag1}. 
A stable $\infty$-category can be defined as an $\infty$-category with a zero-object, closed
under finite limits and colimits, and in which pushouts and pullbacks coincide 
\cite[2,4]{dag1}. The result of \cite[3]{dag1} is that stable categories are enhanced versions
of triangulated categories, in the sense that the homotopy category of a stable
$\infty$-category has the canonical structure of a triangulated category.
We will mostly be concerned with $\infty$-categories that are both presentable and stable,
as studied in \cite[17]{dag1}. Typical examples are the $\infty$-categorical enhancements of the derived
categories of modules over a ring, quasi-coherent sheaves on a scheme, 
and the $\infty$-category of spectra. 

Given a triangulated category which is linear over a ring $k$, we may
consider enhancing its structure in three different ways, promoting it to
\begin{enumerate}
\item[$\bullet$] a differential graded (dg) category,
\item[$\bullet$] an $A_{\infty}$-category, or 
\item[$\bullet$] a stable $\infty$-category.
\end{enumerate}
Among the many excellent references for dg and $A_\infty$-categories,
we recommend the survey \cite{Keller}. Relative a ring of
characteristic zero $k$, all three formalisms become equivalent:
$k$-linear stable $\infty$-categories are equivalent to $k$-linear
pre-triangulated dg categories (that is, those whose homotopy category
is triangulated).  Thus we recommend the reader interested in
characteristic zero applications substitute the term
``pre-triangulated $k$-linear dg category'' for ``stable
$\infty$-category'' throughout the present paper. The distinction
between $k$-linear stable $\oo$-categories, dg- and
$A_\infty$-categories becomes important when considering commutative
algebra away from characteristic zero, for which purpose we will only
consider the former.


\subsection{Monoidal $\infty$-categories}\label{monoidal infinity}
The definitions and results of this paper depend in an essential way
on noncommutative and commutative algebra for $\infty$-categories, 
as developed by Lurie in \cite{dag2} and \cite{dag3}.
We briefly summarize this theory in this section, giving 
detailed references for the benefit of the reader. 

The definition of a monoidal $\infty$-category is given in \cite[1.1]{dag2}.
The homotopy
category of a monoidal $\infty$-category is an ordinary monoidal category.
The $\infty$-categorical notion incorporates not only the naive notion of multiplication and unit on
an $\infty$-category $\cC$ but also all of the higher coherences for associativity, which are packaged in the data
of a fibration over $\Delta^{\rm op}$ whose fiber over $[n]$ is $\cC^{\times n}$. (Alternatively, 
it  can be captured concretely by a bisimplicial set with compatibilities, or as a monoid object 
in $\cC$ \cite[Remark 1.2.15]{dag2}, that is,
 a simplicial object in $\cC$ mimicking the classifying space of a monoid.) 
An algebra object in $\cC$ can then be defined \cite[1.1.14]{dag2} as an appropriate section of this fibration.
The $\infty$-categorical notion
of algebra reduces to the more familiar notions of $A_\infty$ $k$-algebra or $A_\infty$-ring spectrum
when the ambient monoidal $\infty$-category is that of differential graded $k$-modules or that of spectra
\cite[4.3]{dag2}. In other words, it encodes a multiplication associative up to coherent homotopies. 
Likewise, (left) modules over an algebra are defined by the same simplicial diagrams as algebras,
except with an additional marked vertex at which we place the module \cite[2.1]{dag2}. 
There is also a pairing to $\cC$ between left and right modules over an algebra object $A$ in $\cC$, namely, the relative
tensor product $\cdot \ot_A \cdot$ defined by the two-sided bar construction \cite[4.5]{dag2}.
Monoidal $\infty$-categories, algebra objects in a monoidal category, and module objects over an
algebra object themselves form $\infty$-categories, some of whose properties (in particular behavior
of limits and colimits) are worked out in \cite[1,2]{dag2}. In particular, limits of algebra objects
are calculated on the underlying objects \cite[1.5]{dag2} and module categories are stable \cite[Proposition 4.4.3]{dag2}.

The definition of a symmetric monoidal $\infty$-category is given in \cite[1]{dag3},
modeled on the Segal machine for infinite loop spaces. Namely, we replace $\Delta^{\rm op}$ 
in the definition of monoidal $\infty$-categories by the category of pointed finite sets,
thus encoding all the higher compatibilities of commutativity. Likewise, commutative algebra
objects are defined as suitable sections of the defining fibration. On the level of homotopy categories,
we recover the notion of commutative algebra object in a symmetric monoidal category,
but on the level of chain complexes or spectra this notion generalizes the notion of $\cE_\infty$-algebra
or $\cE_\infty$-ring spectrum (as developed in \cite{EKMM,HSS}). An important feature of the $\infty$-category of commutative
algebra objects is that coproducts of commutative algebra objects are calculated by
the underlying monoidal structure \cite[Proposition 4.7]{dag3}.
Section \cite[5]{dag3} introduces commutative modules
over commutative algebra objects $A$, which are identified with both left and right modules
over the underlying algebras. 
The key feature of the $\infty$-category
of $A$-modules is that it has a canonical symmetric monoidal structure \cite[Proposition 5.7]{dag3}, 
extending the relative tensor product of modules. Moreover, commutative algebras
for this structure are simply commutative algebras over $A$ \cite[Proposition 5.9]{dag3}.

One of the key developments of \cite{dag2} is the $\infty$-categorical version
of tensor products of abelian categories \cite{Deligne}.
Namely, in \cite[4.1]{dag2} it is shown that the $\infty$-category ${\mc Pr}^{\rm L}$ of 
presentable $\infty$-categories has a natural monoidal structure. In this structure,
the tensor product $\cC\ot\cD$ of presentable $\cC,\cD$ is a recipient of a universal functor from 
the Cartesian product $\cC\times \cD$ which is ``bilinear" (commutes with colimits in each variable separately). 
Moreover, \cite[Proposition 6.18]{dag3} lifts this to a symmetric monoidal structure in which
the unit object is the $\infty$-category of spaces. 
This structure is in fact closed, in the sense that ${\mc Pr}^{\rm L}$ has an internal hom functor
compatible with the tensor structure, see \cite[Remark 5.5.3.9]{topos} and \cite[Remark 4.1.6]{dag2}. 
The internal hom assigns to presentable $\infty$-categories
$\cC$ and $\cD$ the $\infty$-category of colimit-preserving functors $\Fun^{\rm L}(\cC,\cD)$, which is
presentable by \cite[Proposition 5.5.3.8]{topos}.
In Section \ref{defining centers} below,
we use the monoidal structure on ${\mc Pr}^{\rm L}$ to define 
an analogue for $\infty$-categories of the Hochschild cohomology of algebras or topological Hochschild cohomology
of ring spectra.

The symmetric monoidal structure on the $\infty$-category $\mc Pr^{\rm L}$ of
presentable $\infty$-categories restricts
to one on the full $\infty$-subcategory of stable presentable $\infty$-categories (\cite[4.2]{dag2} and \cite[6.22]{dag3}).
The unit of the restricted monoidal structure is the stable category of spectra. In particular, this induces a symmetric
monoidal structure on spectra and exhibits presentable stable categories as tensored over
spectra.
Thus if $\cC$ is a symmetric monoidal stable $\infty$-category (that is, a stable commutative ring object
in stable $\infty$-categories), we may consider module categories over $\cC$. These modules themselves
will form a symmetric monoidal $\infty$-category under the operation $\ot_\cC$, which is characterized
by the two-sided bar construction with respect to $\cC$. In our applications, $\cC$ will be 
the $\infty$-category $\qc(Y)$ of quasi-coherent sheaves on a derived stack $Y$, and we will consider the tensor products of 
module categories of the form $\qc(X)$ for derived stacks $X\to Y$. 

In Section \ref{defining centers}, we use this general formalism to define the center (or Hochschild cohomology)
and universal trace (or Hochschild homology) for algebra objects in any symmetric monoidal $\infty$-category.
The case of spectra recovers topological Hochschild (co)homology, while the case of
presentable $\oo$-categories ${\mc Pr}^{\rm L}$ 
provides a derived generalization of the Drinfeld center and will be the focus of our applications in Section \ref{applications}. 

In Section \ref{tensors of cats}, we discuss 
basic properties of $\infty$-categories of modules,
and  the tensor product of {\em small} stable $\infty$-categories.


\subsection{Derived algebraic geometry}\label{derived AG}
Algebraic geometry provides a wealth of examples of symmetric
monoidal categories, as well as powerful tools to study such 
categories. To a scheme or stack, we may assign its
category of quasi-coherent sheaves with its commutative multiplication given
by tensor product. This construction generalizes the
category of $R$-modules for $R$ a commutative ring (the case of
$\on{Spec} R$), and the category of representations of an algebraic
group $G$ (the case of classifying stacks $BG$). Conversely, Tannakian formalism
often allows us to reverse this process and 
assign a stack to a symmetric monoidal category.

Schemes and stacks are also natural sources of 
symmetric monoidal $\infty$-categories, which refine the familiar 
symmetric monoidal derived categories of quasi-coherent sheaves.
For example, to any stack in characteristic zero, we can assign the differential graded enhancement of its
derived category, constructed by taking the differential graded category of complexes
with quasi-coherent cohomology and localizing the quasi-isomorphisms
(following \cite{Keller, Drinfeld, Toen dg}). 
In general, we can consider the stable  symmetric monoidal
$\infty$-category
of quasi-coherent sheaves on any stack whose homotopy category
is the familiar derived category.

Trying to geometrically describe algebraic operations on
$\infty$-categories of quasi-coherent sheaves quickly takes us from ordinary algebraic geometry to
an enhanced version which has been developed over the last few
years known as derived algebraic geometry~\cite{dag, dag1, dag2, dag3, HAG1, HAG2}. In fact, derived
algebraic geometry also provides a far greater abundance of examples
of  stable symmetric monoidal $\oo$-categories. Thus it is most natural to both ask
and answer algebraic questions about  stable symmetric monoidal $\oo$-categories
 in this context.

Derived algebraic geometry generalizes the world of schemes
simultaneously in two directions. Regarding schemes in terms of
their functors of points, which are functors from rings to sets
(satisfying a sheaf axiom with respect to a Grothendieck topology),
we want to replace both the source and the target categories by
suitable $\infty$-categories. First, we may consider functors from
rings to spaces (or equivalently, simplicial sets) considered as an
$\infty$-category (with weak homotopy equivalences inverted). If we
consider only $1$-truncated spaces, or equivalently their
fundamental groupoids, we recover the theory of stacks. This
naturally leads to the introduction of higher stacks where we
consider sheaves of (not necessarily $1$-truncated) spaces on the
category of rings. For example, we can take any space
and consider it as a higher stack by taking (the sheafification
of) the corresponding constant functor on rings.

To pass from higher stacks to derived stacks, we replace the source
category of commutative rings by an $\infty$-category of commutative ring 
objects in a symmetric monoidal $\infty$-category.
There are at least three natural candidates that are commonly considered:

\begin{enumerate}
\item connective (that is, homological or non-positively graded)
commutative differential graded $k$-algebras over a commutative ring  $k$  of
characteristic zero;
\item simplicial commutative rings, or simplicial commutative $k$-algebras over a
commutative ring~$k$;
\item connective $\cE_\infty$-ring spectra, or connective $\cE_\infty$-algebras over
the Eilenberg-MacLane spectrum $H(k)$ of a commutative ring $k$.\footnote{Note 
that by the results of \cite{dag3} reviewed above, an $\cE_\infty$-algebra over $H(k)$ is equivalently
a commutative algebra object in the symmetric monoidal $\infty$-category
of $H(k)$-module spectra.}
\end{enumerate}

When $k$ is a $\QQ$-algebra, the $\oo$-categories of connective differential graded
$k$-algebras, simplicial commutative $k$-algebras, and connective
$\cE_\infty$-algebras over $H(k)$ are all equivalent. For a general commutative ring $k$,
simplicial commutative rings provide a setting for importing notions of homotopy theory into algebraic geometry over $k$ (for example, for the aim of describing centers of $\oo$-categories of sheaves on
schemes or stacks). Connective $\cE_\oo$-ring spectra are more subtle and provide the
setting for importing algebro-geometric notions back into stable homotopy theory. General
$\cE_\infty$-ring spectra provide a radical generalization which is at the heart of stable homotopy
theory. (See \cite{shipley} and references therein for the relation of algebra over rings and over the corresponding Eilernberg-MacLane spectra.)

The techniques and results of this paper apply equally in
any of the three settings, and we will refer to any of the three as
{\em commutative derived rings} without further comment.

Roughly speaking, a derived stack is a functor
from the $\oo$-category of commutative derived rings to the $\oo$-category
of topological spaces. It should satisfy a sheaf property with respect to a Grothendieck
topology on commutative derived rings.
Examples of derived stacks include:

\begin{enumerate}
\item spaces (constant functors),
\item ordinary schemes and stacks,
\item the spectrum of a derived commutative ring (the corresponding representable functor),
\item objects obtained by various gluings or quotients (colimits) and intersections or fiber products (limits)
of the above examples.
\end{enumerate}

We now recall the precise definitions of derived stacks, following~\cite{HAG1,HAG2, Toen}. 

The opposite of the $\oo$-category $\Alg_k$ of
derived commutative $k$-algebras admits a Grothendieck topology with respect to \'etale
morphisms. For $A, B\in \Alg_k$, a morphism $A\to B$ is \'etale if 
the induced morphism on connected component $\pi_0(A) \to \pi_0(B)$ is \'etale,
and for $i>0$, the induced map on higher homotopy groups is an isomorphism
$$
\xymatrix{
\pi_i(A) \ot_{\pi_0(A)} \pi_0(B)\ar[r]^-{\sim} & \pi_i(B).
}
$$

A finite family of morphisms $\{f_i : A \to  B_i\}$ is a \'etale covering if each $f_i$ is \'etale and
the induced morphism is surjective
$$
\xymatrix{
\coprod \Spec \pi_0(B_i)\ar@{->>}[r] &  \Spec \pi_0(A).
}
$$
This induces a Grothendieck topology on the (opposite of the) homotopy category of $\Alg_k$.

Now a derived stack $X$ is a covariant functor from $\Alg_k$ 
to the $\oo$-category $\Top$ of topological spaces which is a sheaf with respect to the \'etale topology. In particular, for any \'etale cover $A\to B$, the induced morphism
$$
\xymatrix{
X(A) \ar[r]^-{\sim} & \lim X(B^*)
}$$is an equivalence, where $B^*$ is the standard cosimplicial resolution of $A$ with $i$th simplex $B^{\ot_A i+1}$.

We will work exclusively with derived stacks whose diagonal morphism $\Delta:X\to X\times X$ is
representable and affine.
Given a quasi-compact derived stack $X$ with affine diagonal, we can choose a cover $U\to X$ by an
affine derived scheme, and obtain a \u Cech simplicial affine
derived scheme $U_*\to X$ with $k$-simplices given by the $k$-fold
fiber product $U\times_X\cdots\times_X U$ and whose geometric
realization is equivalent to $X$.


\subsection{Derived loop spaces}
Even if we are interested in studying primarily schemes,
the world of derived stacks is a necessary setting in which to calculate homotopically correct
quotients, fiber products and mapping spaces. 
We illustrate this with an important geometric operation on derived stacks: the formation
of the derived loop space.  This operation is one of our motivations for
considering derived stacks in the first place, since the derived loop 
space of an ordinary scheme or stack is already a nontrivial derived stack.
(See \cite{cyclic} for a different appearance of derived loop spaces in relation to cyclic homology, $\D$-modules and
representation theory.) 

The free loop space of a derived stack $X$ is the internal hom
$\cL X=X^{S^1}=\Map(S^1,X)$
of maps from the
constant stack given by the circle $S^1$. As a derived stack, the loop space may be
described explicitly as the collection of pairs of points in $X$
with two paths between them, or in other words, as the derived
self-intersection of the diagonal
$$\cL X \simeq X\times_{X\times X} X.$$

Let us illustrate this notion in a few examples. 

For $X$ a
topological space (constant stack), $\cL X$ is of course the free
loop space of $X$ (again considered as a constant stack). 

For $X=BG$
the classifying space of an algebraic group, $\cL X=G/G$ is the
adjoint quotient of $G$, or in other words, the adjoint group for
the universal bundle $EG={\rm pt}\to BG$. Note that this agrees with the
underived inertia stack of $BG$. 

For $X$
a smooth variety over a field of characteristic zero, the derived self-intersection of the diagonal can
be calculated by a Koszul complex to be the spectrum of the complex
of differential forms on $X$ (with zero differential) placed in
homological degrees. 
We thus obtain that the
loop space of a smooth variety $X$ can be identified with the
relative spectrum of the symmetric algebra of the shifted tangent bundle
$T_X[-1]$. 

Similarly, for any $n\geq 0$, we may consider the derived stack of
maps $X^{S^n}=\Map(S^n, X)$ from the $n$-sphere $S^n$ into $X$. Concretely, the
presentation of $S^n$ by two cells glued along $S^{n-1}$ leads to an iterated
description of $X^{S^n}$ as the self-intersection
$$X^{S^n}=X
\times_{X^{S^{n-1}}} X
$$
along two copies of constant maps over $X^{S^{n-1}}$.

More generally, for any topological space $\Sigma$ and stack $X$ we can consider the
derived mapping
stack $X^\Sigma =\Map(\Sigma, X)$. 
 As we will demonstrate,
 this construction suggests the world of derived algebraic geometry
is a natural setting to construct topological $\sigma$-models.


\subsection{$\cE_n$-structures}\label{monoidal structures}
When considering monoidal structures on vector spaces, we have the
option of considering associative or commutative multiplications.
When considering monoidal structures on categories, we are faced
with three levels of increasing commutativity: ``plain" monoidal
categories carrying an associative multiplication; braided monoidal
categories, in which there is a functorial isomorphism exchanging
the order of multiplication $A\otimes B\to B\otimes A$ and
satisfying the braid relations; and symmetric monoidal categories,
in which the square of the braiding is the identity. 

In homotopy theory, there is an infinite sequence of types of
algebraic structures interpolating between associativity and
commutativity, modeled on the increasing commutativity of $n$-fold
iterated loop spaces. These algebraic structures can be encoded by the
little $n$-disk or $\cE_n$ operad for $n\geq 1$ or $n=\infty$, which
is an operad in the category of topological spaces.  Recall that an
operad $F$ (in spaces) is a sequence of spaces $F(k)$, which
parametrize $k$-fold multiplication operations in the algebraic
structures we are encoding, together with actions of the symmetric
group permuting the entries and equivariant composition maps. The
$k$-th space of the $\cE_n$ operad parametrizes disjoint collections
of $k$ small balls in the $n$-ball, with the natural
``picture-in-picture" composition maps.  An $\cE_n$-vector space
carries operations labelled by components of the $\cE_n$ operad, and
is an associative algebra for $n=1$ and a commutative algebra for
$n>1$. The notion of $\cE_n$-category for a (usual discrete) category
is sensitive to the fundamental groupoid of the $\cE_n$ operad,
leading to the three different notions of monoidal category ($n=1$),
braided monoidal category ($n=2$) and symmetric monoidal category
($n>2$, since for $n >2$ the spaces in the $\cE_n$ operad are simply
connected).  However, even on the level of graded vector spaces, with
operations labelled by the homology of the $\cE_n$ operad, we obtain
different notions for every $n$, with $n=2$ giving the notion of a
Gerstenhaber algebra familiar from the study of Hochschild cohomology.

We have already encountered the notions of algebra object
(corresponding to the case $\cE_1$, or equivalently $A_\infty$) and
commutative algebra object (corresponding to $\cE_\infty$) in the
context of a symmetric monoidal $\infty$-category, such as dg modules
over a ring $R$, spectra or presentable $\infty$-categories.  In
\cite{thesis}, the general theory of algebras over operads in
$\infty$-categories is developed and applied to algebra and geometry
in the $\cE_n$ case. Roughly speaking, in the $\infty$-categorical
context, we consider the operadic operations and compositions in a
homotopy coherent fashion (see Section~\ref{sect higher centers} for
more details).  Thus for example, an $\cE_2$-category is a
homotopy-theoretic analogue of a braided monoidal category.

The notions of $\cE_n$-algebras and categories pervade homotopy
theory, but have also become prominent in algebra and topological
field theory. We briefly mention two such applications.

Deligne's Hochschild cohomology conjecture asserts that the Hochschild
cochain complex of an associative algebra is an $\cE_2$-algebra,
lifting the Gerstenhaber algebra structure on Hochschild cohomology.
Kontsevich's conjecture generalizes this to assert that the
Hochschild cohomology of an $\cE_n$-algebra is an $\cE_{n+1}$-algebra.

The space of states associated to an $n-1$-sphere by an
$n$-dimensional topological field theory has a natural $\cE_n$-structure, given by tree-level field theory operations, independent of
where the field theory takes its values (vector spaces, chain
complexes, categories, etc.)

\begin{remark}
After the completion of this paper, the paper \cite{dag3} was revised
to include a thorough treatment of $\oo$-categorical operads and their
algebras. Furthermore, the paper \cite{dag6} studies in great detail the
specific case of the $\cE_n$-operads (including a proof of general
versions of the Deligne-Kontsevich conjecture). We refer the reader to
these preprints for details on these topics.
\end{remark}



\section{Perfect Stacks}\label{perfect}

By an $\oo$-category, we will always mean an $(\infty,1)$-category without
further comment, and refer to Section \ref{prelim} for an overview of the required
aspects of the general theory. 
For the reader accustomed to working with model categories, it is important to note
that in an $\oo$-category, all tensors, homs, limits, colimits, and other usual operations  are taken
in a derived or homotopical sense. In other words,
coherent homotopies are automatically built in to all definitions, 
though it may not explicitly appear in the notation. For example, a colimit in
the $\oo$-category obtained as the localization of a model category corresponds
to a homotopy colimit in the original model category.

\medskip

Throughout the rest of the paper, we fix a derived commutative ring
$k$ (in any of the three senses discussed in Section \ref{derived AG})
and work relative to $k$. We use the phrase derived commutative
$k$-algebra to refer to a commutative algebra object in $k$-modules.

For $k$ a commutative $\QQ$-algebra, one can replace $k$-modules with
chain complexes over $k$, derived commutative $k$-algebras with
commutative differential graded $k$-algebras, and stable $k$-linear
$\oo$-categories with pre-triangulated $k$-linear differential graded
categories.

\medskip

%

\medskip


In Section \ref{air section}, we review various notions of when
an $\oo$-category is finitely generated, and introduce the class of perfect stacks to
which our results apply. In Section \ref{base change}, we
establish base change and the projection formula for perfect morphisms. 
In Section
\ref{sect properties of air}, we show that many common classes of
stacks are perfect.


\subsection{Definition of perfect stacks} \label{air section}

Our main objects of study are $\infty$-categories of quasi-coherent
sheaves on derived stacks. For the foundations of derived stacks and
quasi-coherent sheaves on them, we refer the reader
to~\cite{HAG1,HAG2, Toen}. (Note that a theory of descent for sheaves
on higher stacks was developed previously by Hirschowitz and Simpson
\cite{hirschowitzsimpson}.)

Given a derived stack $X$, we have the stable symmetric monoidal
$\oo$-category $\qc(X)$ of quasi-coherent sheaves on $X$. To recall
its construction, consider first a derived commutative $k$-algebra
$A$, and the representable affine derived scheme $X=\Spec A$. In this
case, one defines $\qc(X)$ to be the $\infty$-category of $A$-modules
$\Mod_A$ (i.e., module objects over $A$ in $k$-modules). Its homotopy
category is the unbounded derived category of $A$.

In general, any derived stack $X$ can be written as a
colimit of a diagram of affine derived schemes $X\simeq \colim_{U\in\Aff_{/X}}U$. Then
one defines $\qc(X)$ to be the limit (in the $\oo$-category of $\oo$-categories)
of the corresponding diagram
of $\infty$-categories $$\qc(X):= \lim_{U\in\Aff{/X}}\qc(U).
$$ 
One can think of an object
$F\in\qc(X)$ as collections of quasi-coherent sheaves $F|_U$ on the
terms $U$ together with compatible identifications between their
pullbacks under the diagram maps.

When $X$ is quasi-compact and has affine diagonal, by choosing an
affine cover $U\to X$ with induced \u Cech simplicial affine derived
scheme $U_*\to X$, we can realize $\qc(X)$ by a smaller limit, the
totalization of the cosimplicial diagram~$\qc(U_*)$.

\medskip

An important feature of the $\infty$-category $\qc(X)$ is that it is
cocomplete, that is, closed under all small colimits (or equivalently,
since $\qc(X)$ is stable, all small coproducts). Nevertheless, it can
be difficult to control $\qc(X)$ algebraically via reasonable
generators. In general, it is convenient (and sometimes indispensable)
to work with $\infty$-categories that are ``generated by finite
objects" in a suitable sense. Let us summarize well known approaches
to this idea.  In a moment, we will provide a more detailed
discussion.

There are two common notions of when
a small $\infty$-subcategory $\cC^{\circ}$
generates an $\infty$-category $\cC$.
On the one hand, we could ask that $\cC$ be the inductive limit  $\Ind \cC^\circ$.
On the 
other hand, we could ask
that in $\cC$  the 
right orthogonal of $\cC^{\circ}$ vanishes.

There are three common notions of when an object should be considered finite:
 perfect objects, dualizable objects, and compact objects, which refer
respectively to the geometry, monoidal structure, and categorical structure of $\qc(X)$.

We now introduce the class of perfect stacks. We will check below that for perfect stacks,
the above notions of generators and finite objects all coincide.

\begin{definition}\label{perfect def}
Let $A$ be a derived commutative ring. An $A$-module $M$ is perfect if lies in the smallest $\oo$-subcategory of $\Mod_A$ containing $A$ and closed under finite colimits and retracts. For a derived stack $X$, $\oo$-category $\Perf(X)$ is the full $\oo$-subcategory of $\qc(X)$ consisting of those sheaves $M$ whose restriction $f^* M$ to any affine $f: U\ra X$ over $X$ is a perfect module. 
\end{definition}

\begin{definition}
A derived stack $X$ is said to be perfect if
it has affine diagonal and the $\infty$-category $\qc(X)$ is the inductive limit
$$
\qc(X) \simeq \Ind\Perf(X)
$$
of the full $\infty$-subcategory $\Perf(X)$ of perfect complexes.

A morphism $X\to Y$
is said to be perfect if its fibers $X\ti_Y U$ over affines $U\to Y$ are perfect.
\end{definition}

See \cite[5.3.5]{topos} 
for the construction of Ind-categories of $\infty$-categories, and \cite[8]{dag1} where it is shown that
Ind-categories of stable 
$\infty$-categories are stable.
Let us mention that in the Ind-category $\Ind\cC$ of an $\oo$-category $\cC$,
morphisms between Ind-objects can be calculated via the expected formula
$$
\Hom_{\Ind\cC}
(\ind [X_i], 
(\ind [Y_j])
\simeq
\lim
\colim
\Hom_{\cC}(X_i, Y_j).
$$
For the reader unaccustomed to Ind-categories,
 we will momentarily give an alternative formulation of perfect stack
in the more familiar language of compactly generated categories.

We next proceed with a review of the various notions of generators and finite objects.
In Section~\ref{base change}, we show that perfect morphisms satisfy base
change and the projection formula. In Section \ref{sect
properties of air}, we show that the class of perfect stacks includes
many common examples of interest, and
 check that any morphism between perfect stacks is itself perfect.


\subsubsection{Finite objects}
We review here the three common notions of finite objects and their interrelations
(See \cite[17]{dag1} and \cite[4.7]{dag2}
for more details, as well as~\cite{BvdB, HPS, Keller, LMS} among many other sources).
We remind the reader that we are working in the context of $\oo$-categories,
so constructions such as colimits correspond to {homotopy} colimits in the context of model categories.

\begin{definition}\

\begin{enumerate}
\item An object $M$ of a stable $\oo$-category $\cC$ is said to be
{\em compact} if $\Hom_\cC(M,-)$ commutes with all coproducts
(equivalently, with all colimits).

\item An object $M$ of a stable symmetric monoidal $\oo$-category $\cC$ is said
to be (strongly) {\em dualizable} if there is an object $M^\vee$ and
unit and trace maps
$$
\xymatrix{ 1  \ar[r]^-{u} & M\ot M^\vee
 \ar[r]^-{\tau} & 1
}
$$ such that the composite map
$$
\xymatrix{ M  \ar[r]^-{u \ot \on{id}} & M\ot M^\vee\ot M
\ar[r]^-{\on{id}\ot\tau}  & M }
$$ 
is the identity.

\end{enumerate}
\end{definition}

Suppose $\cC$ is a stable presentable $\oo$-category (such as
$\qc(X)$).  Then an object $M\in \cC$ is compact if and only if maps
from $M$ to any small coproduct factor through a finite coproduct.
Furthermore, a functor $F:\cC\to \cD$ between stable presentable
$\oo$-categories that preserves finite colimits preserves small
colimits if and only if it preserves small coproducts.
(See~\cite[Proposition 17.1]{dag1}.)

In a closed symmetric monoidal 
$\infty$-category $\cC$ (such as $\qc(X)$),
an object $M\in \cC$
is dualizable if and only if there exists a coevaluation map
$$
1\to M\ot \intHom(M,1)
$$
satisfying the appropriate conditions (since one already has an
evaluation map). If an object $M\in \cC$ is dualizable, then we can
turn internal Hom from $M$ into tensor product with $M^\vee$ in the
sense that there is a canonical equivalence
$$
\intHom(M,-)\simeq M^\vee \ot (-).
$$
In particular, this implies that $\intHom(M,-)$ preserves colimits
and $M\ot-$ preserves limits:
$$M\otimes\lim N_\alpha \simeq \Hom_\cC( 1_\cC, M \otimes \lim N_\alpha)
\simeq \Hom_\cC(M^\vee, \lim N_\alpha)\simeq \lim \Hom_\cC(M^\vee,
N_\alpha) \simeq \lim M\otimes N_\alpha.
$$

It is enlightening to note the following characterization of
dualizable objects, which parallels the definition of compact objects
(but will not be used in this paper).

\begin{lemma} Let $\cC$ be a symmetric monoidal presentable stable
  $\oo$-category, whose monoidal structure distributes over
  colimits. An object $M$ of $\cC$ is then dualizable if and only if
  tensoring with $M$ preserves all limits.
\end{lemma}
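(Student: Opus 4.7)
The plan is to address the two directions separately. The forward direction is essentially contained in the computation displayed just above the lemma, while the converse is the main content.

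For the forward direction, assume $M$ is dualizable with dual $M^\vee$. Since $(M^\vee)^\vee\simeq M$, the identification $\intHom(N,-)\simeq N^\vee\otimes(-)$ recorded earlier, applied to $N=M^\vee$, furnishes a natural equivalence $M\otimes(-)\simeq\intHom(M^\vee,-)$. Internal Hom is a right adjoint, so it preserves all small limits. Equivalently, the chain of natural equivalences
$$
\Map_\cC(Y, M\otimes\lim N_\alpha)\simeq \Map_\cC(Y\otimes M^\vee,\lim N_\alpha)\simeq \lim \Map_\cC(Y\otimes M^\vee, N_\alpha)\simeq \lim\Map_\cC(Y, M\otimes N_\alpha),
$$
together with the Yoneda lemma, yields $M\otimes\lim N_\alpha\simeq\lim M\otimes N_\alpha$.

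For the converse, suppose $M\otimes(-)$ preserves all small limits. Since the monoidal structure distributes over colimits, $M\otimes(-)$ also preserves all small colimits; hence by the adjoint functor theorem for presentable $\oo$-categories (\cite[Corollary 5.5.2.9]{topos}) it admits a left adjoint $L:\cC\to\cC$. Set $M^\vee:=L(1)$. The unit of $L\dashv(M\otimes-)$ at the monoidal unit supplies a candidate coevaluation
$$
1\longrightarrow M\otimes L(1)=M\otimes M^\vee,
$$
while the counit at $1$ supplies a map $L(M)\to 1$.

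The crux is to identify $L$ with the functor $M^\vee\otimes(-)$, so that the counit above becomes an honest evaluation $M^\vee\otimes M\to 1$. To this end, the associativity constraint on $\cC$ exhibits $M\otimes(-)$ as a $\cC$-linear endofunctor, i.e., a morphism in $\Mod_\cC({\mc Pr}^{\rm L})$. Since the forgetful functor $\Mod_\cC({\mc Pr}^{\rm L})\to{\mc Pr}^{\rm L}$ creates adjunctions (see \cite[Section 4]{dag2}), the left adjoint $L$ inherits a canonical $\cC$-linear structure $L(X\otimes Y)\simeq L(X)\otimes Y$. Setting $X=1$ gives $L(Y)\simeq M^\vee\otimes Y$, so the adjunction $L\dashv(M\otimes-)$ is implemented by left tensor with $M^\vee$. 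The triangle identities of this adjunction, read off at $1$, are precisely the triangle identities that exhibit $M$ as dualizable with dual $M^\vee$.

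The main obstacle is the $\cC$-linearity of $L$; once it is in hand, dualizability is formal. A less conceptual alternative would be to prove directly that the canonical comparison map $\intHom(M,1)\otimes X\to\intHom(M,X)$ is an equivalence for every $X$ (a standard criterion for dualizability), by combining the hypothesis on limit preservation with a presentation of $X$ via suitable generators; but the approach via the module-theoretic adjunction is cleaner.
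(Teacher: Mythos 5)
Your forward direction is correct and is exactly what the paper does (it simply points to the displayed computation preceding the lemma). Your converse also begins the same way as the paper's proof: apply the adjoint functor theorem to $M\ot(-)$, set $M^\vee:=L(1_\cC)$, and try to read the duality data off the adjunction. You are right that the crux is identifying $L$ with $M^\vee\ot(-)$ --- this is precisely the point the paper's one-line conclusion glosses over --- but your justification of that step is not valid. The forgetful functor $\Mod_\cC({\mc Pr}^{\rm L})\to{\mc Pr}^{\rm L}$ does not ``create adjunctions'' in the sense you need: by the mate construction, the left adjoint of a (strongly) $\cC$-linear functor inherits only an \emph{oplax} $\cC$-linear structure, with comparison maps $L(Y\ot X)\to Y\ot L(X)$. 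These maps are equivalences for $Y$ in the colimit-closure of the unit (both sides preserve colimits in $Y$, and the map is an equivalence at $Y=1_\cC$), but nothing in the hypotheses forces $\cC$ to be generated by its unit; strong linearity of $L$ is a projection-formula-type assertion that requires proof.

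Moreover, the step cannot be made formal under the stated hypotheses. Take $\cC=\Fun(\Delta^1,\Mod_k)$ with the pointwise tensor product and $M=(0\to k)$. Then $M\ot(V\to W)\simeq(0\to W)$, so $M\ot(-)$ preserves all limits and colimits; its left adjoint is $L(A\xra{f}B)\simeq(0\to\on{cofib}(f))$, so $M^\vee=L(1_\cC)\simeq 0$ and $L\not\simeq M^\vee\ot(-)$; and $M$ is not dualizable, since for any $N$ the component of a map $1_\cC\to M\ot N$ in the target slot is canonically null, so the composite $M\to M\ot N\ot M\to M$ can never be the identity. Thus the $\cC$-linearity of $L$ genuinely requires an extra hypothesis --- for instance that the unit generates $\cC$ under colimits, as it does for $\Mod_A$ --- and under such a hypothesis your argument does go through, because the oplax maps above become equivalences. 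To be fair, the paper's own proof is no more complete at exactly this point (it reads unit and trace maps off the adjunction without identifying the counit $L(M)\to 1_\cC$ with a pairing $M^\vee\ot M\to 1_\cC$), so you have correctly isolated the real difficulty; but as written neither your module-theoretic argument nor your fallback (showing $\intHom(M,1_\cC)\ot X\to\intHom(M,X)$ is an equivalence by resolving $X$ by generators) closes the gap, since the generators need not be built from the unit.
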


\begin{proof}
  The necessity of $M\ot -$ preserving limits is noted above ($\cC$ is
  closed by virtue of being presentable with monoidal structure
  distributing over colimits, see \cite[Proposition 2.1.12]{dag2}). To
  demonstrate sufficiency, assume that $M\ot -$ preserves limits, and
  then consider the endofunctor of $\cC$ defined by tensoring with
  $M$.  By assumption on $M$ and $\cC$, this functor preserves all
  limits and colimits. We may now apply the adjoint functor theorem of
  \cite{topos} to deduce the existence of a left adjoint $F$ to $M\ot
  -$. Denote by $M^\vee$ the value $F(1_\cC)$ of $F$ applied to the
  unit of $\cC$. The existence of unit and trace maps $1_\cC \ra M\ot
  M^\vee \ra 1_\cC$ is now a particular instance of the unit and
  counit maps for this adjunction, which implies that $M$ and $M^\vee$
  are in duality. Hence $M$ is dualizable.

\end{proof}

In a general stable presentable symmetric monoidal $\oo$-category $\cC$,
the classes of compact and dualizable objects 
do not coincide. In particular, the monoidal unit $1\in \cC$ is always dualizable but
not necessarily compact. 

In the case of a derived stack $X$, the unit $\cO_X \in \qc(X)$ is
compact if and only if the global sections functor $\Gamma(X,-)$
preserves colimits. (This fails if the global sections $\Gamma(X,
\cO_X)$ are too big such as in the following examples: ind-schemes
such as the formal disk $\on{Spf} k[[t]]$; the classifying space of a
topological group such as $BS^1$; the classifying space of finite
groups in modular characteristics.) However, if the unit $\cO_X \in
\qc(X)$ is itself compact then all dualizable objects are compact,
since Hom from a dualizable object $M$ is the composition of the
colimit preserving functors internal Hom $\intHom(M,-)$ and global
sections $\Gamma(X,-)$.

\begin{lemma}[\cite{BoN}, 6.4, \cite{EKMM} III.7.9, \cite{dag2}
  4.7.2]\label{affine air}
For the $\infty$-category $\Mod_k=\qc(\Spec k)$ of modules over a commutative derived ring 
(that is, quasi-coherent sheaves on 
an affine derived scheme), all three notions of finiteness coincide: $M$ compact $\iff$ $M$
dualizable $\iff$  $M$ perfect.
\end{lemma}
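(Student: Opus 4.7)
The plan is to prove the cyclic chain of implications \emph{perfect} $\Rightarrow$ \emph{dualizable} $\Rightarrow$ \emph{compact} $\Rightarrow$ \emph{perfect}, resting on two ingredients: the unit object $k$ is simultaneously perfect, dualizable, and compact, and each of the three classes is closed under finite colimits and retracts.

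The base case for $k$ is immediate: $k$ is perfect tautologically, dualizable via $k^\vee = k$, and compact because $\Hom_{\Mod_k}(k,-)$ is the continuous forgetful functor from $\Mod_k$ to spectra. Closure of \emph{perfect} under finite colimits and retracts is definitional; for \emph{compact}, $\Hom(-,N)$ sends finite colimits to finite limits in a stable $\oo$-category, which commute with filtered colimits, and retracts preserve compactness formally; for \emph{dualizable}, I check that the fiber of duals is dual to the cofiber (using that finite limits and colimits coincide in the stable setting) and that a retract of a dualizable object is dualizable by applying the splitting idempotent through duality. Together these give the implications \emph{perfect} $\Rightarrow$ \emph{compact} and \emph{perfect} $\Rightarrow$ \emph{dualizable} for free.

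For \emph{dualizable} $\Rightarrow$ \emph{compact}, I invoke the identification $\intHom(M,-) \simeq M^\vee \ot -$ for dualizable $M$, already recorded in the excerpt. Tensor product in $\Mod_k$ preserves colimits in each variable, and post-composition with the continuous functor $\Hom_{\Mod_k}(k,-)$ shows that $\Hom_{\Mod_k}(M,-)$ preserves colimits, so $M$ is compact.

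The main step is \emph{compact} $\Rightarrow$ \emph{perfect}. Since $k$ is a compact generator of $\Mod_k$ (its right orthogonal vanishes as $\Hom_{\Mod_k}(k,M) \simeq M$), the machinery of compactly generated stable $\oo$-categories (\cite[5.3.5, 5.5.7]{topos}, \cite[Section~8]{dag1}) yields an equivalence $\Mod_k \simeq \Ind \cC$, where $\cC$ is the full subcategory of compact objects, and $\cC$ is itself idempotent complete. On one hand, $\cC$ contains $k$ and is closed under finite colimits and retracts by the preceding paragraph, so $\Perf(k) \subseteq \cC$. On the other hand, for $M \in \cC$ the equivalence above writes $M \simeq \colim M_\alpha$ as a filtered colimit of perfect objects, and compactness forces $\on{id}_M$ to factor through some $M_\alpha$, exhibiting $M$ as a retract of a perfect complex and therefore as perfect. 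The principal technical obstacle is ensuring that the $\Ind$-completion identification of compactly generated stable $\oo$-categories applies verbatim for arbitrary derived commutative $k$; this is precisely what the cited foundational work guarantees, and modulo that the argument is essentially formal.
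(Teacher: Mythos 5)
Your proposal is correct and follows essentially the same route as the paper: the compactness of the unit $k$ gives dualizable $\Rightarrow$ compact via $\intHom(M,-)\simeq M^\vee\ot(-)$, compact $\Rightarrow$ perfect comes from writing an arbitrary module as a filtered colimit of perfect (finite colimits of free) modules and factoring $\on{id}_M$ through a finite stage, and perfect $\Rightarrow$ dualizable is your closure-under-finite-colimits-and-retracts argument, which is just a repackaging of the paper's ``exhibit the dual explicitly as a finite limit of free modules.'' One small wording caveat: the equivalence $\Mod_k\simeq\Ind(\cC)$ with $\cC$ the compact objects writes $M$ as a filtered colimit of \emph{compact} objects, so to get a filtered colimit of \emph{perfect} objects you should instead argue (as the paper does) that every module is a colimit of free modules, or that $\Ind(\Perf(k))\hookrightarrow\Mod_k$ is essentially surjective because $k$ generates --- a point covered by the references you cite, so this is presentational rather than a gap.
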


\begin{proof} First, note that
the free module $k$, which is the monoidal unit, is clearly compact. Hence 
all dualizable objects are compact.
Moreover, we can write any
object as a colimit of free modules. For $M$ compact, the identity map $\Id_M\in\Hom(M,M)$ has to
factor through a {\em finite} colimit, showing that $M$ is perfect.
Finally, perfect modules are dualizable since we can explicitly exhibit
their dual as a finite limit of free modules. 
\end{proof}

It is useful to note that the notion of dualizable is {\em local}.  On
the one hand, pullback for any map of stacks (for example, restriction
to an affine) preserves dualizable objects.  On the other hand, a dual
object with its unit and trace maps is functorially characterized,
thus if it exists locally, it will glue together to a global object.
This observation leads to the identification of perfect and dualizable
objects in $\qc(X)$ for any $X$:

\begin{prop}\label{perfect=dualizable}
For a derived stack $X$, an object of $\qc(X)$ is dualizable if and
only if it is perfect.
\end{prop}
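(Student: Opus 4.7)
The plan is to use the defining description $\qc(X) \simeq \lim_{U \in \Aff_{/X}} \qc(U)$ as a limit of symmetric monoidal $\oo$-categories, together with Lemma~\ref{affine air}, which identifies perfect, dualizable, and compact objects on an affine derived scheme.

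For the implication dualizable $\Rightarrow$ perfect, the argument is formal. For any $f\colon U \to X$ with $U=\Spec A$ affine, the pullback $f^* \colon \qc(X) \to \qc(U) = \Mod_A$ is symmetric monoidal (by construction of $\qc(X)$ as a symmetric monoidal limit), and symmetric monoidal functors preserve dualizable objects: applying $f^*$ to the unit and trace maps for $M$ yields analogous maps for $f^*M$ with dual $f^*(M^\vee)$, and the triangle identity is preserved. Hence $f^*M$ is a dualizable $A$-module, which by Lemma~\ref{affine air} coincides with being perfect. Since this holds for every affine over $X$, $M$ is perfect by definition.

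For the converse, I would argue that local duals assemble into a global one. Suppose $M$ is perfect, so that $f^*M$ is dualizable on every affine $f\colon U \to X$. The dual $(f^*M)^\vee$ together with its unit and trace data is determined by a universal property, hence unique up to a contractible space of choices. Consequently, for any morphism $g\colon U' \to U$ of affines over $X$, the symmetric monoidal functor $g^*$ produces a canonical identification $g^*(f^*M)^\vee \simeq ((f\circ g)^*M)^\vee$ compatible with unit and trace data. These coherent identifications yield a descent datum, which under the limit presentation $\qc(X) \simeq \lim_U \qc(U)$ descends to a global object $M^\vee \in \qc(X)$ equipped with morphisms $1 \to M \otimes M^\vee$ and $M^\vee \otimes M \to 1$. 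The triangle identity defining dualizability then holds globally because it holds after restriction to every affine, and the collection of pullback functors is jointly conservative.

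The main step to justify is the final one: that dualizability is a property stable under the limit $\qc(X) \simeq \lim_U \qc(U)$ in symmetric monoidal $\oo$-categories. This is not a calculation but a structural observation: duality data is characterized by a universal property in the symmetric monoidal structure, and the functor $\cC \mapsto \cC^{\mathrm{dbl}}$ picking out the full subcategory of dualizable objects preserves limits of symmetric monoidal $\oo$-categories. The real obstacle is purely bookkeeping—keeping the coherences straight as one passes from the collection of local duals to a single global object—and it is resolved cleanly once one works with the $\oo$-categorical universal property rather than with chosen representatives.
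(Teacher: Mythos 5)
Your proposal is correct and follows essentially the same route as the paper: both directions proceed via the limit presentation $\qc(X)\simeq\lim_{U\in\Aff_{/X}}\qc(U)$, using that pullbacks preserve dualizable objects together with the affine case (Lemma~\ref{affine air}) for one direction, and gluing the local duals $(\eta^*M)^\vee$ with their unit/trace data and checking the triangle identities after restriction to affines for the other. Your closing observation that dualizability data is determined up to contractible choice (so that the subcategory of dualizable objects is compatible with limits) is exactly the coherence point the paper handles, only phrased slightly more structurally.
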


\begin{proof}
Let $M\in \qc(X)$ be dualizable with dual $M^\vee$. Then for any map $\eta: \Spec A \ra X$, 
the pullback $\eta^* M$ is dualizable with dual $\eta^* M^\vee$. Dualizable objects of $\Mod_A$ are perfect, hence $\eta^* M$ is perfect and so by definition, $M$ is perfect.

Now suppose $M\in \qc(X)$ is perfect. Recall that by definition, we have  
$$\qc(X) \simeq \lim_{\Spec A\in {\it Aff}/X}\Mod_A.
$$
Since $M$ is perfect, for any map $\eta:\Spec A\to X$, the pullback $\eta^* M$ is perfect, hence dualizable.
We take the value of the dual $M^\vee$ along a map $\eta:\Spec A\to X$ 
to be the dual of the pullback $(\eta^* M)^\vee$. Note that $M^\vee$ is well-defined, since for any composite $\eta\circ\nu: \Spec B \ra X$, there is a natural equivalence 
$(\nu^* \eta^* M)^\vee\simeq \nu^*((\eta^* M)^\vee)$.

To exhibit $M$ and $M^\vee$ as dual to one another, we must construct the requisite unit and counit maps $u: \cO_X \ra M\ot M^\vee$ and $c: M^\vee \ot M \ra \cO_X$. Again using the definition of $\qc(X)$ as a limit, to produce one of these maps, it suffices 
to define analogous maps for the pullbacks under each $\eta: \Spec A \ra X$
which themselves are compatible under pullbacks.
But the existence of such  maps are an immediate consequence of the definition $\eta^*M^\vee = (\eta^* M)^\vee$.
Finally, to verify that the usual composititions $M \ra M\ot M^\vee \ot M \ra M$ and $M^\vee \ra M^\vee \ot M \ot M^\vee \ra M^\vee$ are equivalences, it suffices to check under pullbacks to affines.
But this is a direct consequence of our definition of $M^\vee$ and the fact that pullbacks preserve tensor products. 
\end{proof}


\subsubsection{Generators}\label{generators}
Now we review notions of what it means for compact objects to generate a
stable $\oo$-category.
(See \cite[17]{dag1} for more details,
and \cite[5.5.7]{topos} for the general setting of presentable $\infty$-categories.)

\begin{definition}
A stable category $\cC$ is said to be {\em compactly generated} 
if there is a small $\infty$-category $\cC^\circ$ of compact objects $C_i\in
\cC$ whose right orthogonal vanishes: if $M\in\cC$ satisfies
$\on{\Hom}_\cC(C_i,M)\simeq 0$, for all $i$, then $M\simeq 0$.
\end{definition}

As explained in \cite[Remark 17.3]{dag1}, whether a stable $\oo$-category is compactly generated
 can be studied completely in the underlying homotopy category.
In particular, the notion for stable $\infty$-categories is compatible with that for triangulated categories.

\begin{example}
For a commutative derived ring $k$,
the stable $\oo$-category $\Mod_k$ of $k$-modules is
compactly generated. In fact, it is generated by the free module $k$ itself.
\end{example}

On the one hand, for a stable small $\infty$-category $\cC^{\circ}$, the inductive limit $\cC=\Ind(\cC^{\circ})$ is a compactly generated stable presentable $\infty$-category. 
Furthermore (see~\cite[5.3.4]{topos}), if $\cC^{\circ}$ is closed under finite colimits
and idempotent complete,
then it can be recovered as the compact objects of $\cC$.
In particular, we have that $\cC$ is the Ind-category of its compact objects $\cC^\circ$.

On the other hand,
given a stable $\infty$-category $\cC$
with a  small full $\infty$-subcategory $\cC^\circ$ of compact generators, one can recover
all compact objects of $\cC$ by a result of Neeman \cite{neemanTTY} (see also \cite[Proposition 5.3.4.17]{topos}):
the compact objects are precisely direct summands of the objects of
the smallest stable $\infty$-subcategory $\cC_s$ containing
$\cC^\circ$ (that is, they are direct summands of finite iterated extensions of
objects of $\cC^\circ$). 
In particular, if $\cC^\circ$ is stable and idempotent complete, then it consists
precisely of the compact objects of $\cC$.

If we further assume that $\cC$
is a presentable stable $\infty$-category $\cC$
with a  small full $\infty$-subcategory $\cC^\circ$ of compact generators, then a theorem of 
Schwede and Shipley \cite{schwedeshipley} 
guarantees that we can recover $\cC$ as the cocompletion of $\cC_s$ 
(see \cite[4.4]{dag2} for the $\infty$-categorical
version, and \cite{Keller} for the differential graded version).
In other words, we recover $\cC$ by passing to the category of
colimit preserving $k$-linear functors to $k$-modules
$$
\cC\simeq
\Fun(\cC_s^{\rm op},\Mod_k).
$$

In particular, we now can check that our notion of perfect stack is equivalent
to more familiar assumptions on a symmetric monoidal $\oo$-category.

\begin{prop}
For a derived stack $X$ with affine diagonal, the following are equivalent:
\begin{enumerate}
\item $X$ is perfect.
\item $\qc(X)$ is
compactly generated,
and its compact and dualizable objects coincide.
\end{enumerate}
\end{prop}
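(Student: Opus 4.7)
The plan is to leverage Proposition \ref{perfect=dualizable}, which already identifies dualizable objects with perfect ones in $\qc(X)$ for any derived stack $X$. Under this identification, the content of the proposition is that $\qc(X) \simeq \Ind \Perf(X)$ is equivalent to $\qc(X)$ being compactly generated with $\qc(X)^c = \Perf(X)$. So the work splits into two implications, both of which reduce to standard recognition theorems for Ind-categories of stable $\infty$-categories recalled earlier in Section \ref{generators}.

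For $(1) \Rightarrow (2)$, I would start by noting that $\Perf(X)$, by Definition \ref{perfect def}, is the smallest $\infty$-subcategory of $\qc(X)$ containing pullbacks of $A$ (up to restriction along affine charts) and closed under finite colimits and retracts; in particular $\Perf(X)$ is a stable idempotent complete small $\infty$-subcategory. Hence by the result of \cite[5.3.4]{topos} cited in Section \ref{generators}, the compact objects of $\Ind \Perf(X)$ are exactly $\Perf(X)$. Under the hypothesis $\qc(X) \simeq \Ind \Perf(X)$, we therefore obtain $\qc(X)^c = \Perf(X)$, and by Proposition \ref{perfect=dualizable} this coincides with the dualizable objects. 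Compact generation of $\qc(X)$ follows since morphisms in $\Ind \Perf(X)$ out of a compact object commute with filtered colimits, so an object right orthogonal to all of $\Perf(X)$ is right orthogonal to all filtered colimits of perfect objects, hence to all of $\qc(X)$, hence zero.

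For $(2) \Rightarrow (1)$, assume $\qc(X)$ is compactly generated and that $\qc(X)^c$ coincides with the dualizable objects. By Proposition \ref{perfect=dualizable}, $\qc(X)^c = \Perf(X)$. The compact objects in any stable presentable $\infty$-category form a stable, idempotent complete small subcategory (idempotent completeness since $\qc(X)$ admits all small colimits and compact objects are closed under retracts). I would then invoke the theorem of Schwede--Shipley \cite{schwedeshipley}, in the $\infty$-categorical form of \cite[4.4]{dag2} cited above, which says that a compactly generated stable presentable $\infty$-category is canonically equivalent to the Ind-completion of its full subcategory of compact objects. This gives $\qc(X) \simeq \Ind(\qc(X)^c) = \Ind \Perf(X)$, so $X$ is perfect (the affine diagonal hypothesis is assumed throughout).

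The main point to be careful about is the identification of compact objects of an Ind-category with the original category in the direction $(1)\Rightarrow(2)$: this requires $\Perf(X)$ to be closed under finite colimits and idempotent complete, both of which are built into Definition \ref{perfect def} but are easy to overlook. Everything else is a direct application of results already recalled in Section \ref{generators}, so I expect no serious obstacle beyond careful bookkeeping.
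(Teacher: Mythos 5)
Your proof is correct and follows the same skeleton as the paper's: both directions come down to Proposition \ref{perfect=dualizable} together with the Ind-category recognition facts recalled in Section \ref{generators} (the compact objects of $\Ind\cC^\circ$ are exactly $\cC^\circ$ when $\cC^\circ$ is stable and idempotent complete, and a compactly generated presentable stable $\infty$-category is the Ind-category of its compact objects). The one point where you take a different route is the verification that the subcategory of ``finite'' objects is idempotent complete: the paper shows directly that dualizable objects are closed under retracts by writing down unit and trace maps for a retract $N$ of a dualizable object $M$, whereas you extract closure under finite colimits and retracts from the definition of perfection. That works, but be aware that your paraphrase of Definition \ref{perfect def} is not quite right: for a stack, $\Perf(X)$ is defined by the local condition that each restriction $f^*M$ along an affine $f:U\to X$ is a perfect module, not as the smallest subcategory of $\qc(X)$ generated under finite colimits and retracts --- the latter is only the affine-level definition. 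The closure properties you need do still hold, since $f^*$ preserves finite colimits and retracts and perfect $A$-modules are closed under both, so this is a slip of attribution rather than a gap. Comparing the two, the paper's retract argument has the virtue of working purely monoidally (it applies to dualizable objects with no reference to the geometric definition of perfection), while your version is quicker once one grants that perfection is a local condition stable under finite colimits and retracts.
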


\begin{proof}
If $X$ is perfect, so that $\qc(X)=\Ind \Perf (X)$,
we claim that compact and dualizable objects agree, and hence compact objects generate, so that
(1) implies (2). 

To see the claim, it suffices to show that the full $\oo$-subcategory of dualizable 
objects of $\qc(X)$ is idempotent complete (since it is also stable). 
Since $\qc(X)$ is idempotent complete, this 
is equivalent to showing that dualizable objects 
are closed under retracts. However, for a retract $N$ of a dualizable object $M$ 
one can explicitly write  down the unit and trace maps for $N \ot \intHom(N, \cO_X)$ and 
confirm the necessary conditions. We leave this to the reader.

Conversely, if $\qc(X)$ is compactly generated, then $\qc(X)$ is the Ind-category of its compact
objects, and hence by assumption also the Ind-category of its dualizable objects, so that (2) implies (1). 
\end{proof}




\subsection{Base change and the projection formula}\label{base
change} In this section, we collect properties of the pushforward
along a perfect stack, as summarized in the following proposition:

\begin{prop}\label{Thomason pushforwards} Let $f: X\ra Y$ be a perfect morphism. 
Then $f_*:\qc(X)\to \qc(Y)$ commutes with all small
colimits and satisfies the projection formula. Furthermore, if $g:
Y' \ra Y$ is any map of derived stacks, the resulting base change
map $g^* f_* \ra f'_* g'^*$ is an equivalence.
\end{prop}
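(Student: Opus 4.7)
The plan is to reduce all three assertions to the case where $Y = \Spec A$ is affine (so that $X$ is a perfect stack by the definition of a perfect morphism), and then to deduce everything from the compactness of the structure sheaf $\cO_X$. Each assertion is local on $Y$ in the sense that, by choosing an affine cover $U \to Y$ and testing the relevant equivalences after base change, we reduce to morphisms $X \times_Y U \to U$ into an affine target whose total space is perfect. For the base change claim, a further reduction along an affine Čech cover of $Y'$ lets us additionally assume that $Y'$ is affine, so that $g : Y' \to Y$ is itself an affine morphism.

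With $Y = \Spec A$ affine and $X$ perfect, $\cO_X$ is a perfect object of $\qc(X)$, hence compact by Proposition \ref{perfect=dualizable} together with the coincidence of perfect and compact objects on a perfect stack. Consequently $\Gamma(X,-) \simeq \Map_{\qc(X)}(\cO_X,-)$ preserves all small colimits, and since the forgetful functor $\Mod_A \to \Mod_k$ creates colimits, the pushforward $f_* : \qc(X) \to \Mod_A$ preserves them as well. This handles the first assertion.

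The projection formula is then formal. The natural map $f_*(M \otimes f^*N) \to f_*M \otimes N$ is tautologically an equivalence when $N = \cO_Y$; both sides are colimit-preserving in $N$ (the left-hand side by colimit preservation of $f_*$, $f^*$ and $\otimes$); and $\qc(Y) = \Mod_A$ is generated under colimits by the unit $\cO_Y$. Hence the map is an equivalence for all $N$.

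For base change with $Y = \Spec A$ and $Y' = \Spec B$ both affine, the pullback $g' : X' \to X$ is also affine, and $g'_* \cO_{X'} \simeq f^*B$ by the defining property of $X'$ as the relative Spec of $f^*B$ over $X$. Combined with the projection formula for the affine morphism $g'$, this gives $g'_* g'^* M \simeq M \otimes_{\cO_X} f^*B$; applying $f_*$ and invoking the projection formula for $f$ yields
\[
f_* g'_* g'^* M \;\simeq\; f_*M \otimes_A B \;\simeq\; g_* g^* f_* M.
\]
Because $g_*$ is conservative and faithfully tracks the $B$-module structure for affine $g$, this descends to the required equivalence $f'_* g'^* M \simeq g^* f_* M$. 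I expect the main obstacle to be the descent step in the first paragraph—verifying that each assertion is genuinely local on $Y$ (and on $Y'$ for base change) in a way consistent with the derived framework of \cite{dag1, dag2}—since once the affine reduction is in place, the remaining content follows essentially formally from the compactness of $\cO_X$.
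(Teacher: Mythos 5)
Your affine-case arguments are essentially the paper's: over $Y=\Spec A$ one gets colimit preservation of $f_*=\Gamma(X,-)$ from compactness of $\cO_X$, the projection formula by noting both sides are colimit-preserving in $N$ and agree on the unit, and base change for a map of affines by identifying $X\times_Y \Spec B$ as the relative Spec of $f^*g_*\cO_{Y'}$ over $X$ and applying the projection formula twice. The genuine gap is your first paragraph. The claim that ``each assertion is local on $Y$'' is precisely where a circularity sits, and you leave it unresolved (indeed you flag it yourself as the main obstacle). To test colimit preservation, the projection formula, or the base change map after restricting along an affine cover $u:U\to Y$, you must already know that $u^*f_*$ agrees with the pushforward along $X\times_Y U\to U$ applied to the restriction of $M$ --- that is, base change along an affine mapping to a \emph{general} $Y$ --- and this is part of the statement being proved; it does not follow from the affine-over-affine case, where $Y$ itself was assumed affine. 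Covering $Y'$ by affines is harmless (that reduction is the paper's last line), but covering $Y$ is not available to you at that stage.

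The paper closes exactly this gap with a non-formal argument that your proposal is missing. It introduces an auxiliary functor $f_+$ whose value on any $g:\Spec B\to Y$ is, by definition, the pushforward to $\Spec B$ of the pullback of $M$ to $X\times_Y\Spec B$; it checks via the affine-over-affine lemma that these values glue to a well-defined object of $\qc(Y)\simeq \lim_{\Aff/Y}\Mod_B$; and it then proves $f_*\simeq f_+$ (Lemma \ref{qc and fiber}). That last step carries the real content: it uses the formula $K\simeq \lim_{\Aff/Y}g_*g^*K$, the fact that $f_*$ is a right adjoint and hence commutes with this limit, and the descent equivalence $\qc(X)\simeq \lim_{\Aff/Y}\qc(X\times_Y\Spec B)$, which in turn rests on $X\simeq \colim_{\Aff/Y}(X\times_Y\Spec B)$, proved by choosing a \v{C}ech cover of $Y$ and using that geometric realizations commute with fiber products. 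Once $f_*\simeq f_+$ is known, affine base change over an arbitrary base holds by construction, colimit preservation and the projection formula for general $Y$ follow, and the case of an arbitrary $g:Y'\to Y$ is handled by covering $Y'$ by affines. To complete your proof you would need to supply this $f_*\simeq f_+$ (or an equivalent descent) argument; without it, the reduction to the affine case is not legitimate.
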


\begin{remark}
The conclusions of the proposition in fact do not require the full
strength of the assumption that $f$ be perfect. The only hypothesis
needed is that $f_*$ preserve colimits, or equivalently that the
unit is relatively compact.
\end{remark}

\begin{remark}
In the setting of simplicial commutative rings, and for $f:X\to Y$ a
bounded, separated, quasi-compact relative  derived algebraic space, the following
results can essentially be found in Proposition 5.5.5 of Lurie's
thesis. The arguments in this section are an expanded version of parts of the
arguments there, which extend largely unchanged to any reasonable
setting for derived algebraic geometry such as $\cE_\infty$-ring
spectra.
\end{remark}

Let us first consider the case when $Y=\Spec A$ is affine, so that
$f:X\to \Spec A$ is perfect over $A$. Then the pushforward $f_*$
coincides with the global sections functor $\Gamma:\qc(X)\ra
\Mod_A$. Over an affine base, $f_*$ is colimit preserving if and
only if the structure sheaf $\cO_X$ (which is always dualizable) is
a compact object of $\qc(X)$, which follows from $f$ being perfect.

\begin{lemma} Let $M\in \qc(X)$, and let $N\in \qc(Y) \simeq \Mod_A$.
Then the natural projection map $f_*M \otimes N \ra f_*(M\otimes f^*
N)$ is an equivalence.
\end{lemma}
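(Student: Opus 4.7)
The plan is to prove the projection formula by a standard reduction to the generator of $\Mod_A$, exploiting that both sides of the map are colimit-preserving functors of $N$.

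First, I would observe that both the source and the target of the natural projection map are functorial in $N\in\qc(Y)\simeq \Mod_A$, and that the projection map is a natural transformation of functors $\qc(Y)\to \qc(Y)$. The strategy is to show that both functors preserve small colimits in $N$, and then to check that the map is an equivalence for the single generator $N = A$.

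Next, I would verify colimit preservation on each side. On the source, the functor $N \mapsto f_*M \otimes N$ preserves colimits because the symmetric monoidal structure on $\qc(Y)$ distributes over colimits (as $\qc(Y)$ is a presentable stable symmetric monoidal $\oo$-category). On the target, the functor $N \mapsto f_*(M \otimes f^*N)$ is the composition of three colimit-preserving functors: $f^* \colon \qc(Y)\to \qc(X)$ is a left adjoint so preserves colimits; tensoring $M\otimes -$ preserves colimits by the same distributivity as above; and $f_*\colon \qc(X)\to \qc(Y)$ preserves colimits by the perfectness hypothesis on $f$ (or, as noted in the preceding remark, merely by the relative compactness of $\cO_X$, which holds since $f$ is perfect and $Y$ is affine).

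Now, $\Mod_A$ is compactly generated by the free module $A$ itself, so any object of $\qc(Y)$ is a colimit of copies of $A$. Since both sides of the map preserve colimits in $N$, it suffices to verify the equivalence in the case $N = A$. In this case, $f^*A \simeq \cO_X$, since $f^*$ is symmetric monoidal and sends the unit to the unit; thus $M\otimes f^*A \simeq M$, and the right-hand side becomes $f_*M$. The left-hand side is $f_*M \otimes A \simeq f_*M$, since $A$ is the monoidal unit of $\qc(Y)$. Unwinding the definition of the projection map, it reduces in this case to the identity on $f_*M$, which is trivially an equivalence. Hence the map is an equivalence for all $N$.

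The main (and only) substantive obstacle is the colimit preservation of $f_*$ on the right-hand side; once that is granted from the hypothesis that $f$ is perfect, the remainder is a formal reduction to the generator.
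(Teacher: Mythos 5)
Your proposal is correct and follows essentially the same route as the paper: both sides are colimit-preserving functors of $N$ (using that $f_*$ preserves colimits since $f$ is perfect and the base is affine), so the natural transformation is an equivalence once it is checked on the generator $N=A$, where both sides reduce to $f_*M$.
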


\begin{proof} Tensor products and pullbacks always preserve
colimits, and in our setting $f_*$ is colimit preserving as well.
Therefore for any $M$ the functors $f_* M\otimes (-)$ and $f_*(M
\otimes f^*-)$ define colimit preserving endofunctors of $\Mod_A$.
Hence both functors are determined by their value on $A$, and
canonically take the value $f_* M$. We find that the natural map is
an equivalence.
\end{proof}

Let us continue with $f: X \ra \Spec A$ as above, and consider $g:
\Spec B \ra \Spec A$ an arbitrary map of affine derived schemes.
Consider the Cartesian square.
$$
\xymatrix{
X\times _A \Spec B\ar[d]^{g'} \ar[r]^-{f'}& \Spec B \ar[d]^g\\
X \ar[r]^-f & \Spec A\\}$$

\begin{lemma}
The natural base change morphism $g^* f_* \ra f'_* g'^*$ is an
equivalence.
\end{lemma}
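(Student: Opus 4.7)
The plan is to reduce to verifying the base change map after applying $g_*$. Since $g:\Spec B \to \Spec A$ is affine, $g^*$ is the functor $-\otimes_A B$ and $g_*$ is restriction of scalars; in particular, $g_*$ is both colimit-preserving and conservative, so the base change map $g^*f_*M \to f'_*g'^*M$ is an equivalence in $\Mod_B$ if and only if its image under $g_*$ is an equivalence in $\Mod_A$. I will compute both sides after $g_*$ and show they are naturally equivalent to $f_*M \otimes_A B$.

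The main computation proceeds as follows. Write $X'=X\times_A \Spec B$ for brevity. Under $g_*$, the source of the base change map becomes $g_*g^*f_*M \simeq f_*M \otimes_A B$ by the affineness of $g$. For the target, the identity $g \circ f' = f \circ g'$ gives $g_*f'_* \simeq f_*g'_*$, so it suffices to compute $f_*g'_*g'^*M$. Observe that $g'$ is affine, since affine morphisms are stable under base change; hence the projection formula for $g'$ is essentially tautological and yields $g'_*g'^*M \simeq M\otimes_{\cO_X} g'_*\cO_{X'}$. An affine-local check on $X$, using $\Spec R \times_X X' \simeq \Spec(R \otimes_A B)$ for any affine $\Spec R \to X$ (on which both sides restrict to $R\otimes_A B$), identifies $g'_*\cO_{X'}$ with $f^*B$ in $\qc(X)$. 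Applying $f_*$ and invoking the previous lemma (projection formula for $f$) with $N = B$ then gives
$$
f_*g'_*g'^*M \simeq f_*(M \otimes_{\cO_X} f^*B) \simeq f_*M \otimes_A B.
$$

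To conclude, one must verify that the resulting natural equivalence $g_*g^*f_*M \simeq g_*f'_*g'^*M$ in $\Mod_A$ coincides with $g_*$ applied to the base change map. This is a routine check by tracing through the standard construction of the base change map as the $(g^*,g_*)$-adjoint of the composite $f_*M \to f_*g'_*g'^*M \simeq g_*f'_*g'^*M$ obtained by applying $f_*$ to the unit $M \to g'_*g'^*M$. Conservativity of $g_*$ then yields the desired equivalence in $\Mod_B$. The most delicate point is the natural identification $g'_*\cO_{X'} \simeq f^*B$: it is straightforward affine-locally, but its naturality must be traced carefully so that the final composite agrees with $g_*$ of the specific base change transformation rather than merely an abstract equivalence.
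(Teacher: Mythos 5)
Your proof is correct and is essentially the paper's own argument: both identify $g'_*\cO_{X'}$ (the paper's algebra object $C$) with $f^*g_*B$ using affineness of $g'$, and then apply the projection formula for $f$ to compute $f_*g'_*g'^*M \simeq f_*(f^*g_*B\otimes M)\simeq f_*M\otimes_A B$. Your explicit reduction via conservativity and colimit-preservation of $g_*$ simply makes precise what the paper does implicitly when it compares the two sides as objects over $\Spec A$ via $f'_*g'^*M\simeq f_*(C\otimes M)$.
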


\begin{proof}
Since $g$ is affine and hence $g'$ is as well, the fiber product
$X\times_A \Spec B$ can be identified with the relative spectrum
$\Spec_X C$ of a commutative algebra object $C \in {\rm
Alg}(\qc(X))$, and $g'_*$ induces an equivalence $\qc({X\times_A
\Spec B}) \simeq \Mod_C(\qc(X))$. Furthermore, the pullback $g'^*$
can be described as tensoring with $C$, and thus in particular $f'_*
g'^* M \simeq f_* (C\otimes M)$. However, the global sections
functor takes fiber products to tensor products, so we can identify
$C \simeq f^* g_* B$. Applying the previously established projection
formula twice, we can now compute $f_* (f^*g_* B \otimes M) \simeq
f_* M \otimes_A g_* B \simeq g^* f_* M \otimes_B B \simeq g^* f_*
M$, completing the proof.
\end{proof}

Now consider the general case where $f: X \ra Y$ is any perfect morphism. 
Let us define a pushforward functor $f_+:\qc(X)\ra\qc(Y)$
by {\it requiring} it to satisfy base change for affine derived
schemes over $Y$. That is, for any $M\in \qc(X)$, let us define $f_+
M$ to take the value $(f_+ M)(g) = g'_* f'^* M$, for any $g:\Spec
B\to Y$. As a corollary of the previous lemma, we can verify that
this definition is sensible.

\begin{cor} $f_+ M$ is a well-defined quasi-coherent sheaf on $Y$.
\end{cor}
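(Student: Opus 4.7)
Since $\qc(Y) \simeq \lim_{g: \Spec B \to Y} \qc(\Spec B)$, exhibiting $f_+M$ as an object of $\qc(Y)$ amounts to supplying, for each affine $g: \Spec B \to Y$, the value $(f_+M)(g)$ (which is given by the definition) together with coherent transition equivalences $h^*(f_+M)(g) \simeq (f_+M)(g \circ h)$ for every morphism $h: \Spec B' \to \Spec B$ over $Y$. My plan is to extract these transition equivalences directly from the affine-to-affine base change lemma just proved, and then to appeal to the $\oo$-categorical formalism to package the remaining higher coherences.

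For the main step, I fix $h$ as above and form the stacked Cartesian diagram obtained by pulling $f$ back first along $g$ and then along $h$:
$$
\xymatrix{
X \ti_Y \Spec B' \ar[r]^-{h'} \ar[d]_{f''} & X \ti_Y \Spec B \ar[r]^-{g'} \ar[d]^{f'} & X \ar[d]^f \\
\Spec B' \ar[r]^-{h} & \Spec B \ar[r]^-{g} & Y
}
$$
Both inner squares and the outer rectangle are Cartesian, and in particular $(g \circ h)' = g' \circ h'$. Because $f$ is perfect, so is the base change $f'$, which exhibits $X \ti_Y \Spec B$ as a perfect stack over the affine $\Spec B$; the previously proved base change lemma therefore applies to the left square and yields a natural equivalence $h^* f'_* \simeq f''_* h'^*$. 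Precomposing with $g'^* M$ and using the contravariant functoriality of pullback produces exactly the required equivalence $h^*(f_+M)(g) \simeq (f_+M)(g \circ h)$.

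The final step is to promote these pointwise equivalences to an actual object of the limit $\lim_g \Mod_B$, which requires them to be coherently compatible under composition of the $h$'s. Here I would appeal to the $\oo$-categorical formalism: since every ingredient — pullback, pushforward, and the base change transformation — is a functorial and natural construction from the start, these higher coherences are built in rather than a separate obstruction. I do not expect any serious difficulty on this front; the substantive input is the perfectness hypothesis on $f$, which is used precisely to guarantee that each $f'_*$ preserves colimits so that the affine base change lemma is available fiberwise.
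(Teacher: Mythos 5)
Your proof is correct and follows essentially the same route as the paper: reduce to a transition equivalence for a composite of affines over $Y$, form the stacked Cartesian diagram, note that $f':X\times_Y\Spec B\to\Spec B$ is perfect since $f$ is, and apply the affine base change lemma to the left square, with the higher coherences absorbed into the $\oo$-categorical limit description of $\qc(Y)$ (a point the paper leaves implicit). No substantive differences.
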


\begin{proof}
Since $\qc(Y) \simeq \lim_{\Aff/Y} \Mod_B$, the claim that $f_+ M$
forms a quasi-coherent sheaf on $Y$ is equivalent to the claim that
for any diagram of the form
$$
\xymatrix{ X\times_Y \Spec C\ar[d]^-{f''}\ar[r]^-{h'}&X\times_Y
\Spec B
\ar[d]^{f'}\ar[r]^-{g'}& X\ar[d]^-f\\
\Spec C \ar[r]^-h&\Spec B\ar[r]^-g&Y\\
}
$$
$f_+M(g \circ h)$ is canonically equivalent to $h^*f_+ M(g)$.
Unraveling these formulas, by definition we have that $f_+M(g\circ
h) = f''_*h'^* g'^* M$ and that $h^*f_+ M(g) \simeq h^* f'_* g'^*
M$. By the previous lemma, these are equivalent by base change in
the left hand square: $f':X\times_Y \Spec B\ra \Spec B$ is perfect, and
so $h^* f'_* \simeq f''_* h'^*$.
\end{proof}

\begin{lemma}\label{qc and fiber} The natural transformation $f_* \ra f_+$ is an equivalence
of functors.
\end{lemma}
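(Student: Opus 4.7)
The plan is to identify $f_+$ with $f_*$ by exhibiting $f_+$ as right adjoint to pullback $f^*:\qc(Y)\to \qc(X)$, and then confirming that the base-change natural transformation $f_*\to f_+$ realizes the canonical equivalence between the two right adjoints. The natural transformation itself is assembled pointwise: for each $g:\Spec B\to Y$ the base-change map $g^*(f_*M)\to f'_*g'^*M$ is a morphism in $\Mod_B$, compatible under further pullbacks of affines, so these maps descend to a morphism $f_*M\to f_+M$ in $\qc(Y)\simeq \lim_{g\in \Aff_{/Y}}\Mod_B$.

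To verify the adjunction, I would compute for $N\in\qc(Y)$:
\begin{align*}
\Hom_{\qc(Y)}(N,f_+M)
&\simeq \lim_g\Hom_{\Mod_B}(g^*N,\,f'_*g'^*M)\\
&\simeq \lim_g\Hom_{\qc(X\times_Y \Spec B)}(f'^*g^*N,\,g'^*M)\\
&\simeq \lim_g\Hom_{\qc(X\times_Y \Spec B)}(g'^*f^*N,\,g'^*M)\\
&\simeq \Hom_{\qc(X)}(f^*N,\,M),
\end{align*}
where the first step is the limit description of mapping spaces in $\qc(Y)$, the second is the $(f'^*,f'_*)$ adjunction (valid because $f'$ is a perfect morphism to an affine, hence $f'_*$ is well-behaved), the third is the canonical identification $f'^*g^*\simeq g'^*f^*$ from the fiber square, and the final step is descent for quasi-coherent sheaves in the form $\qc(X)\simeq \lim_g\qc(X\times_Y \Spec B)$. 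Uniqueness of adjoints then yields a canonical equivalence $f_*\simeq f_+$, and unwinding the unit/counit data for the two adjunctions shows it coincides with the base-change map constructed above.

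The main obstacle is justifying the descent step $\qc(X)\simeq \lim_g \qc(X\times_Y \Spec B)$. This would be derived from the tautological presentation $Y\simeq \colim_{g\in \Aff_{/Y}}\Spec B$: pulling back along $f$, and using that fiber products commute with colimits in the $\infty$-topos of derived stacks, yields $X\simeq \colim_g X\times_Y \Spec B$, whereupon descent for $\qc$ (which sends colimits of stacks to limits of $\infty$-categories) converts this into the desired limit formula. Equivalently, one can reorganize the defining limit $\qc(X)\simeq \lim_{h\in \Aff_{/X}}\Mod_C$ by grouping each $h:\Spec C\to X$ according to its composition $f\circ h\in \Aff_{/Y}$, obtaining the same conclusion. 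Once this descent is in place, the remaining manipulations are purely formal.
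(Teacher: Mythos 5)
Your proposal is correct, and it reaches the conclusion by a somewhat different route than the paper, though both arguments turn on the same key input. The paper's proof never mentions adjoint uniqueness: it uses the general reconstruction fact $K\simeq \lim_{g\in \Aff/Y} g_*g^*K$, the defining property $g^*f_+M\simeq f'_*g'^*M$, the commutation $g_*f'_*\simeq f_*g'_*$, and the fact that $f_*$ preserves limits, to rewrite $f_+M\simeq f_*(\lim_g g'_*g'^*M)$, and then reduces the lemma to the unit map $M\to \lim_g g'_*g'^*M$ being an equivalence. You instead exhibit $f_+$ as a right adjoint of $f^*$ via the $\Hom$-computation and invoke uniqueness of right adjoints. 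In both cases the heart of the matter is exactly the descent statement $\qc(X)\simeq \lim_{g\in\Aff/Y}\qc(X\times_Y\Spec B)$, and you justify it precisely as the paper does: $X\simeq \colim_g (X\times_Y\Spec B)$ because pulling back preserves colimits of stacks (the paper phrases this as geometric realization of the \u Cech cover commuting with fiber products; later, in Theorem \ref{main theorem}, it uses the right adjoint of $X\times_Y(-)$, which is your formulation), together with $\qc(-)$ taking colimits to limits. What your route buys is a cleaner conceptual statement ($f_+\dashv f^*$, so $f_+\simeq f_*$); what it costs is that the lemma asserts a \emph{specific} transformation is an equivalence, so you must still match the abstract equivalence with the base-change map. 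Your closing sentence gestures at this correctly; to make it precise, observe that under your adjunction the component maps $g^*f_*M\to f'_*g'^*M$ are adjoint to $g'^*$ of the counit $f^*f_*M\to M$, so the assembled map $f_*M\to f_+M$ corresponds to the counit itself, i.e., it is the canonical comparison of right adjoints and hence an equivalence. Two small remarks: the adjunction $(f'^*,f'_*)$ exists for any morphism, so perfection of $f'$ is not needed there (it is only needed where one wants $f'_*$ to preserve colimits, consistent with the paper's remark that this lemma uses much less than perfection); and your alternative ``re-indexing'' justification of the descent step would require a cofinality argument that you do not supply, but since your primary justification is complete this does not affect the proof.
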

\begin{proof} Take any $N\in\qc(Y)$ and $M\in \qc(X)$. First note that for
any quasi-coherent sheaf $K \in\qc(Z)$, there is an equivalence
$K\simeq \lim_{g\in \Aff/Z} g_* g^* K$. Thus we calculate
$$
f_+M \simeq \lim_{\Aff/Y} g_* g^* f_+ M \simeq \lim_{\Aff/Y} g_*
f'_* g'^* M \simeq \lim_{\Aff/Y}  g_* f'_* g'^* M \simeq
\lim_{\Aff/Y} f_* g'_* g'^* M \simeq f_* (\lim_{\Aff/Y} g'_* g'^*
M).
$$
To prove the lemma, it thus suffices to show that the natural map
$M\ra \lim g'_* g'^* M$ is an equivalence.

This is a consequence of the stronger claim that the functor $\qc(X)
\ra \lim_{\Aff/Y} \qc({X\times_Y B})$ is an equivalence. Since the
functor $\qc(-)$ takes all colimits of stacks to limits, it
therefore suffices to show that the natural map $X \ra \lim_{\Aff/Y}
(X\times_Y B)$ is an equivalence. This limit can be calculated by
picking an affine cover $U\ra Y$, and realizing $Y$ as the geometric
realization of the usual simplicial object $U_*\to Y$. Finally,
since geometric realization commutes with fiber products we are
done.
\end{proof}

Since $f_+$ was defined to satisfy base change and preserve
colimits, we now have the following.

\begin{proof}[Proof of Proposition \ref{Thomason pushforwards}] The first
assertions were proved above. Since base change is local in the
target, one can prove the final statement for an arbitrary $Y' \ra
Y$ by choosing a cover of $Y'$ by an affine $\Spec A \ra Y'$, thus
reducing to the case which was proved above.
\end{proof}


\subsection{Constructions of perfect stacks}\label{sect properties of air}
In this section, we construct many examples of perfect stacks.

Throughout what follows,
by a derived scheme, we mean a derived stack which admits a Zariski open covering
by affine derived schemes. Recall that
a morphism is Zariski open if it induces a Zariski open morphism
on the underlying truncated underived stacks, as well as isomorphisms of the 
higher homotopy
groups of the structure sheaves over the Zariski open.
Equivalently, one can think of a derived scheme in terms of the underlying truncated
underived scheme equipped with a derived enhancement of the structure sheaf.
Following usual conventions, we say that a derived scheme is quasi-compact if any Zariski open cover admits
a finite refinement. 

We begin with two lemmas needed to show quasi-compact derived schemes with affine diagonal are in fact perfect.

\begin{lemma}\label{colimit preserving} For $X$ a quasi-compact
derived scheme with affine diagonal the global sections functor $\Gamma:\qc(X)\ra \Mod_A$
is colimit preserving.
\end{lemma}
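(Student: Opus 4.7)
The plan is to run a \v Cech descent argument, exploiting the quasi-compactness to reduce the problem to a finite limit of affine cases. First, by quasi-compactness choose a finite affine Zariski cover $\{U_i \to X\}_{i=1,\ldots,n}$ and combine it into a single affine surjection $U = \coprod_i U_i \to X$. Because the diagonal of $X$ is affine, every fiber product $U \times_X \cdots \times_X U$ is an affine derived scheme, so the \v Cech nerve $U_\bullet \to X$ is a simplicial affine derived scheme.

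Second, descent for quasi-coherent sheaves (which holds for derived stacks by construction of $\qc(X)$ as a limit) gives $\qc(X) \simeq \lim \qc(U_\bullet)$, and under this identification $\Gamma(X,-)$ becomes the totalization $\Tot \, \Gamma(U_\bullet, (-)|_{U_\bullet})$ of the associated cosimplicial object in $\Mod_k$. Termwise, each $\Gamma(U_p, -)$ is colimit-preserving: $U_p$ is affine, so $\cO_{U_p}$ is a compact generator of $\qc(U_p)$ and Lemma \ref{affine air} applies.

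Third, and this is the crux, one must show the totalization preserves colimits despite the fact that totalizations of cosimplicial objects typically fail to commute with filtered colimits. In the stable setting preservation of all colimits reduces to preservation of filtered colimits, so it suffices to show that the \v Cech totalization has bounded cohomological amplitude, depending only on $n$. Concretely, I would argue by induction on the size of the affine cover: the case $n=1$ is the affine case already in hand, and the inductive step uses a Mayer--Vietoris square for $X = V \cup U_n$ (where $V$ is covered by $U_1,\ldots,U_{n-1}$) to express $\Gamma(X,-)$ as a finite limit of $\Gamma(V,-)$, $\Gamma(U_n,-)$ and $\Gamma(V\cap U_n,-)$, each colimit preserving by induction (noting that affine diagonal guarantees $V\cap U_n$ is again quasi-compact with affine diagonal, and is covered by $n-1$ affines).

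The main obstacle is arranging the induction cleanly in the derived setting: one needs to know that restricting to the smaller open $V$ behaves well, and that the Mayer--Vietoris square (a finite pushout in stacks gives a finite pullback of global sections) is valid for quasi-coherent sheaves. This is standard in the underived setting, and in the derived setting it follows from \v Cech descent applied to the two-term cover $\{V, U_n\}$ combined with the fact that a finite limit of colimit-preserving functors between stable presentable $\infty$-categories is again colimit-preserving.
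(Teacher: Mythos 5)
Your proof is correct, but it takes a genuinely different route from the paper's. The paper (following \cite[Proposition 5.5.5]{dag}) also first reduces to small coproducts, but then works at the level of homotopy groups: it establishes a uniform lower bound on the connectivity of $\Gamma M$ for $M$ concentrated in a single degree, and then runs the Bousfield--Kan/\u Cech spectral sequence of a finite affine cover, so that each $\pi_i\Gamma M$ is computed by a finite limit of terms that commute with coproducts. You instead bypass the spectral sequence and the $t$-structure/boundedness step entirely, inducting on the number of affines in the cover via a derived Mayer--Vietoris square: the gluing statement you need (that $X$ is the pushout of $V \la V\cap U_n \ra U_n$, so that $\Gamma(X,-)$ is the corresponding pullback of global sections functors) is exactly Lemma \ref{lemma fin colim} of the paper, and your bookkeeping is right -- affine diagonal makes each $U_i\cap U_n$ affine, so $V$ and $V\cap U_n$ are again quasi-compact with affine diagonal and covered by $n-1$ affines -- while the fact that a finite limit of colimit-preserving functors of stable presentable $\oo$-categories preserves all colimits (finite limits being expressible through finite colimits and shifts in the stable setting) closes the induction. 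Your argument is in the spirit of the Bondal--Van den Bergh/Neeman-style induction the paper itself uses later for Proposition \ref{Neeman redux}; what the paper's version buys in exchange is the explicit boundedness of the \u Cech amplitude, which is the quantitative content behind the statement and works directly from the full \u Cech nerve. One small remark: your passing suggestion that it "suffices to show the totalization has bounded cohomological amplitude" is really a sketch of the paper's argument rather than of yours, and is not needed once the Mayer--Vietoris induction is in place.
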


\begin{proof} We briefly sketch the argument
of \cite[Proposition 5.5.5]{dag}. The derived global sections
functor $\Gamma$  preserves finite colimits. Thus it suffices to show $\Gamma$ preserves small
coproducts: we must check that the natural map $\coprod_\alpha
\Gamma(M_\alpha) \ra \Gamma(\coprod_\alpha M_\alpha)$ is an
equivalence, i.e.,~that the induced map on homotopy groups
$\coprod_\alpha \pi_*\Gamma(M_\alpha) \ra \pi_*\Gamma(\coprod_\alpha
M_\alpha)$ is an equivalence. Since $\pi_*$ is colimit preserving,
it suffices to check that the individual terms $\pi_i \Gamma$ each
preserve small coproducts.

This is shown in two steps. First, one checks that for $M\in\qc(X)$
concentrated in a single degree, there exists $m$ such that $\pi_n
\Gamma M$ is zero for all $n< m$. Thus to establish the assertion,
it suffices to work in the subcategory $M\in(\qc(X))^{\leq n+m}_{\geq n}$ for which $\pi_* \Gamma M$ is concentrated in bounded
degrees.

Next, one chooses a finite affine cover $U \ra X$ giving the usual
simplicial object $U_* \ra X$, and thus an identification $\Gamma M
\simeq \lim (M|_{U_q})$.  The resulting Bousfield-Kan, or \u Cech,
spectral sequence has $E_1$-term given by the $\pi_p (M|_{U_q})$, and
converges to $\pi_* \Gamma M$. (The construction of the $E_1$-term
evidently commutes with coproducts in $\qc(X)$, since pullback to an
affine is colimit preserving.) Using the previous step, only finitely
many terms in the spectral sequence may involve differentials
affecting a particular group $\pi_i \Gamma M$.  Therefore we have
expressed $\pi_i \Gamma M$ as a finite limit of terms which preserved
colimits in $M$ and so the result follows.
\end{proof}

The following shows that we can glue derived schemes with finite colimits rather
than geometric realizations. We state it in the simplest form applicable to 
the assertions which follow.

\begin{lemma}\label{lemma fin colim}
Suppose $X$ is a
derived scheme, and $U\coprod V \to  X$ is an open Zariski cover.
Then the following is a colimit diagram
$$
\xymatrix{
U \cap V 
 \ar@<+.5ex>[r] \ar@<-.5ex>[r] &
U\coprod V \ar[r] & X.
}
$$
\end{lemma}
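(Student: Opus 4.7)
The plan is to verify the lemma by Zariski descent for derived stacks. I would first form the pushout
$$Z = \on{colim}(U \cap V \rightrightarrows U \coprod V)$$
in the $\infty$-category of derived stacks. Since the two composites $U \cap V \rightrightarrows U \coprod V \to X$ agree, the universal property of the pushout supplies a canonical comparison map $\phi : Z \to X$ extending the given diagram. It suffices to show that $\phi$ is an equivalence.

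Setting $W = U \coprod V$, the map $W \to X$ is a Zariski cover, hence an effective epimorphism in the $\infty$-topos of derived stacks, so equivalences can be detected after base change along $W \to X$. Thus it suffices to show that $\phi \times_X W : Z \times_X W \to W$ is an equivalence. By universality of colimits in an $\infty$-topos, pulling back along $W \to X$ commutes with the pushout defining $Z$, and I would compute the pulled back diagram using that Zariski open immersions are monomorphisms, so that $U \times_X U \simeq U$, $V \times_X V \simeq V$, and $U \times_X V \simeq U \cap V$. Substituting these identifications, the pullback $Z \times_X U$ becomes the pushout of $U \xleftarrow{i} U \cap V \xrightarrow{\sim} U \cap V$, where the second leg is the identity; since one leg is an equivalence, the pushout collapses to $U$. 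By symmetry $Z \times_X V \simeq V$, whence $Z \times_X W \simeq U \coprod V = W$, and $\phi \times_X W$ is the identity.

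The main obstacle I anticipate is the careful bookkeeping in this collapsing step: one must identify the induced maps in the pulled-back diagram correctly and see that one of the two parallel legs becomes an equivalence, not merely an inclusion. The three foundational inputs required, namely that Zariski covers are effective epimorphisms in the $\infty$-topos of derived stacks, that colimits are universal, and that Zariski open immersions are monomorphisms, are standard in the formalisms of \cite{topos, HAG1, HAG2}. Granting these, the rest of the argument is purely formal and proceeds by the computation above.
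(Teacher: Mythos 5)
Your argument is correct, but it is genuinely different from the one in the paper. The paper works on the algebra side: it asserts that the colimit statement reduces to showing that the square of structure sheaves $\cO_X \to \cO_U,\ \cO_X\to\cO_V \to \cO_{U\cap V}$ is Cartesian, and verifies this by applying the restriction $u^*(-)\simeq (-)\ot_{\cO_X}(\cO_U\times\cO_V)$ along the cover $u: U\coprod V\to X$, which is conservative and preserves finite limits, and after which the square becomes the evidently Cartesian square of products $\cO_U\times\cO_V \to \cO_{U\cap V}\times\cO_V,\ \cO_U\times\cO_{U\cap V}\to \cO_{U\cap V}\times\cO_{U\cap V}$. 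You instead stay entirely on the stack side and run the standard topos-theoretic argument: form the colimit $Z$, note the comparison map $Z\to X$, and check it is an equivalence after base change along the cover, using that covers are effective epimorphisms, that colimits are universal, and that Zariski open immersions are monomorphisms so the pulled-back parallel pair collapses. Your route is more formal and more general — it works verbatim in any $\infty$-topos for a cover by two monomorphisms, and it makes the descent mechanism explicit — but it requires invoking that the $\infty$-category of derived stacks (étale sheaves of spaces) has universal colimits and that étale/Zariski covers induce effective epimorphisms, plus the bookkeeping you flagged to identify $\phi\times_X W$ with the identity rather than merely finding an abstract equivalence of objects. The paper's route is shorter in its own framework and dovetails with how the lemma is used (gluing algebra/structure-sheaf data in $\qc$), at the cost of leaving implicit the translation between a colimit diagram of open substacks and a Cartesian square of structure sheaves. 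Both are sound; yours trades that implicit translation for explicit $\infty$-topos input.
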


\begin{proof}
It suffices to show that the following diagram of algebra objects is Cartesian
$$
\xymatrix{
\cO_X \ar[d]\ar[r] & \cO_V\ar[d]\\
\cO_U \ar[r]& \cO_{U \cap V}
}
$$

Let $u: U\coprod V\to X$ denote the cover. Since the restriction $u^*(-) \simeq (-) \ot_{\cO_X} (\cO_U \ti \cO_V)$ is conservative and preserves finite limits, it suffices
to show that the restriction of the above diagram is Cartesian.
But this is nothing more than the clearly Cartesian diagram
$$
\xymatrix{
\cO_U \ti \cO_V \ar[d]\ar[r] & \cO_{U\cap V} \ti \cO_{V}\ar[d]\\
\cO_{U} \ti \cO_{U\cap V}  \ar[r]& \cO_{U \cap V} \ti \cO_{U\cap V}
}
$$
\end{proof}

\begin{prop}\label{Neeman redux}
Quasi-compact derived schemes with affine diagonal are perfect.
\end{prop}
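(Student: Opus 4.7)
The plan is to establish the two components of the definition of perfect: that $\qc(X)$ is compactly generated and that compact objects coincide with perfect objects. Together these give $\qc(X)\simeq \Ind\Perf(X)$ via the Schwede-Shipley-Neeman recognition of compactly generated stable $\oo$-categories recalled in Section \ref{generators}. The easy half, perfect $\Rightarrow$ compact, is immediate from Lemma \ref{colimit preserving}: since $\Gamma(X,-) = \Hom_{\qc(X)}(\cO_X,-)$ preserves colimits, $\cO_X$ is compact. By Proposition \ref{perfect=dualizable}, every perfect object $M$ is dualizable with dual $M^\vee$, so
$$\Hom_{\qc(X)}(M,-) \simeq \Gamma(X, M^\vee \otimes -)$$
is a composition of colimit-preserving functors, hence $M$ is compact.

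For compact generation, I would induct on the minimal number $n$ of affine opens needed to cover $X$. The base case $n=1$ is handled by Lemma \ref{affine air} together with the classical fact that $\Mod_A$ is generated by the free module $A$. For the inductive step, write $X = U\cup V$ with $U$ affine and $V$ admitting a cover by $n-1$ affines, so that $V$ is perfect by the inductive hypothesis. The intersection $W := U\cap V \simeq U\times_X V$ is affine by the affine diagonal hypothesis, hence perfect. Applying Lemma \ref{lemma fin colim} and the fact that $\qc(-)$ takes colimits of stacks to limits of $\oo$-categories yields the pullback description
$$\qc(X) \simeq \qc(U) \times_{\qc(W)} \qc(V).$$

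The main work, and principal obstacle, lies in producing a set of compact perfect generators on $X$ out of the known compact perfect generators on $U$, $V$, and $W$. The required technique is the Thomason--Trobaugh extension procedure: given any perfect complex $P$ on $V$, the restriction $P|_W$ extends to a perfect complex on $U$ (possibly after taking a direct sum with a shift of itself, to kill an obstruction class), and hence $P$ itself extends to a perfect complex on $X$ via the pullback description of $\qc(X)$. Using this, one verifies generation as follows: for $0 \neq M \in \qc(X)$, either $M|_U \neq 0$ or $M|_V \neq 0$; in the first case $\cO_X$ detects $M$ through $\Gamma(U, M|_U) \neq 0$, and in the second case, the inductive hypothesis on $V$ produces a perfect $P$ on $V$ with $\Hom_V(P, M|_V) \neq 0$, which by the extension procedure lifts to a perfect $\tilde P$ on $X$ detecting $M$.

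Finally, to conclude that every compact object of $\qc(X)$ is perfect, observe that by the Neeman--Schwede--Shipley results any compact object is a retract of an iterated extension of elements of a set of compact generators. Since the generators we have constructed are perfect, and perfect objects are closed under finite colimits and retracts, every compact object is perfect. The main obstacle, as noted above, is the Thomason--Trobaugh lifting lemma across Zariski opens in the derived setting; this follows along the lines of \cite{neeman}, reducing to the underlying truncated classical scheme by successive approximation through the Postnikov tower of the structure sheaf.
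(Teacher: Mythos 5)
Your overall strategy is the same as the paper's (which follows Neeman and Bondal--Van den Bergh): induct on the size of an affine cover, use Lemma \ref{colimit preserving} to get compactness of $\cO_X$ and hence of all perfect objects, use the Thomason--Trobaugh/Neeman extension theorem with the $P\oplus P[1]$ trick to produce perfect objects on $X$, and deduce ``compact $\Rightarrow$ perfect'' from the fact that compacts are retracts of finite extensions of the generators. However, the step where you verify generation has a genuine gap. First, the claim that when $M|_U\neq 0$ ``$\cO_X$ detects $M$ through $\Gamma(U,M|_U)\neq 0$'' is false: $\cO_X$ only sees $\Hom_X(\cO_X,M)\simeq\Gamma(X,M)$, and global sections (in all degrees) can vanish while $M|_U\neq 0$ --- e.g.\ $M=\cO(-1)$ on $\PP^1$. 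Second, the claim that a perfect $P$ on $V$ with $\Hom_V(P,M|_V)\neq 0$ lifts to a perfect $\tilde P$ on $X$ ``detecting $M$'' is unjustified: the extension theorem gives you \emph{some} perfect $\tilde P$ with $\tilde P|_V\simeq P\oplus P[1]$, but $\Hom_X(\tilde P,M)$ sits in a Mayer--Vietoris triangle with the $U$-, $V$- and $W$-terms and can perfectly well vanish. Concretely, on $X=\PP^1$ with $V=\AA^1$, $P=\cO_V$ and $M=\cO(-1)$, the object $\tilde P=\cO_X\oplus\cO_X[1]$ is a legitimate extension of $P\oplus P[1]$, yet $\Hom_X(\tilde P,M)\simeq\Gamma(X,\cO(-1))\oplus\Gamma(X,\cO(-1))[-1]\simeq 0$.

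What is missing is the second family of generators that the paper's proof (following \cite{BvdB}, using \cite[Proposition 6.1]{BoN}) takes as input: a compact generator $Q$ of the kernel of the restriction $\qc(U)\to\qc(W)$, i.e.\ of the subcategory of objects supported on the closed complement of $V$; since that complement is closed in $X$, the pushforward of $Q$ to $X$ is again compact and perfect and must be added to your generating set. With it the orthogonality argument runs in the correct order: if $N$ is right orthogonal to the pushforward of $Q$, then $N|_U$ has no local cohomology along the complement, so $\Hom_U(\tilde P|_U,N|_U)\simeq\Hom_W(\tilde P|_W,N|_W)$ and Mayer--Vietoris identifies $\Hom_X(\tilde P,N)\simeq\Hom_V(\tilde P|_V,N|_V)$; orthogonality to the $\tilde P$'s then kills $N|_V$, after which $N|_U$ lies in the kernel of restriction to $W$ and orthogonality to $Q$ kills it too. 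Without $Q$ (or some equivalent device controlling objects supported off $V$), the detection claims you make cannot be salvaged, so as written the inductive step does not establish compact generation.
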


\begin{proof}
The result for ordinary (non-derived) schemes $X$ is a theorem of
Neeman \cite{neeman}, extending ideas of Thomason
\cite{thomasontrobaugh}. (In fact, Neeman proves that for
quasi-compact, quasi-separated schemes, $\qc(X)$ is compactly
generated, and dualizable and compact objects coincide. We assume
$X$ has affine diagonal only because the definition of perfect stack
requires it.)

A modified exposition of Neeman's argument appears in the work of
Bondal-Van den Bergh~\cite{BvdB}, who in fact prove that $\qc(X)$ is
generated by a {\em single} perfect object. One can translate the
latter proof, which occupies  \cite[Section 3.3]{BvdB}, directly
into the derived setting, substituting Lemma \ref{colimit
preserving} above for its underived version \cite[Corollary
3.3.4]{BvdB}, and using the natural identification $\qc(\Spec
A)\simeq \Mod_k$ instead of \cite[Corollary 3.3.5]{BvdB}. (In the
derived setting, there is no general notion of the abelian category
of quasi-coherent sheaves, so we do not need to worry about the
potential distinction between its derived category and the
quasi-coherent derived category). In what follows, we sketch the
argument for the reader's convenience, keeping the notation
from~\cite{BvdB}.

The proof that $\qc(X)$ is generated by a single perfect
(dualizable) object is an induction on the number of opens in an
affine cover of $X$. The base case of an affine derived scheme is
Lemma \ref{affine air}. For the inductive step, we write $X=Y\cup U$
with $Y$ open and $U$ affine (putting us in the context of Lemma~\ref{lemma fin colim}), and assume that $\qc(Y)$ has a perfect
generator $E$. By \cite[Proposition 6.1]{BoN}, there is an explicit
compact generator $Q$ for the kernel of the restriction from $U$ to
the intersection $S=Y\cap U$. (One can think of $Q$ as a form of the
structure sheaf of the closed complement $V=U\setminus S$). The key
to the inductive step is Neeman's abstract categorical
form~\cite[Theorem 2.1]{neemanTTY} of Thomason's extension theorem
for compact objects. This allows us to extend $E\oplus E[1]|_S$ to a
compact (hence perfect) object on $U$, and then to glue the latter
to $E\oplus E[1]$ to obtain a perfect object $P$ on all of $X$.
(Note that we extend $E\oplus E[1]$ rather than $E$ itself since
K-theoretic obstructions vanish for the former.) One then checks by
a Mayer-Vietoris argument that the sum of $P$ and the pushforward of
$Q$ (which is itself compact and perfect by support considerations)
to $X$ generates all of $\qc(X)$.

By Lemma \ref{colimit preserving}, we know that $\cO_X$ is compact,
and hence that dualizable complexes are compact. The assertion that
compact objects are dualizable follows from \cite{neemanTTY}: if a
set $\cC^\circ$ of compact objects generates $\cC$, then all compact
objects of $\cC$ are summands of finite colimits of objects of
$\cC^\circ$ and their shifts. Since $\qc(X)$ is generated by a
perfect object, we conclude that all compact objects are summands of
perfect objects, which allows one to check locally that they are
indeed perfect.
\end{proof}

We next make a simple observation about compact objects.

\begin{lemma}\label{compact=dualizable} Suppose $p:X\to Y$ is a
perfect morphism over a perfect base. Then compact and dualizable
objects of $\qc(X)$ coincide.
\end{lemma}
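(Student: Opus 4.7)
The plan is to prove the two inclusions separately. The hypotheses enter as follows: perfection of $Y$ ensures that $\cO_Y$ is compact in $\qc(Y)$, while perfection of $p$ supplies both that $p_*$ preserves colimits (by Proposition \ref{Thomason pushforwards}) and that the fibers $X\times_Y U$ of $p$ over affines $U\to Y$ are themselves perfect stacks.

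First I will handle the easier direction, dualizable implies compact. It suffices to show that the structure sheaf $\cO_X$ is compact in $\qc(X)$: any dualizable $M$ with dual $M^\vee$ satisfies
$$\Hom_{\qc(X)}(M,-) \simeq \Hom_{\qc(X)}(\cO_X, M^\vee \otimes -) \simeq \Gamma(X, M^\vee \otimes -),$$
which is then a composition of colimit-preserving functors once $\cO_X$ is known to be compact. To see this, factor the global sections functor as $\Gamma(X,-) \simeq \Gamma(Y,-)\circ p_*$. Both factors preserve colimits: $p_*$ by Proposition \ref{Thomason pushforwards} since $p$ is perfect, and $\Gamma(Y,-) = \Hom_{\qc(Y)}(\cO_Y,-)$ because $\cO_Y$ is compact in $\qc(Y)$ by perfection of $Y$.

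For the harder direction, compact implies dualizable, I will reduce along an affine cover to the case of a perfect stack. Since $Y$ is perfect and so has affine diagonal, I can pick an affine cover $g:U\to Y$ by an affine derived scheme $U$, for which $g$ is automatically an affine morphism. Let $i: X_U := X \times_Y U \to X$ be the base change of $g$; as the pullback of an affine morphism, $i$ is itself affine, and in particular $i_*$ preserves all colimits (via the identification of $\qc(X_U)$ with modules over the algebra $i_*\cO_{X_U}$ in $\qc(X)$). Its left adjoint $i^*$ therefore preserves compactness. Now given a compact $M \in \qc(X)$, the pullback $i^*M$ is compact in $\qc(X_U)$. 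Since $p$ is perfect, $X_U$ is a perfect stack, so in $\qc(X_U)$ compact and dualizable objects coincide; hence $i^*M$ is dualizable. Finally, as observed preceding Proposition \ref{perfect=dualizable}, dualizability is a local property — the dual of an object together with its unit and trace maps is functorially determined and glues along covers — so from the fact that $\{X_U \to X\}$ is a cover we conclude that $M$ is itself dualizable.

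The main technical point to be careful about is the colimit-preservation of $i_*$ for the affine morphism $i$, which in turn rests on the fact that the cover $g:U\to Y$ can be taken affine; this is guaranteed by $Y$ having affine diagonal. Everything else is a direct application of Propositions \ref{Thomason pushforwards} and \ref{perfect=dualizable} together with the local nature of dualizability.
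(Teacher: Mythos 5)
Your first direction (dualizable $\Rightarrow$ compact) is fine and is essentially the paper's argument: perfection of $p$ and of $Y$ make $\Gamma(X,-)\simeq \Gamma(Y,-)\circ p_*$ colimit-preserving, so $\cO_X$ is compact and hence every dualizable object is. The reduction in the other direction is also set up correctly through the point where $i^*M$ is dualizable on $X_U$: the base change $X_U=X\times_Y U$ is perfect because $p$ is a perfect morphism, $i$ is affine because $Y$ has affine diagonal, so $i_*$ preserves colimits and $i^*$ preserves compacts.

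The gap is in your final step. The ``locality'' of dualizability invoked before Proposition \ref{perfect=dualizable} is locality with respect to the tautological presentation $\qc(X)\simeq \lim_{\Spec A\to X}\Mod_A$: one glues a dual once one knows that $\eta^*M$ is dualizable for \emph{every} affine $\eta:V\to X$ (compatibility then being automatic since duals commute with pullback). You instead know dualizability only after pullback along the \emph{single} chosen cover $i:X_U\to X$, and an arbitrary affine $V\to X$ need not factor through $X_U$. To glue along that one cover you would need $\qc(X)$ to satisfy descent along $X_U\to X$, but this map is only affine and surjective — it is not assumed flat, \'etale, or to satisfy descent (elsewhere the paper explicitly adds ``satisfies descent'' as a hypothesis when it is needed), so the gluing step as stated is unjustified. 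The paper bridges exactly this point: for an arbitrary affine $f:V\to X$ it forms the affine $U\times_Y V$ (affine diagonal of $Y$), notes that $U\times_Y V\to V$ is surjective, observes that the pullback of $f^*M$ along this map agrees with the pullback of the perfect object $M_U$ and is therefore perfect, and then descends perfection along the surjective map of affines to conclude $f^*M$ is perfect; since this holds for every affine over $X$, $M$ is perfect by Definition \ref{perfect def} and hence dualizable by Proposition \ref{perfect=dualizable}. Your proof needs this intermediate argument (or some substitute for descent of perfection to arbitrary affines over $X$); as written, the last sentence asserts the conclusion rather than proving it.
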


\begin{proof}
First note that pushforward $p_*$ along the perfect morphism $p$ is
colimit preserving, hence (by adjunction) the pullback $p^*M$ of a
compact object $M\in \qc(Y)$ is compact. In particular we find that
the structure sheaf (the monoidal unit) on $X$ is compact, and hence
that all dualizable objects are compact. Note also that the pullback
of a dualizable object is always dualizable.

Now suppose that $U\to Y$ is any affine mapping to $Y$, and consider
the base change $X_U$ of $X$ to $U$. By the definition of a perfect
morphism applied to $p$, this base change is itself a perfect
stack. Let $q:X_U\to X$ denote the base change morphism, which is
affine since $Y$ has affine diagonal. If $M\in \qc(X)$ is any compact
object and $M_U=q^* M$ is its pullback to $U$, then $M_U$ is itself
compact since $q_*$ preserves colimits:
\begin{eqnarray*}
\Hom(M_U, \colim A_i) &\simeq& \Hom(q^* M, \colim A_i)\\
&\simeq & \Hom(M, q_* \colim A_i)\\
&\simeq & \Hom(M, \colim q_* A_i)\\
&\simeq & \colim \Hom(M, q_* A_i)\\
&\simeq & \colim \Hom(M_U, A_i).
\end{eqnarray*}

Since $U$ itself is perfect, it follows that $M_U$ is dualizable and
(by Proposition~\ref{perfect=dualizable}) perfect. We now show that the pullback of $M$ to any affine is perfect. Assume that the map $U\ra Y$ is surjective, so as a consequence $X_U \ra X$ is also surjective. Now let $f: V\ra X$ be any affine mapping to $X$. We may form the Cartesian diagrams:
$$\xymatrix{
U\times_Y V\ar[r]^{f'}\ar[d]^{q'}& X_U\ar[d]^q\ar[r]&U\ar[d]\\
U\ar[r]^f&X\ar[r] &Y\\}$$\noindent
We now verify that $f^* M$ is perfect, given the hypotheses above. The fiber product $U\times_Y V$ is affine, since $Y$ has affine diagonal, and the map $q'$ is surjective since $q$ is. Since $M_U$ is perfect, $f'^*M_U \simeq q'^*f^* M$ is perfect, as well. Thus, in summary, we know that $q'^*f^*M = \cO(V\times_Y U)\ot_{\cO(U)} f^*M$ is perfect, where $q'$ is surjective, and hence $f^* M$ is perfect.
Since the pullback of $M$ to any affine is perfect, therefore $M$ is
itself perfect by Definition \ref{perfect def} and hence (again
by Proposition~\ref{perfect=dualizable}) dualizable.
\end{proof}
%

\begin{prop}\label{ample family} Let $Y$ be a perfect stack and $p:X\to
Y$ a relative quasi-compact derived scheme with affine diagonal and a relatively ample
family of line bundles (for example, $p$ quasi-projective, or in particular affine). Then $X$ is perfect.
\end{prop}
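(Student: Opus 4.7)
My plan is to verify the two defining conditions of a perfect stack: that $X$ has affine diagonal, and that $\qc(X) \simeq \Ind\Perf(X)$. The latter I will establish by exhibiting a compact generating set whose objects are dualizable, so that the two notions of finiteness coincide and the $\Ind$-completion description follows.

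First I would dispose of the affine diagonal: factor $\Delta_X$ as $X \to X \ti_Y X \to X \ti X$. The first map is the relative diagonal of $p$, affine by hypothesis; the second is the base change of $\Delta_Y$ along $X \ti X \to Y \ti Y$, and so is affine since $Y$ is perfect. Hence $\Delta_X$ is affine. Next I would observe that $p$ is a perfect morphism: for any affine $U \to Y$, the base change $X \ti_Y U$ is a quasi-compact derived scheme with affine diagonal (both properties being stable under base change), hence perfect by Proposition \ref{Neeman redux}. Lemma \ref{compact=dualizable} then shows that compact and dualizable objects of $\qc(X)$ coincide, and Proposition \ref{Thomason pushforwards} gives that $p_*$ preserves colimits and satisfies base change and the projection formula.

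For compact generation, let $\{E_i\}$ be a set of compact (equivalently dualizable, equivalently perfect) generators of $\qc(Y)$, and let $\{L_j\}$ be the relatively ample family of line bundles on $X$. The objects $p^*E_i \ot L_j^{\ot n}$ for $n \in \ZZ$ are dualizable, as tensor products of a pullback of a dualizable object with a (dualizable) line bundle, and so compact by the preceding step. To see they generate, suppose $M \in \qc(X)$ is right-orthogonal to all of them. Applying tensor-hom adjunction together with the $(p^*, p_*)$ adjunction,
\[
0 \simeq \Hom_{\qc(X)}(p^*E_i \ot L_j^{\ot n}, M) \simeq \Hom_{\qc(Y)}\bigl(E_i, p_*(L_j^{\ot -n} \ot M)\bigr),
\]
so generation by $\{E_i\}$ forces $p_*(L_j^{\ot -n} \ot M) \simeq 0$ for every $j$ and $n$. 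Restricting along an arbitrary affine $U = \Spec A \to Y$ and invoking base change for the perfect morphism $p$, this becomes the vanishing of $(p_U)_*\bigl((L_j|_{X_U})^{\ot -n} \ot M|_{X_U}\bigr)$ for all $n, j$. Since $X_U$ is a quasi-compact derived scheme with affine diagonal carrying the ample family $\{L_j|_{X_U}\}$, the derived version of the classical ample-generation statement forces $M|_{X_U} \simeq 0$; ranging over all affines over $Y$ and using $\qc(X) \simeq \lim_{U\in\Aff/X}\qc(U)$, this gives $M \simeq 0$.

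The main obstacle is that last step: upgrading the classical statement that an ample family generates the derived category of a quasi-compact separated scheme to the derived-scheme setting. This is essentially an enhancement of the Thomason-Neeman-Bondal-Van den Bergh argument already invoked in the proof of Proposition \ref{Neeman redux}, and follows by the same \v Cech/spectral-sequence reductions combined with the observation that on an affine piece the $L_j^{\ot n}$ generate the corresponding module category. Modulo this ingredient, the chain of equivalences and adjunctions above yields that $X$ is perfect.
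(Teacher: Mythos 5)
Your argument is correct and follows essentially the same route as the paper: Lemma \ref{compact=dualizable} to identify compact and dualizable objects of $\qc(X)$, then compact generation by objects of the form $p^*E\ot\cL$, with the same adjunction chain reducing right-orthogonality to the vanishing of $p_*\intHom(\cL,N)$ for $\cL$ in the ample family, concluded by relative ampleness. The step you flag as the remaining ingredient (the derived enhancement of ample-family generation, checked affine-locally over $Y$) is exactly the point the paper dispatches in one line --- ``since the objects $\cL$ form a relatively ample family this forces $N=0$'' --- so your treatment is, if anything, slightly more explicit.
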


\begin{proof}
By Lemma \ref{compact=dualizable}, we know that compact and
dualizable objects in $\qc(X)$ coincide. Thus we need only to check
that $\qc(X)$ is compactly generated. The pullbacks $p^*M$ of
compact (hence dualizable) objects on $Y$ are dualizable, hence
compact, as are the line bundles $\cL$ in the given relatively ample
family. We claim the $\infty$-category of compact objects $p^*M\ot\cL$
generates $\qc(X)$. The argument is as above in the case of an
external product: let $N$ be right orthogonal to the compact
objects, so that $\Hom(p^*M\ot \cL,N)=0$. We first find by
adjunction and the fact that $Y$ is compactly generated that
$p_*\intHom(\cL,N)=0$. Since the objects $\cL$ form a relatively ample
family of line bundles this forces $N=0$.
\end{proof}

\begin{cor}\label{quasiproj}
In
characteristic zero,
the quotient $X/G$ of a quasi-projective derived scheme $X$ by a linear
action of an affine algebraic group $G$ is perfect.
\end{cor}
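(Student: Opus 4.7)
The plan is to reduce to the reductive case via the Borel associated bundle construction, so that Proposition~\ref{ample family} can be applied with the base $BGL_n$. The main obstacle will be verifying that the induced space is quasi-projective with a $GL_n$-linearized ample line bundle, and that $BGL_n$ is perfect in characteristic zero.

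Since $G$ is affine algebraic, Chevalley's theorem provides a closed embedding $G \hookrightarrow GL_n$. I form the associated space
\[
Y = X \times^G GL_n = (X \times GL_n)/G,
\]
where $G$ acts antidiagonally by $g \cdot (x, h) = (gx, hg^{-1})$. Since $G$ acts freely on the $GL_n$-factor, the quotient $Y$ is a derived scheme, and the residual left $GL_n$-action (by left translation on the second factor) descends to $Y$, giving a canonical equivalence $Y/GL_n \simeq X/G$. Since $G \subset GL_n$ is closed, $GL_n/G$ is quasi-projective (via a Chevalley embedding $GL_n/G \hookrightarrow \PP(V)$ for some $GL_n$-representation $V$), and the projection $Y \to GL_n/G$ is a Zariski-locally trivial $X$-bundle, so $Y$ is quasi-projective. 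Combining the $G$-linearized ample bundle on $X$ with a large power of the $GL_n$-equivariant ample bundle on $GL_n/G$ pulled back from $\cO_{\PP(V)}(1)$ produces a $GL_n$-linearized ample line bundle on $Y$. Hence the projection $q: Y/GL_n \to BGL_n$ is representable by the quasi-projective scheme $Y$ and carries a $q$-ample line bundle.

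It remains to check that $BGL_n$ is perfect. In characteristic zero $GL_n$ is reductive, so the invariants functor $(-)^{GL_n} = \Gamma(BGL_n,-)$ is exact and commutes with colimits; hence $\cO_{BGL_n} \simeq k$ is compact in $\qc(BGL_n) = \Rep(GL_n)$. Every finite-dimensional representation $V$ is dualizable, and $\Hom(V,-) \simeq (V^\vee \otimes -)^{GL_n}$ preserves colimits, so $V$ is compact as well. These finite-dimensional representations generate $\Rep(GL_n)$ under colimits, and the diagonal of $BGL_n$ is affine since $GL_n$ is affine. With $BGL_n$ established as a perfect stack, Proposition~\ref{ample family} applied to $q$ yields that $X/G \simeq Y/GL_n$ is perfect.
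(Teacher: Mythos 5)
Your overall strategy is viable and genuinely different from the paper's, but as written the key new step has a real flaw. The paper never forms the associated bundle $X\times^G GL_n$: it factors the problem as two applications of Proposition~\ref{ample family}, first to $BG\to BGL_n$ (whose fiber is $GL_n/G$, quasi-projective by Chevalley) to conclude $BG$ is perfect, and then to $X/G\to BG$ (relatively quasi-projective because the action is linear). The only schemes whose quasi-projectivity is ever invoked are $X$ itself and $GL_n/G$. Your route instead collapses this into a single application of Proposition~\ref{ample family} to $X/G\simeq (X\times^G GL_n)/GL_n \to BGL_n$, at the price of having to prove that $Y=X\times^G GL_n$ is a quasi-projective derived scheme with a suitable ample bundle --- which is exactly the step the paper's factorization through $BG$ is designed to avoid.

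The flaw is in your justification of that step. The claim that $Y\to GL_n/G$ is a \emph{Zariski}-locally trivial $X$-bundle is false for general affine $G$: the torsor $GL_n\to GL_n/G$ is only \'etale (fppf) locally trivial unless $G$ is special in Serre's sense, and typical examples ($G$ finite, $O_n$, $PGL_n$) are not. Freeness of the $G$-action likewise only tells you the quotient stack $(X\times GL_n)/G$ has trivial inertia, i.e.\ is a derived algebraic space, not yet a scheme. Both scheme-ness and quasi-projectivity of $Y$ have to come from the linearization: use the $G$-linearized ample bundle on $X$ (equivalently, a $G$-equivariant locally closed embedding $X\hookrightarrow \PP(V)$ into the projectivization of a $G$-representation) to embed $Y$ into $\PP(V)\times^G GL_n$, which is the projectivization of a vector bundle over the quasi-projective $GL_n/G$ (vector bundles do descend along the fppf torsor), and hence quasi-projective; alternatively, descend the linearized relatively ample bundle along $X\times GL_n\to Y$ and twist by the pullback of an ample bundle from $GL_n/G$, as you sketch in your last sentence of that paragraph --- but that construction must carry the argument, rather than the local-triviality claim. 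With that repair (and a word that the descended bundle is $GL_n$-equivariant, so it defines a relatively ample family for $Y/GL_n\to BGL_n$), your single application of Proposition~\ref{ample family} goes through; your verification that $BGL_n$ is perfect in characteristic zero agrees with the paper's.
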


\begin{proof}
First note that in
characteristic zero,
 $BGL_n$ is clearly perfect: the compact and dualizable objects are both
finite dimensional representations which generate. 

If $G$ is an
affine algebraic group, we can embed $G\hookrightarrow GL_n$ as a
subgroup of $GL_n$ for some $n$.  Thus we obtain a morphism $BG \to
BGL_n$ with fiber $GL_n/G$. 
By a theorem of Chevalley \cite{chevalley}, $GL_n/G$ is a quasi-projective variety,
and so by
Proposition \ref{ample family}, $BG$ itself is perfect. 

Finally,
for a quasi-projective derived scheme $X$ with a linear action of $G$, the
morphism $X/G\to BG$ is quasi-projective, so applying Proposition \ref{ample family} again,
we conclude that $X/G$ is perfect.
\end{proof}

\begin{cor}\label{airated} Let $p:X\to Y$ be a morphism between perfect stacks. Then $p$ is perfect.
\end{cor}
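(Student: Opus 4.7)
The plan is to unwind the definition of a perfect morphism: I must show that for every affine derived scheme $U \to Y$, the derived fiber product $X \times_Y U$ is itself a perfect stack. The key observation will be that the projection $q : X \times_Y U \to X$ is an affine morphism, at which point the statement reduces to Proposition \ref{ample family} applied to the perfect stack $X$.

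To see that $q$ is affine, I would factor the natural comparison map $X \times_Y U \to X \times U$ as the base change of the diagonal $\Delta_Y : Y \to Y\times Y$ along $X \times U \to Y \times Y$. Since $Y$ is perfect, $\Delta_Y$ is affine by assumption, hence so is $X \times_Y U \to X \times U$. Composing with the projection $X \times U \to X$, which is affine because $U$ is affine, we conclude that $q : X \times_Y U \to X$ is affine. In particular $X \times_Y U$ inherits affine diagonal from the affine diagonal of $X$ (the relative diagonal of an affine morphism is affine, and composition with $\Delta_X$ pulled back along $X \times_Y U \times X \times_Y U \to X\times X$ gives a composition of affine maps).

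Now Proposition \ref{ample family}, applied with base $X$ (which is perfect) and relative derived scheme $q : X \times_Y U \to X$, produces the desired conclusion: an affine morphism is in particular a quasi-compact relative derived scheme with affine diagonal and a (trivially) relatively ample family of line bundles, so $X \times_Y U$ is perfect. Since this holds for every affine $U \to Y$, the morphism $p : X \to Y$ is perfect, as required.

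The main conceptual point (and essentially the only step of substance) is the affineness of $q$, which is where the assumption that $Y$ is perfect — and hence has affine diagonal — enters in an essential way. Once that is in hand, everything else is a formal appeal to the already-established Proposition \ref{ample family}, so I do not expect any genuine obstacle beyond this base-change manipulation.
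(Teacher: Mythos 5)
Your proposal is correct and follows essentially the same route as the paper: both reduce to observing that the base-change map $X\times_Y U\to X$ is affine (using the affine diagonal of the perfect stack $Y$) and then invoke Proposition \ref{ample family} over the perfect base $X$ in the special case of an affine morphism. Your factorization through $X\times U$ is just a slightly more explicit way of verifying the affineness that the paper asserts directly.
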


\begin{proof}
Let $f:A\to Y$ be a morphism from an affine derived scheme to the
base, and $X_A\to A$ the base change of $X$. Since $Y$ has affine
diagonal, $f$ is affine as is the base change morphism $f_X:X_A\to
X$. In particular Proposition \ref{ample family} (again in the basic
case of an affine morphism) applies to $f_X$, so that the total
space $X_A$ is perfect.
\end{proof}

\begin{prop}\label{Thomason product} The product $X=X_1\times X_2$ of perfect stacks
is perfect. More generally, for maps $p_i:X_i\to Y$, if $Y$ has affine diagonal,
 then $X_1\times_Y X_2$ is perfect.
\end{prop}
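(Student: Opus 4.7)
The plan is to first establish the absolute product case $X_1 \times X_2$, then reduce the general relative case to it using the hypothesis that $Y$ has affine diagonal. For the reduction, since $\Delta_Y : Y \to Y \times Y$ is affine, the natural map $X_1 \times_Y X_2 \to X_1 \times X_2$, obtained as its base change along $X_1 \times X_2 \to Y \times Y$, is likewise affine. Granting that $X_1 \times X_2$ is perfect, Proposition \ref{ample family} applied to this affine morphism over the perfect base $X_1 \times X_2$ then yields that $X_1 \times_Y X_2$ is perfect.

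For the absolute product, affineness of the diagonal is immediate from $\Delta_{X_1 \times X_2} \simeq \Delta_{X_1} \times \Delta_{X_2}$. I would show that the external tensor products $M_1 \boxtimes M_2 := p_1^* M_1 \otimes p_2^* M_2$ for $M_i \in \Perf(X_i)$ form a set of compact generators of $\qc(X_1 \times X_2)$, where $p_i$ denote the two projections. They are perfect, hence dualizable, since perfectness is preserved under pullback and tensor product (Proposition \ref{perfect=dualizable}). They are also compact: the projection $p_2$ is a perfect morphism, because its fiber $X_1 \times V$ over an affine $V \to X_2$ is perfect by Proposition \ref{ample family} (being affine over the perfect stack $X_1$). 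Hence $p_{2*}$ is continuous by Proposition \ref{Thomason pushforwards}, and the adjunction
\[
\Hom(p_1^* M_1 \otimes p_2^* M_2, N) \simeq \Hom(M_2, p_{2*}(p_1^* M_1^\vee \otimes N))
\]
together with compactness of $M_2$ exhibits $M_1 \boxtimes M_2$ as compact.

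The main obstacle will be proving that these objects generate. Suppose $N \in \qc(X_1 \times X_2)$ is right-orthogonal to every $M_1 \boxtimes M_2$. The adjunction above, together with the compact generation of $\qc(X_2)$ by $\Perf(X_2)$, forces $p_{2*}(p_1^* M_1^\vee \otimes N) \simeq 0$ for all $M_1 \in \Perf(X_1)$. Since $\Perf(X_1)$ is stable under duality and $\qc(X_1) \simeq \Ind\Perf(X_1)$, and since the functor $K \mapsto p_{2*}(p_1^* K \otimes N)$ preserves colimits in $K$ (all three constituent operations do), this extends to the vanishing $p_{2*}(p_1^* K \otimes N) \simeq 0$ for every $K \in \qc(X_1)$.

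To conclude, I would specialize to $K = (i_U)_* \cO_U$ for an affine $i_U : U \to X_1$. Writing $\tilde i_U : U \times X_2 \to X_1 \times X_2$ for the base change, standard base change gives $p_1^*(i_U)_* \cO_U \simeq (\tilde i_U)_* \cO_{U \times X_2}$, and the projection formula together with the identity $p_2 \circ \tilde i_U = \pi_{X_2}$ then yields $\pi_{X_2,*}(N|_{U \times X_2}) \simeq 0$ in $\qc(X_2)$, where $\pi_{X_2} : U \times X_2 \to X_2$ is the projection. Restricting further to an affine $i_V : V \to X_2$ and applying base change once more, one obtains $\pi_{V*}(N|_{U \times V}) \simeq 0$, where $\pi_V : U \times V \to V$ is the projection. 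Since $U \times V$ is affine, this forces $N|_{U \times V} \simeq 0$ for every choice of affines $U \to X_1$ and $V \to X_2$, whence descent for $\qc$ concludes that $N \simeq 0$.
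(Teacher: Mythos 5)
Your proposal is correct and follows essentially the same route as the paper: reduce the relative case to the absolute one via affineness of $X_1\times_Y X_2 \to X_1\times X_2$, show external products of perfect objects are compact using that the projection to a factor is a perfect morphism, and prove generation by an adjunction argument that reduces to vanishing on affines $U\times V$ and then to a cover/descent argument. The only step you leave implicit is passing from ``compactly generated by perfect objects'' to ``perfect'': the paper handles this up front via Lemma \ref{compact=dualizable} (compact and dualizable objects of $\qc(X_1\times X_2)$ coincide), whereas in your setup it follows because the unit $\cO_{X_1}\boxtimes\cO_{X_2}$ is among your compact generators and every compact object is a retract of a finite colimit of them.
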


\begin{proof}
The second assertion follows from the first and Proposition
\ref{ample family}, since $X_1\times_Y X_2\to X_1\times X_2$ is an
affine morphism for $Y$ with affine diagonal.

To prove the first assertion, note that $X=X_1\times X_2$ has affine diagonal. 
Applying Lemma \ref{compact=dualizable} to the
projection to a factor, we see that compact and dualizable objects
of $\qc(X)$ coincide. Thus to confirm that $X$ is perfect, it suffices
to show that $\qc(X)$ is compactly generated.

Let us first check that the external product
of compact objects is again compact. By assumption, compact objects
$M_i$ on the factors $X_i$ are dualizable, so $M_1\boxtimes M_2$ is
dualizable (it is the tensor product of pullbacks, and both
operations preserve dualizabie objects), and hence compact.

Now let us check that external products of compact objects generate
$\qc(X)$. The argument is a modification of the argument of
Bondal-Van den Bergh \cite{BvdB} in the case of a single compact
generator. Namely, let $N$ be right orthogonal to $\qc(X)^c$, so
that in particular $\Hom(M_1\boxtimes M_2,N)\simeq 0$ for all
$M_i\in \qc(X_i)^c$. By adjunction, we have
\begin{eqnarray*} 0 &\simeq & \Hom(M_1\boxtimes M_2, N)\\
&\simeq &\Hom(\pi_1^*M_1,\intHom(\pi_2^*M_2,N))\\
&\simeq &\Hom(M_1,\pi_{1,*}\intHom(\pi_2^*M_2,N))
\end{eqnarray*}
for all $M_1,M_2$, so that $\pi_{1*}\intHom(\pi_2^*M_2,N)\simeq 0$
since such $M_1$ generate $\qc(X_1)$. For any affines $U\to X_1$ and
$V\to X_2$, we therefore have
\begin{eqnarray*} 0 &\simeq & \Gamma(U,\pi_{1,*}\intHom(\pi_2^*M_2,N))\\
&\simeq & \Hom_{U\ti X_2}(\pi_2^* M_2, N)\\
&\simeq & \Hom_{X_2}(M_2, (\pi_2|_{U\ti X_2})_*N)
\end{eqnarray*}
for all $M_2$. Since the latter objects generate $\qc(X_2)$ it
follows (upon restricting to $V$) that $\Gamma(U\times V, N)\simeq
0$, whence (by affineness of $U\ti V$) that $N|_{U\times V}\simeq
0$, and finally (since affines of the form $U\times V$ cover $X_1\ti
X_2$) that $N\simeq 0$.
\end{proof}

\begin{cor}\label{cor finite mapping stacks have air}
Let $X$ be a perfect stack, and let $\Sigma$ be a finite simplicial
set. Then the mapping stack $X^\Sigma = \Map(\Sigma, X)$ is perfect.
\end{cor}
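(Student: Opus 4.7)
The plan is to induct on the number of nondegenerate simplices of $\Sigma$, using two key facts: the derived mapping stack functor $\Map(-,X)$ sends colimits of simplicial sets to limits of derived stacks (and depends only on the homotopy type of $\Sigma$ since $\Sigma$ is regarded as a constant stack), and perfect stacks are closed under fiber products over bases with affine diagonal (Proposition~\ref{Thomason product}).

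For the base case, when $\Sigma$ is empty, $X^\Sigma$ is a point, and when $\Sigma$ is a single point, $X^\Sigma \simeq X$ is perfect by hypothesis. For a disjoint union $\Sigma = \Sigma_1 \sqcup \Sigma_2$, the mapping stack decomposes as $X^\Sigma \simeq X^{\Sigma_1} \times X^{\Sigma_2}$, which is perfect by Proposition~\ref{Thomason product}.

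For the main inductive step, I would express $\Sigma$ as a pushout
$$\Sigma \simeq \Sigma' \sqcup_{\partial \Delta^n} \Delta^n$$
obtained by attaching a single top-dimensional nondegenerate cell, so that both $\Sigma'$ and $\partial \Delta^n$ have strictly fewer nondegenerate simplices than $\Sigma$. Applying $\Map(-,X)$ turns this colimit into a limit, and using contractibility of $\Delta^n$ to identify $X^{\Delta^n} \simeq X$, we obtain
$$X^\Sigma \simeq X^{\Sigma'} \times_{X^{\partial \Delta^n}} X.$$
The inductive hypothesis, applied to $\Sigma'$ and $\partial \Delta^n$, guarantees that $X^{\Sigma'}$ and $X^{\partial \Delta^n}$ are perfect stacks and hence have affine diagonal. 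Proposition~\ref{Thomason product} then concludes that the fiber product $X^\Sigma$ is perfect.

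The main technical point to verify is that the hypothesis of Proposition~\ref{Thomason product}, namely that the base stack $X^{\partial \Delta^n}$ has affine diagonal, is indeed maintained throughout the induction; but this is automatic since affine diagonal is part of the definition of perfect stack. A secondary point requiring care is the homotopy invariance of $\Map(-,X)$ in the simplicial variable, which lets us replace $\Delta^n$ by a point and freely choose an attaching decomposition; this follows from the general behavior of mapping stacks out of constant simplicial objects. Neither difficulty is serious, so the argument is essentially a combinatorial bookkeeping reduction to the closure property established in Proposition~\ref{Thomason product}.
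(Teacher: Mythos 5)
Your overall strategy (cell attachment plus closure of perfect stacks under fiber products over bases with affine diagonal) is sound, but the induction as you set it up does not run: you claim that both $\Sigma'$ and $\partial\Delta^n$ have strictly fewer nondegenerate simplices than $\Sigma$, and you then apply the inductive hypothesis to $\partial\Delta^n$. This is false in general, because $\partial\Delta^n$ is not a subcomplex of $\Sigma$ (the attaching map need not be injective) and it always has $2^{n+1}-2$ nondegenerate simplices. For example, take $\Sigma=\Delta^n/\partial\Delta^n$, the minimal simplicial model of $S^n$: it has exactly two nondegenerate simplices, while $\partial\Delta^n$ has six already for $n=2$. So the inductive hypothesis cannot be invoked for $X^{\partial\Delta^n}$ at that stage. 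The gap is repairable by a more careful induction scheme: induct first on the dimension of $\Sigma$ (the maximal dimension of a nondegenerate simplex) and then, for fixed dimension, on the number of top-dimensional nondegenerate simplices. Since $\partial\Delta^n$ has dimension $n-1$ and $\Sigma'$ has one fewer top cell, both are covered, and the rest of your argument (homotopy invariance of $\Map(-,X)$ in the simplicial variable, $X^{\Delta^n}\simeq X$, and the fiber-product case of Proposition~\ref{Thomason product}, whose affine-diagonal hypothesis on the base is supplied by perfection of $X^{\partial\Delta^n}$) goes through as you describe.

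For comparison, the paper's proof avoids induction on cells altogether: writing $\Sigma_0$ for the set of $0$-simplices, the restriction map $X^\Sigma\to X^{\Sigma_0}$ is an affine morphism because $X$ has affine diagonal; the target $X^{\Sigma_0}$ is a finite product of copies of $X$, hence perfect by Proposition~\ref{Thomason product}; and an affine (more generally quasi-projective) morphism over a perfect base has perfect total space by Proposition~\ref{ample family}. This one-step argument trades your cell-by-cell bookkeeping for the single geometric observation that the whole mapping stack is affine over the product indexed by the vertices, which is why no induction on the homotopy type of $\Sigma$ is needed there.
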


\begin{proof}
Let $\Sigma_0$ be the $0$-simplices of $\Sigma$. Since $X$ has
affine diagonal, the natural projection $X^\Sigma\to X^{\Sigma_0}$
is affine. By Proposition~\ref{Thomason product}, the product
$X^{\Sigma_0}$ is perfect, and so by Proposition~\ref{ample family} (in
the basic case of an affine morphism), the assertion follows.
\end{proof}

Finally, we consider arbitrary quotients by finite group schemes in
good characteristics.

\begin{prop} Let $X$ be a perfect stack with an action of an affine 
group scheme $G$ for which

\begin{enumerate}
\item The global functions $\Gamma(G, \cO_G)$ is a perfect
  complex. 
\item The unit $\cO_{BG}$ on $BG$ is compact
  (equivalently, the trivial $G$-module is perfect).
\end{enumerate}

Then $X/G$ is perfect.
\end{prop}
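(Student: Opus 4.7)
The plan is to use faithfully flat descent along the quotient map $p:X\to X/G$ to realize $\qc(X/G)$ as modules over an algebra object in $\qc(X)$, and then deduce both compact generation and the identification of compact with dualizable objects from hypotheses (1) and (2).

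First, $X/G$ has affine diagonal: base-changing the diagonal along the affine surjection $X\times X\to X/G\times X/G$ yields the morphism $G\times X\to X\times X$, $(g,x)\mapsto (x,gx)$, which is affine because $G$ is affine and $X$ has affine diagonal. The quotient map $p:X\to X/G$ is itself perfect, since its base change along any affine $\Spec R\to X/G$ is a $G$-torsor over $\Spec R$, which is an affine derived scheme (hence a perfect stack) as $G$ is affine. By Proposition~\ref{Thomason pushforwards}, $p_*$ preserves colimits and satisfies base change. Since $p$ is faithfully flat, $p^*$ is conservative, and hence by the $\oo$-categorical Barr-Beck theorem $p^*$ is comonadic: $\qc(X/G)\simeq\mathrm{coMod}_T(\qc(X))$ for the comonad $T=p^*p_*$. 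Base change on $X\times_{X/G}X\simeq G\times X$ identifies $T\cF\simeq C\otimes\cF$, where $C=\Gamma(G,\cO_G)$. By hypothesis (1), $C$ is perfect and hence dualizable in $\Mod_k$, so comodules over $C$ are equivalent to modules over the dual algebra $A=C^\vee$: we obtain $\qc(X/G)\simeq \Mod_A(\qc(X))$, with $p^*$ becoming the forgetful functor, which further admits a left adjoint $p_!:\cF\mapsto A\otimes\cF$, the free-module functor.

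Next, I claim compact generation follows. For any compact generator $\cF\in\Perf(X)$ of $\qc(X)$, the free module $p_!\cF=A\otimes\cF$ is compact in $\qc(X/G)$, since $\Hom_{\qc(X/G)}(p_!\cF,M)=\Hom_{\qc(X)}(\cF,p^*M)$ preserves colimits in $M$; these objects generate by conservativity of $p^*$. They are also dualizable, because their pullback $A\otimes\cF$ to $X$ is a tensor of perfect complexes, and dualizability is a local condition (Proposition~\ref{perfect=dualizable}). Since dualizable objects are closed under finite colimits and retracts, every compact object of $\qc(X/G)$, being a summand of a finite iterated extension of generators, is dualizable.

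The main obstacle will be the reverse inclusion, that every dualizable $M$ is compact. Via the identification $\Hom(M,-)\simeq\Gamma(X/G,M^\vee\otimes -)$, this reduces to showing that $\cO_{X/G}$ is compact, i.e. that $\Gamma(X/G,-)$ preserves colimits. Under $\qc(X/G)\simeq\Mod_A(\qc(X))$, $\cO_{X/G}$ corresponds to $\cO_X$ with the trivial $A$-action, and the global sections functor factors as
\[
\Gamma(X/G,-)\simeq \Hom_A(k,\Gamma(X,-)):\Mod_A(\qc(X))\to\Mod_A(\Mod_k)\to\Mod_k.
\]
The functor $\Gamma(X,-)$ preserves colimits because $X$ is perfect, so $\cO_X$ is compact on $X$. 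The functor $\Hom_A(k,-)$ preserves colimits precisely by hypothesis (2), which asserts compactness of the trivial module $k$ in $\qc(BG)\simeq\Mod_A(\Mod_k)$. Composing the two yields the compactness of $\cO_{X/G}$, completing the proof that $X/G$ is perfect.
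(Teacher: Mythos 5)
Your overall route is genuinely different from the paper's and, in outline, viable: the paper works on the other side of the affine map $f:X\to X/G$, identifying $\qc(X)\simeq \Mod_{f_*\cO_X}(\qc(X/G))$, producing compact dualizable generators as pushforwards of compacts from $X$ (using the right adjoint $f^+$ and the unit section of $G$ for conservativity), and then using hypothesis (2) to show $BG$ is perfect and invoking Lemma~\ref{compact=dualizable} for $X/G\to BG$. You instead descend along $p:X\to X/G$ to write $\qc(X/G)\simeq \Mod_{C^\vee}(\qc(X))$ with $C=\Gamma(G,\cO_G)$, get compact generators from the induction $p_!$, and get compactness of $\cO_{X/G}$ by factoring global sections through $BG$ and quoting (2). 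This trade is attractive (conservativity of $p^*$ is easier to see than conservativity of $f^+$), but the key equivalence is exactly where your proof has a genuine gap.

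You justify it by ``$p$ is faithfully flat, so $p^*$ is conservative, hence comonadic by Barr--Beck.'' First, faithful flatness is not among the hypotheses: $G$ is only an affine group scheme with $\Gamma(G,\cO_G)$ perfect, and no flatness or smoothness is assumed (conservativity of $p^*$ does hold, but because $\qc(X/G)\simeq \lim \qc(G^{\times\bullet}\times X)$ and every level restricts through level zero, not because of flatness). Second, and more seriously, the comonadic Barr--Beck theorem requires, beyond conservativity, that $p^*$ preserve totalizations of $p^*$-split cosimplicial objects; this is not automatic for a colimit-preserving functor, and it is precisely where the content lies. The clean way to obtain it is to produce a left adjoint $p_!$ to $p^*$ --- but the existence of $p_!$ is itself a consequence of hypothesis (1) (for $G=\mathbb{G}_a$ in characteristic zero the forgetful functor $\qc(BG)\to \Mod_k$ does not even preserve infinite products), so invoking the comonadic description before using (1) is circular as written. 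The repair stays inside the paper's toolkit: all maps in the \v Cech nerve $G^{\times\bullet}\times X$ of $p$ are affine, so one can argue as in the proof of Theorem~\ref{Thomason tensor}, passing between ${\mc Pr}^{\rm L}$ and ${\mc Pr}^{\rm R}$ to see that the projection $p^*$ from the totalization admits a left adjoint, is conservative and colimit preserving, hence monadic; then base change, the projection formula, and the dualizability of $C$ from (1) identify the monad $p^*p_!$ with $C^\vee\otimes_k(-)$. Two smaller points: your appeal to Proposition~\ref{perfect=dualizable} to see that $p_!\cF$ is dualizable needs a word, since that proposition tests dualizability on affine charts while you test it after pullback along $p$; the fix is the same gluing-of-duals argument, run over the \v Cech nerve of $p$. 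And the assertion that a $G$-torsor over an affine base is an affine derived scheme (so that $p$ is perfect and Proposition~\ref{Thomason pushforwards} applies) is itself a descent claim deserving a sentence, though the paper elides the analogous point for $f$ and $g$.
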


\begin{proof}
We leave to the reader the exercise of checking that $X/G$ has affine diagonal since $X$ has affine diagonal
and $G$ is affine.

We will first prove that condition (1) implies $\qc(X/G)$ is generated
by compact dualizable objects.  
Since $f:X\to X/G$ is affine, we have the identification $\qc(X) \simeq \Mod_{ f_*\cO_X}(\qc(X/G))$.

We claim that the algebra object $ f_*\cO_X$ is perfect (or equivalently, dualizable).
To see this, consider the pullback square of
derived stacks
$$ \xymatrix{ G\ar[r]^p\ar[d]_p&\Spec k\ar[d]^g\\ \Spec k \ar[r]^g&
  BG\\}$$
Via base change, we obtain an equivalence $g^*g_*\cO_k\simeq
p_*p^*\cO_{k}$, or in other words, an equivalence of $k$-algebras
$g^*g_*\cO_k \simeq \Gamma(G, \cO_G)$.  By assumption, $\Gamma(G,
\cO_G)$ is a perfect complex, and $g^*$ is conservative and preserves
perfect complexes, so we conclude that the pushforward $g_*\cO_k$ is
perfect.
Now consider the pullback square of derived stacks
$$
\xymatrix{
X\ar[r]^f\ar[d]_{q'}&X/G\ar[d]^q\\
\Spec k \ar[r]^g& BG\\}$$
Since $g_* \cO_k$ is perfect, $q^*g_*\cO_k$ is perfect. By base
change, we have the equivalence $q^*g_*\cO_k \simeq
f_*q'^*\cO_k$, and thus we conclude that $f_*\cO_X$ is perfect.

Next observe that the right adjoint $f^+$ to the pushforward $f_*$ can be calculated explicitly by
$$
f^+(M) \simeq \intHom_{\cO_{X/G}}(f_*\cO_X, M) \simeq M \ot_{\cO_{X/G}} (f_*{\cO_X})^\vee.
$$
It follows immediately that $f^+$ preserves colimits.
It also follows that $f^+$ is conservative since a diagram chase with the above identities leads to the identity
$$
f^*( M \ot_{\cO_{X/G}} (f_*{\cO_X})^\vee) \simeq f^*(M) \ot_{\cO_{X}} q'^*g^*((g_*{\cO_k})^\vee)
\simeq 
f^*(M) \ot_{\cO_X} (\cO_X \ot_{\cO_k} \Gamma(G, \cO_G)^\vee).
$$ 
The unit  $e:\Spec k \to G$ gives a factorization of the identity map
$$
\xymatrix{
\cO_k \ar[r]^-{p^*} &  \Gamma(G, \cO_G) \ar[r]^-{e^*} & \cO_k,
}
$$
and taking duals, a factorization of the identity map of $\cO_k$ through the dual $\Gamma(G, \cO_G)^\vee$.
Hence if $f^+(M)$ were trivial, then $f^*(M)$ would also be trivial, but $f^*$ is conservative. 
Thus we conclude
$f_*$ takes a generating set of compact objects to a generating set of compact objects.

%


We now appeal to condition $(2)$ that the unit in $\qc(BG)$ is compact,
hence so are all dualizables in $\qc(BG)$. In the case  when $X$ is a point, 
the above arguments show that $\qc(BG)$ is compactly generated.
Furthermore, it shows that all compacts are in fact dualizable (since $f_*\cO_k$
is a compact dualizable generator), and hence $BG$
itself is perfect. The morphism $X/G\to BG$ is then a perfect morphism
with perfect base. Thus by Lemma \ref{compact=dualizable} compact and
dualizable objects in $\qc(X/G)$ coincide. This implies (in
combination with the compact generation of $\qc(X/G)$ above) that
$X/G$ is perfect as asserted.
\end{proof}




\section{Tensor products and integral transforms}\label{integral}
In this section, we study the relation between the geometry and
algebra of perfect stacks. We begin 
with some basic properties of tensor
products of $\infty$-categories. Then we prove (Theorem \ref{Thomason tensor})
that the $\infty$-category of sheaves on a product of perfect stacks,
 and more generally a derived fiber product, 
is given by the tensor product of the
$\infty$-categories of sheaves on the factors. This implies that the
$\infty$-category of sheaves on a perfect stack is self-dual, which in turn  allows us to describe
$\infty$-categories of functors by $\infty$-categories of integral kernels (Corollary
\ref{transforms with air}). Finally,
we extend this last result to
a relative setting where the base is an arbitrary derived stack with affine diagonal.

\subsection{Tensor products of $\infty$-categories}\label{tensors of cats}
In this section, we consider some properties of tensor products of $\infty$-categories
that will be used in what follows. First we prove 
(Proposition \ref{dualizable prop}) that $\infty$-categories of modules
over associative algebra objects are dualizable, with duals given by modules for the
opposite algebra, and that tensor product of algebras induces tensor products on module categories.
We then discuss the tensor product of small stable categories, and its compatibility
with passing to the corresponding presentable stable $\infty$-categories of $\Ind$-objects. 

\subsubsection{Algebras and modules}\label{basic algebra}
Recall the tensor product of presentable stable $\infty$-categories
developed in~\cite{dag2}, \cite{dag3}.  Namely, the $\infty$-category
$\mathcal Pr^{\rm L}$ of presentable $\infty$-categories (with
morphisms given by left adjoints) carries a natural symmetric monoidal
tensor product that preserves stable objects.

Let $\cC$ be a monoidal $\infty$-category.  For two presentable
$\oo$-categories $\cM$ and $\cM'$ left tensored over $\cC$, we denote
by $\Fun^{\rm L}_\cC(\cM, \cM')$ the $\oo$-category of left adjoints
from $\cM$ to $\cM'$ that preserve the tensor over $\cC$.

Similarly, the opposite category of $\mathcal Pr^{\rm L}$ is the
$\infty$-category $\mathcal Pr^{\rm R}$ of presentable
$\infty$-categories with morphisms given by right adjoints.  For two
presentable $\oo$-categories $\cM$ and $\cM'$ left cotensored over
$\cC$, we denote by $\Fun^{\rm R}_\cC(\cM, \cM')$ the $\oo$-category
of right adjoints from $\cM$ to $\cM'$ that preserve the cotensor over
$\cC$.

\begin{prop}\label{dualizable prop}
Let $\cC$ be a stable presentable symmetric monoidal $\infty$-category,
and $A\in \cC$ an associative algebra object. 

\begin{enumerate}
\item 
For any $\cC$-module $\cM$, there is a canonical
equivalence of $\infty$-categories
$$
\Mod_A(\cC) \ot_\cC \cM \simeq \Mod_A(\cM).
$$

\item
For $A'\in \cC$ a second associative algebra, there is a 
 canonical
equivalence of $\infty$-categories
$$
\Mod_{A\otimes A'}(\cC) \simeq \Mod_{A}(\cC)\otimes_\cC
\Mod_{A'}(\cC).
$$

\item The $\infty$-category of modules $\Mod_A(\cC)$ is dualizable as a $\cC$-module
with dual given by the $\infty$-category of modules  $\Mod_{A^{\rm op}}(\cC)$ over the opposite algebra.
\end{enumerate}

\end{prop}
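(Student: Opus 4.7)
My plan is to establish (1) as the fundamental input, from which (2) and (3) follow by largely formal manipulations. For (1), I would use the $\oo$-categorical Barr-Beck theorem \cite[Theorem 3.4.5]{dag2} to identify $\Mod_A(\cC)$ with the $\oo$-category of algebras over the monad $T_A = A\ot(-)$ acting on $\cC$: the forgetful functor $U\colon\Mod_A(\cC)\ra\cC$ is conservative, preserves geometric realizations, and admits the free-module left adjoint $F(X)=A\ot X$, so monadicity applies. Because $A$ is an algebra in $\cC$ and the action of $\cC$ on any $\cC$-module distributes over colimits, $T_A$ is $\cC$-linear and induces a monad $T_A^\cM$ on any $\cC$-module $\cM$ whose algebras are by definition the objects of $\Mod_A(\cM)$. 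On the other hand, the tensor product $\Mod_A(\cC)\ot_\cC\cM$ in $\mc Pr^{\rm L}$ is computed by a two-sided bar construction \cite[4.5]{dag2}; using the monadic resolution of $\Mod_A(\cC)$ and the fact that the tensor product in $\mc Pr^{\rm L}$ commutes with geometric realizations, one identifies this with the bar construction whose realization computes algebras over $T_A^\cM$, yielding $\Mod_A(\cC)\ot_\cC\cM\simeq\Mod_A(\cM)$.

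For (2), I would apply (1) with $\cM=\Mod_{A'}(\cC)$, giving
$$\Mod_A(\cC)\ot_\cC\Mod_{A'}(\cC)\simeq\Mod_A(\Mod_{A'}(\cC)).$$
Unwinding the definitions, an object of the right-hand side is an object of $\cC$ equipped with commuting left actions of $A$ and $A'$, which is equivalent data to a left module over the tensor product algebra $A\ot A'$; the $\oo$-categorical version of this Eckmann-Hilton-type identification is part of the operadic framework of \cite{dag2,dag3}, and produces the required equivalence $\Mod_A(\Mod_{A'}(\cC))\simeq\Mod_{A\ot A'}(\cC)$.

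For (3), I would exhibit explicit unit and counit maps for a duality between $\Mod_A(\cC)$ and $\Mod_{A^{\rm op}}(\cC)$. Using the identification from (2), the unit
$$u\colon\cC\ra\Mod_A(\cC)\ot_\cC\Mod_{A^{\rm op}}(\cC)\simeq\Mod_{A\ot A^{\rm op}}(\cC)$$
is the $\cC$-linear extension of the functor sending the monoidal unit of $\cC$ to $A$, viewed as an $A$-bimodule via its own left and right multiplication. The counit
$$c\colon\Mod_{A^{\rm op}}(\cC)\ot_\cC\Mod_A(\cC)\simeq\Mod_{A^{\rm op}\ot A}(\cC)\ra\cC$$
is the relative tensor product $(N,M)\mapsto N\ot_A M$, which exists by \cite[4.5]{dag2}. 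The triangle identities then reduce, after tracing through these identifications, to the canonical equivalences $A\ot_A M\simeq M$ and $N\ot_A A\simeq N$, which are built into the construction of the relative tensor product.

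The main obstacle will be the rigorous execution of part (1). Although heuristically transparent, its proof requires a careful comparison, carried out with full $\oo$-categorical coherence, between the monadic bar resolution computing $\Mod_A(\cC)$ as a $\cC$-module and the two-sided bar construction computing the relative tensor product in $\mc Pr^{\rm L}$. To control these simplicial objects and their realizations, I would rely heavily on the foundational compatibilities established in \cite[\S 4]{dag2}, in particular the interaction between relative tensor products, bar constructions, and monadic structures in the setting of presentable $\oo$-categories.
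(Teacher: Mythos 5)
Your parts (2) and (3) track the paper's proof closely: (2) is obtained by applying (1) with $\cM=\Mod_{A'}(\cC)$ and identifying commuting actions (the paper does this via a comparison of monads with common underlying functor $A\ot A'\ot(-)$), and (3) uses exactly the same unit ($A$ viewed as a bimodule, induced from the monoidal unit) and counit (the relative tensor product $\ot_A$), with the triangle identities reducing to $A\ot_A M\simeq M$. The problem is part (1), which you rightly flag as the crux but for which your proposed argument has a genuine gap. Applying Barr--Beck to the adjunction between $\cC$ and $\Mod_A(\cC)$ is standard and is not where the difficulty lies; the difficulty is in identifying the colimit in ${\mc Pr}^{\rm L}$ of the two-sided bar construction computing $\Mod_A(\cC)\ot_\cC\cM$ with the module category $\Mod_A(\cM)$. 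Your mechanism---comparing that bar construction with ``the bar construction whose realization computes algebras over $T_A^\cM$''---presupposes an object that does not exist by definition: an $\oo$-category of algebras over a monad is not presented as a geometric realization of a simplicial $\oo$-category, and the ``monadic resolution of $\Mod_A(\cC)$'' is a resolution of its \emph{objects}, not a presentation of $\Mod_A(\cC)$ as a $\cC$-module in ${\mc Pr}^{\rm L}$. So the claimed identification is precisely the statement to be proved, not a tool one can invoke.

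What actually closes the gap, and what the paper does, is to tensor the free/forgetful adjunction with $\cM$ to obtain an adjunction between $\cM$ and $\Mod_A(\cC)\ot_\cC\cM$, observe that the resulting monad on $\cM$ has underlying functor $A\ot(-)$ (so its modules are $\Mod_A(\cM)$), and then verify the monadic Barr--Beck hypotheses for the induced functor $G\ot\Id\colon \Mod_A(\cC)\ot_\cC\cM\to\cM$. Colimit preservation follows from the universal property of the tensor product, but conservativity of $G\ot\Id$ is the delicate point: the paper devotes two preparatory lemmas to it, passing to $\oo$-categories of right adjoints $\Fun^{\rm R}_\cC(\cD,-)$ and constructing the presentable full subcategory of $G$-acyclic objects as a limit in ${\mc Pr}^{\rm L}$. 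Your proposal never addresses why this comparison functor is conservative (equivalently, why your bar-versus-bar comparison would be an equivalence), so as written the central step of (1) is unproven; replacing that comparison by the monadicity argument for the tensored adjunction, together with a proof of conservativity of $G\ot\Id$, is what is needed.
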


\def\m{\mathrm{Mod}}

The proof will depend on the following two lemmas:

\begin{lemma}\label{useful lemma}
Let $\cM$ and $\cM'$ be stable presentable $\oo$-categories that are 
left tensored and cotensored over $\cC$.
Let $G: \cM' \ra \cM$ be a right adjoint that is tensored and cotensored over $\cC$. Assume further that $G$ is colimit preserving. 
Then $G$ is conservative if the induced functor $\Fun_\cC^{\rm R}(\cD, \cM') \ra \Fun_\cC^{\rm R}(\cD, \cM)$ is conservative
for any $\cD$.
\end{lemma}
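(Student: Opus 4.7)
My plan is to reduce the conservativity of $G$ to the hypothesis for a specific test category by taking $\cD = \cC$ itself, regarded as a module over itself. The point is that $\Fun_\cC^{\rm R}(\cC, \cM)$ should be canonically equivalent to $\cM$ (via evaluation at the unit $1_\cC$), with the induced functor $G_*$ corresponding under this equivalence to $G$ itself.

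To set up this identification, I would construct inverse functors. To each $m \in \cM$ associate the cotensor functor $(-)\pitchfork m: \cC \to \cM$; it is right adjoint to $(-)\otimes m$ by definition of the cotensor structure, and it is $\cC$-linear (i.e.\ cotensor-preserving) by the standard associativity $(c\otimes c')\pitchfork m \simeq c\pitchfork(c'\pitchfork m)$. In the other direction, send $F \in \Fun_\cC^{\rm R}(\cC,\cM)$ to $F(1_\cC) \in \cM$. The composite one way recovers $m$ on the nose; the composite the other way recovers $F$ because $\cC$-linearity plus cotensor preservation gives $F(c) \simeq F(c\pitchfork 1_\cC) \simeq c \pitchfork F(1_\cC)$.

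With this in hand, I would check that under the identifications $\Fun_\cC^{\rm R}(\cC,\cM')\simeq \cM'$ and $\Fun_\cC^{\rm R}(\cC,\cM)\simeq \cM$, post-composition with $G$ corresponds precisely to $G$. This uses exactly the hypothesis that $G$ preserves cotensors, since $G\bigl((-)\pitchfork m'\bigr) \simeq (-)\pitchfork G(m')$. Specializing the hypothesis to $\cD = \cC$ then yields the conclusion: conservativity of $G_*$ in this case is conservativity of $G$.

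The main obstacle will be verifying the equivalence $\Fun_\cC^{\rm R}(\cC,\cM) \simeq \cM$ rigorously, since an object $m\in\cM$ only defines a right adjoint functor $c\mapsto c\pitchfork m$ when the cotensor behaves compatibly with limits in $c$, and one must also keep careful track of the distinction between the cotensor on $\cC$ (the internal Hom of the monoidal structure) and the cotensor on $\cM$. If the direct identification with $\cD = \cC$ fails in full generality, the alternative is to take $\cD$ larger — for instance $\cD = \cM'$ itself, using that $\Id_{\cM'} \in \Fun_\cC^{\rm R}(\cM',\cM')$ maps under $G_*$ to $G$ and that morphisms of such functors can be tested pointwise — and then appeal to the hypothesis for that $\cD$. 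Either way the essential content is producing enough $\cC$-linear right adjoints out of a test category to detect every object and morphism of $\cM'$ at the level of functor categories.
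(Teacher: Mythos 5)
There is a genuine gap, and it sits in the step your whole reduction rests on: the claimed equivalence $\Fun^{\rm R}_\cC(\cC,\cM)\simeq \cM$. The cotensor is \emph{contravariant} in the $\cC$-variable: by definition $\Map_\cM(c\ot m',m)\simeq \Map_\cM(m',c\pitchfork m)$, so $c\mapsto c\pitchfork m$ is a functor $\cC^{\rm op}\to\cM$, not $\cC\to\cM$ (your own associativity formula $(c\ot c')\pitchfork m\simeq c\pitchfork(c'\pitchfork m)$ is the telltale sign, as in $\Hom(c\ot c',m)\simeq\Hom(c,\Hom(c',m))$). Likewise, the right adjoint of $(-)\ot m:\cC\to\cM$ is the enriched mapping-object functor $\cM\to\cC$, not the cotensor. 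The statement that is true is the \emph{left}-adjoint version $\Fun^{\rm L}_\cC(\cC,\cM)\simeq\cM$ via evaluation at the unit; for right adjoints one instead gets, by passing to left adjoints, $\Fun^{\rm R}_\cC(\cC,\cM)\simeq \Fun^{\rm L}_\cC(\cM,\cC)^{\rm op}$, the opposite of the $\cC$-linear dual. Already for $\cM=\cC$ this is $\cC^{\rm op}$ (the functors $\intHom(c,-)$), and evaluation at $1_\cC$ is Spanier--Whitehead-type duality, not an equivalence. So specializing the hypothesis to $\cD=\cC$ does not yield conservativity of $G$ on $\cM'$. Your fallback $\cD=\cM'$ does not repair this: knowing that post-composition with $G$ is conservative on natural transformations between $\cC$-linear \emph{right adjoint} endofunctors of $\cM'$ does not let you test an arbitrary object or morphism of $\cM'$, since there is no evident way to realize it inside $\Fun^{\rm R}_\cC(\cM',\cM')$ (constant functors, for instance, are not right adjoints). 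Producing a test category $\cD$ together with enough $\cC$-linear right adjoints into $\cM'$ is exactly the nontrivial content, and it is what your sketch leaves unaddressed.

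For comparison, the paper argues by contraposition with a purpose-built $\cD$: assuming $G$ is not conservative (equivalently, in the stable setting, that $G$ kills a nonzero object), it takes $\cD\subset\cM'$ to be the full subcategory of $G$-acyclic objects, identifies it with the fiber product $0\times_\cM\cM'$ computed in $\Mod_\cC({\mc Pr}^{\rm L})$ to see that it is presentable and tensored/cotensored over $\cC$, and checks via the adjoint functor theorem that the inclusion $j:\cD\to\cM'$ is a $\cC$-linear right adjoint. Then $j$ is nonzero while $G\circ j\simeq 0$, contradicting the hypothesis for that $\cD$. If you want to salvage your strategy, you would need an argument of this flavor — a choice of $\cD$ (depending on $G$) admitting a nonzero $\cC$-linear right adjoint to $\cM'$ killed by $G$ — rather than the universal choice $\cD=\cC$.
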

\begin{proof}
Suppose $G$ is not conservative. Then to prove the lemma, it suffices to exhibit a
presentable $\oo$-category $\cD$ also
cotensored over $\cC$ and a nontrivial right adjoint $j: \cD \ra \cM'$
cotensored over $\cC$ such that $j\circ G$ is trivial.

Define $\cD$ to be the full $\oo$-subcategory of $\cM'$ of $G$-acyclic objects,
that is, 
objects $m \in \cM$ such that $G(m)$ is trivial. Our first
task is to show that $\cD$ is indeed presentable.

Observe that $\cD$ is equivalent to the fiber product $\cD\simeq 0 \times_\cM \cM'$, where the limit is computed in the $\oo$-category $\rm Cat_\oo$
of $\oo$-categories. Recall by \cite[Proposition 5.5.3.13]{topos}, the natural functor
$\mathcal Pr^{\rm L} \ra {\rm Cat}_\infty$ preserves limits. Furthermore, 
the forgetful functor $\m_\cC(\mathcal Pr^{\rm L}) \to \mathcal Pr^{\rm L}$ also preserves limits since it has a left adjoint (given by induction). 

Since the functor $G$ preserves colimits and is $\cC$-linear, we may regard it as a morphism in $\m_\cC(\mathcal Pr^{\rm L})$. Thus $\cD$ can be computed as a limit in $\m_\cC(\mathcal Pr^{\rm L})$, 
and so can be regarded as an object of $\mathcal Pr^{\rm L}$.
In other words, 
 $\cD$ is presentable and furthermore tensored over $\cC$.
Finally, since $\cD$ is tensored over $\cC$, it is automatically cotensored as well.

Now it remains to show that the inclusion $j:\cD \ra \cM'$ is indeed a right adjoint
and cotensored over $\cC$. 
Since $j$ preserves all limits and colimits (and in particular $\kappa$-filtered colimits), 
the adjoint functor theorem applies. Finally, since $G$
is cotensored over $\cC$,  $j$ is as well.
\end{proof}

\begin{lemma}\label{prop cons} 
Let $\cM$ be a stable presentable $\oo$-category which is left tensored and cotensored over $\cC$, and let $A$ be an associative algebra in $\cC$. Then the forgetful functor
$G:  \m_A(\cC) \ot_\cC \cM \to \cM$ 
is conservative.
\end{lemma}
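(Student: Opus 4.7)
The plan is to apply Lemma \ref{useful lemma} to $G = U \otimes_\cC \mathrm{id}_\cM$, where $U : \m_A(\cC) \to \cC$ is the forgetful functor. First I would verify the hypotheses of that lemma. The functor $U$ admits both a left adjoint, the free module functor $F_0 : \cC \to \m_A(\cC)$ given by $X \mapsto A \otimes X$, and a right adjoint, the cofree module functor $X \mapsto \intHom_\cC(A,X)$. In particular $U$ is $\cC$-linear and bicontinuous, hence tensored and cotensored over $\cC$, and tensoring with $\mathrm{id}_\cM$ in $\mathcal Pr^{\rm L}$ preserves these features. So $G$ satisfies the hypotheses of Lemma \ref{useful lemma}, and its left adjoint $F := F_0 \otimes_\cC \mathrm{id}_\cM$ sends $N \in \cM$ to $F(N) = A \boxtimes N$, where $\boxtimes : \m_A(\cC) \times \cM \to \m_A(\cC) \otimes_\cC \cM$ denotes the universal bilinear pairing.

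By Lemma \ref{useful lemma} it then suffices to show that for every presentable $\cC$-cotensored $\cD$ and every $\cC$-linear right adjoint $\phi : \cD \to \m_A(\cC) \otimes_\cC \cM$, the vanishing $G \circ \phi \simeq 0$ forces $\phi \simeq 0$. Passing to left adjoints $\Phi := \phi^{\rm L}$, this is equivalent to the following: every $\cC$-linear colimit-preserving functor $\Phi$ out of $\m_A(\cC) \otimes_\cC \cM$ that vanishes on the image of $F$ must itself be zero. Since $\Phi$ preserves small colimits, this reduces to the claim that the image $F(\cM)$ generates $\m_A(\cC) \otimes_\cC \cM$ under small colimits.

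To establish this generation claim I would combine two inputs. First, by the universal property of the relative tensor product in $\mathcal Pr^{\rm L}$, every object of $\m_A(\cC) \otimes_\cC \cM$ is a small colimit of objects of the form $M \boxtimes N$. Second, each $A$-module $M$ is recovered as the geometric realization $M \simeq |B_\bullet(A,A,M)|$ of its two-sided bar resolution, whose terms $B_n = F_0(A^{\otimes n} \otimes U(M))$ are free $A$-modules. Colimit-preservation and $\cC$-balancing of $\boxtimes$ then give
$$
M \boxtimes N \simeq |B_\bullet(A,A,M) \boxtimes N| \simeq |F(A^{\otimes \bullet} \otimes U(M) \otimes N)|,
$$
using termwise the balancing identity $F_0(X) \boxtimes N \simeq (A \otimes X) \boxtimes N \simeq A \boxtimes (X \otimes N) = F(X \otimes N)$. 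Hence every $M \boxtimes N$, and therefore every object of $\m_A(\cC) \otimes_\cC \cM$, lies in the colimit closure of $F(\cM)$.

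The main technical point to get right is the $\cC$-balancing of the pairing $\boxtimes$ and the compatibility of $\boxtimes$ with geometric realizations in $\m_A(\cC)$; both are built into the construction of the relative tensor product in $\mathcal Pr^{\rm L}$ as developed in \cite[4]{dag2}.
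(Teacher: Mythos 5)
Your proof is correct and follows essentially the same route as the paper's: both reduce, via Lemma \ref{useful lemma} and the duality between $\cC$-linear left and right adjoints, to the statement that a colimit-preserving $\cC$-linear functor out of $\m_A(\cC)\ot_\cC\cM$ vanishing on the image of the induction $F$ must vanish. The only difference is that you spell out the generation step (pure tensors generate, and the bar resolution plus $\cC$-balancing exhibits each $M\boxtimes N$ as a realization of objects $F(A^{\ot n}\ot U(M)\ot N)$), which the paper asserts in a single sentence via the universal property of bilinear functors.
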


\begin{proof} Observe that  for any $\cD$ tensored over $\cC$, the pullback
$$
\Fun^{\rm L}_\cC (\m_A(\cC) \ot_\cC \cM, \cD) \ra \Fun^{\rm L}_\cC(\cM, \cD)
$$ induced by the induction $F: \cM \to \m_A \ot_\cC \cM$ is
conservative. In other words, if a functor out of $\m_A \times \cM$
(which preserves colimits in each variable) is trivial when restricted
to $\cM$, then it is necessarily trivial.

Consequently, switching to opposite categories, we have that the corresponding functor
$$\Fun^{\rm R}_\cC (\cD, \m_A \ot \cM) \ra \Fun^{\rm R}_\cC (\cD, \cM)
$$ 
induced by the forgetful functor $G:  \m_A \ot_\cC \cM \to \cM$
 is conservative.

Now we can apply Lemma \ref{useful lemma} with $\cM' =   \m_A \ot \cM$
to obtain that $G$ is conservative.
\end{proof}


\begin{proof}[Proof of Proposition \ref{dualizable prop}]

We first prove that $\m_A(\cC)\ot_\cC \cM$ is equivalent to
$\m_A(\cM)$ by the natural evaluation functor.  Consider the
adjunction
$$
\xymatrix{\cC\ar@/^1pc/[r]^-{F}&\m_{A}(\cC) \ar[l]^-{G}
}
$$
where $F(-) =A\ot -$ is the induction, and $G$ is the forgetful functor.

The above adjunction induces an adjunction
$$
\xymatrix{\cM\ar@/^1pc/[r]^-{F\ot \Id}&\m_A(\cC) \ot_\cC \cM \ar[l]^-{G\ot\Id}
\ar[r] &\m_T(\cM)
}
$$ and thus a functor to modules over the monad $T=(G\ot{\rm
  id})\circ(F\ot{\rm id})$ acting on $\cM$.  The functor underlying
$T$ is given by tensoring with $A$, so we also have an equivalence
$\m_T(\cM)\simeq \m_A(\cM)$.

By its universal characterization, the functor
$G\ot {\rm id}$ is colimit preserving.
Note as well that $G$ and hence $G\ot\Id$ is also $\cC$-linear
(or in other words, the adjunction satisfies an analogue of the projection formula).
Thus it follows from Lemma~\ref{prop cons} that $G\ot{\rm id}$ is also conservative.
Thus $G\ot{\rm id}$ satisfies the monadic Barr-Beck conditions, and we obtain the
desired equivalence $\m_A(\cC) \ot_\cC \cM\simeq \m_A(\cM)$.

Next, we can apply this to the instance where $\cM$ is the
$\infty$-category of left modules over another associative algebra
$A'$ to conclude that there is a natural equivalence $\m_A(\cC)\ot_\cC
\m_{A'}(\cC) \simeq \m_A(\m_{A'}(\cC))$. We now have a chain of
adjunctions
$$
\xymatrix{\cC\ar@/^1pc/[r]^-{F'}&\m_{A'}(\cC)\ar@/^1pc/[r]^-{F''} \ar[l]^-{G'}&\m_A(\m_{A'}(\cC))\ar[l]^-{G''}\\}
$$ in which the composite $G'\circ G''$ is colimit preserving and
conservative, and hence satisfies the monadic Barr-Beck conditions.

Furthermore, the above adjunction naturally extends to a diagram in which the cycle of left adjoints (denoted
by bowed arrows), and hence
also the cycle of right adjoints (denoted by straight arrows),  commute
$$
\xymatrix{\ar@/_1pc/[dr]_-{F} \cC\ar@/^1pc/[rr]^-{F''F'} &&  \m_A(\m_{A'}(\cC))\ar[ll]^-{G'G''}
\ar@/^1pc/[dl]^-{f} 
\\
& \ar[ul]_-G \m_{A\ot A'}(\cC) \ar[ur]^-{g} &
}
$$
Here $F(-) =A_1\ot A_2 \ot -$ is the induction, $G$ is the forgetful functor,
$f$ is the natural functor factoring through $\m_A(\cC)\ot_\cC\m_{A'}(\cC)$,
and $g$ is its right adjoint. From this diagram, we obtain a morphism of monads
$$
\xymatrix{
G'G''F''F' \ar[r] &  G'G''gfF''F' \simeq GF.
}
$$

Now the underlying functors of the monads $GF(-)$ and $G'G''F''F'(-)$ are both equivalent to the tensor $A\ot A'\ot(-)$, so 
the above morphism of monads is an equivalence. Thus we obtain the promised equivalence
$\m_A(\cC)\ot_\cC\m_{A'}(\cC) \simeq \m_A(\m_{A'}(\cC))\simeq \m_{A\ot
  A'}(\cC)$. 

Finally, we show that the $\infty$-category of left $A$-modules
$\m_A(\cC)$ is a dualizable $\cC$-module by directly exhibiting the
$\infty$-category of right $A$-modules $\m_{A^{\rm op}}(\cC)$ as its
dual. The trace map is given by the two-sided bar construction
$$
\tau:  \m_A(\cC) \ot_\cC\m_{A^{\rm op}}(\cC) \to \cC
\qquad
M, N \mapsto M\ot_A N
$$ 
The unit map  is given by the induction
$$
u:\cC \to \m_{A^{\rm op}}(\cC) \ot_\cC \m_A(\cC) 
\simeq \m_{A^{\rm op}\ot A}(\cC)
\qquad
c \mapsto A\ot c
$$
where we regard $A\ot c$ as an $A$-bimodule.

One can verify directly that the composition
$$
\xymatrix{\m_A(\cC)\ar[r]^-{{\rm id}\ot u}&\m_A(\cC)\ot_\cC \m_{A^{\rm op}}(\cC)\ot_\cC\m_A(\cC)
\ar[r]^-{\tau\ot{\rm id}} &\m_A(\cC)}
$$ 
is equivalent to the identity. First, $({\rm id}\ot u)(M)$ is equivalent to $A \ot M$ 
regarded as an $A\ot A^{\rm op} \ot A$-module, and
second, $(\tau\ot{\rm id})(A\ot M)$ is equivalent to $A\ot_A M \simeq M$.
\end{proof}


\subsubsection{Small stable categories}
We have been working with the symmetric monoidal structure on the $\oo$-category 
$\mathcal Pr^{\rm L}$ of presentable
$\infty$-categories as developed in~\cite{dag2},
\cite{dag3}.
We will also need the tensor product of small stable idempotent complete
$\infty$-categories,
in particular, the $\infty$-categories of compact objects in presentable stable $\infty$-categories. 

Let $\Kar$ be the full $\infty$-subcategory of the $\infty$-category
of stable categories (with morphisms exact functors) consisting of
those $\infty$-categories that are idempotent complete. Recall that an
$\oo$-category $\cC$ is idempotent complete if the essential image of
the Yoneda embedding $\cC \ra \cP(\cC)$ is closed under retracts.

\begin{prop}\label{Karoubian}
The $\infty$-category $\Kar$ carries a symmetric monoidal structure characterized by the property
that for $\cC_1,\cC_2,\cD\in \Kar$, the $\infty$-category of exact functors
$\Fun_{\Kar}(\cC_1\ot \cC_2, \cD)$ is equivalent to the full $\infty$-subcategory 
of all functors $\cC_1\times \cC_2 \to \cD$ that preserve finite colimits in 
$\cC_1$ and $\cC_2$ separately. Furthermore, passing to the corresponding 
stable presentable $\oo$-categories of $\Ind$-objects 
is naturally a symmetric monoidal functor.
\end{prop}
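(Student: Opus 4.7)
The plan is to transport the symmetric monoidal structure on $\mathcal{Pr}^{\rm L}$ (from Section~\ref{monoidal infinity}) to $\Kar$ via the $\Ind$-construction. Recall from \cite[5.3.5, 5.5.7]{topos} that $\Ind$ identifies $\Kar$ (restricted to stable idempotent complete small categories) with the full $\infty$-subcategory $\mathcal{Pr}^{\rm L}_\omega \subset \mathcal{Pr}^{\rm L}$ whose objects are compactly generated presentable stable $\infty$-categories and whose morphisms are the colimit-preserving functors which additionally preserve compact objects; the inverse equivalence is $\cC \mapsto \cC^c$. I would then define
\[
\cC_1 \ot_{\Kar} \cC_2 \;:=\; \bigl(\Ind(\cC_1) \ot_{\mathcal{Pr}^{\rm L}} \Ind(\cC_2)\bigr)^c,
\]
and obtain the symmetric monoidal structure on $\Kar$ by pullback along $\Ind$, provided this operation preserves the subclass $\mathcal{Pr}^{\rm L}_\omega$.

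First I would verify that $\mathcal{Pr}^{\rm L}_\omega$ is closed under the $\mathcal{Pr}^{\rm L}$-tensor product. Given $M_i \in \cC_i^c$, the universal bilinear functor $\Ind(\cC_1) \times \Ind(\cC_2) \to \Ind(\cC_1) \ot \Ind(\cC_2)$ is separately colimit-preserving, so it sends such a pair $(M_1, M_2)$ to an object out of which mapping commutes with filtered colimits (by bilinearity and compactness in each slot); denote this object $M_1 \boxtimes M_2$. A density argument using that $\Ind(\cC_1) \ot \Ind(\cC_2)$ is generated under colimits by the image of the bilinear map then shows that these $M_1 \boxtimes M_2$ form a set of compact generators, yielding compact generation.

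For the universal property, I would chain three identifications. By the universal property of the $\mathcal{Pr}^{\rm L}$-tensor product from \cite[4.1]{dag2}, colimit-preserving functors $\Ind(\cC_1) \ot \Ind(\cC_2) \to \Ind(\cD)$ correspond to functors $\Ind(\cC_1) \times \Ind(\cC_2) \to \Ind(\cD)$ that preserve colimits in each variable. By the universal property of $\Ind$ from \cite[5.3.5]{topos}, these in turn correspond to functors $\cC_1 \times \cC_2 \to \Ind(\cD)$ that preserve finite colimits in each variable. Finally, passing to compact objects, an exact functor $\cC_1 \ot_{\Kar} \cC_2 \to \cD$ corresponds to such a separately finite-colimit-preserving functor whose image lies in $\cD \subset \Ind(\cD)$; this image condition is automatic since exactness of the extension on compact objects is equivalent to the hypothesis. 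The symmetric monoidality of $\Ind$ is then essentially tautological from the defining formula.

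The main obstacle will be the precise description of compact objects in $\Ind(\cC_1) \ot \Ind(\cC_2)$, in particular confirming that they are exactly retracts of finite colimits of external tensors $M_1 \boxtimes M_2$ with $M_i$ compact, and that this subcategory is already idempotent complete (so no additional Karoubi completion is needed after the fact). This amounts to combining Neeman-type arguments identifying compact objects in a compactly generated stable category with retracts of finite iterated extensions of generators, together with the explicit realization of the $\mathcal{Pr}^{\rm L}$-tensor product as a localization of a presheaf category (so that coherence data for the symmetric monoidal structure transports correctly).
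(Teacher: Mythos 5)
Your proposal is correct and follows essentially the same route as the paper: both define $\cC_1\ot\cC_2 = (\Ind\cC_1\ot\Ind\cC_2)^c$, establish the universal property by chaining the universal properties of the $\mathcal Pr^{\rm L}$-tensor product and of $\Ind$, and use that compact objects of $\Ind\cC_1\ot\Ind\cC_2$ are generated under finite colimits and retracts by external products of compacts (with idempotent completeness of the compacts being automatic). The "obstacle" you flag is exactly the ingredient the paper also invokes, and the symmetric monoidality of $\Ind$ then falls out of the construction just as you indicate.
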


\begin{proof}
For $\cC_1,\cC_2\in \Kar$, we define their tensor product by
$$
\cC_1\ot \cC_2 = (\Ind(\cC_1)\ot \Ind(\cC_2))^c
$$ 
where the tensor product of the right hand side  is calculated in the $\oo$-category
$\mathcal Pr^{\rm L}$ of presentable $\oo$-categories (with morphisms left adjoints), and
the superscript ${}^c$ denotes the full $\infty$-subcategory of compact objects of a presentable $\oo$-category.
Since $\Ind\cC_1\ot \Ind\cC_2$ is idempotent complete
and  retracts of compact objects are compact,
$(\Ind\cC_1\ot \Ind\cC_2)^c$ is idempotent complete as well.
Thus the tensor product $\cC_1\ot \cC_2$ is indeed an object of $\Kar$.

For $\cC_1,\cC_2,\cD\in \Kar$, let $\Fun'(\cC_1 \times \cC_2, \cD)$ be the full $\oo$-subcategory of 
functors $\cC_1 \times \cC_2\to \cD$ that preserve finite colimits in 
$\cC_1$ and $\cC_2$ separately.
We claim that For $\cC_1,\cC_2\in \Kar$, the tensor product
$\cC_1\ot \cC_2$ corepresents the functor $\Fun'(\cC_1 \times \cC_2, -)$
in the sense that for any $\cD\in\Kar$, there is a canonical equivalence
$$
\Fun'(\cC_1 \times \cC_2, \cD) \simeq \Fun_{\Kar}(\cC_1 \ot \cC_2, \cD).
$$
As a consequence, the associativity and symmetry of the tensor product $\cC_1\ot\cC_2$ 
will immediately follow from the analogous properties of $\Fun'$.

For $\cC_1,\cC_2,\cD\in \mathcal Pr^{\rm L}$, let $\Fun^{L \ti L}(\cC_1\times \cC_2,\cD)$ be 
the full $\oo$-subcategory of 
functors $\cC_1 \times \cC_2\to \cD$ that preserve colimits in 
$\cC_1$ and $\cC_2$ separately.
To prove the claim, observe that the inclusion $\cD \ra \Ind\cD$ induces a fully faithful functor 
$$\Fun'(\cC_1\times \cC_2, \cD) \ra \Fun'(\cC_1\times \cC_2, \Ind\cD)
\simeq \Fun^{L \ti L}(\Ind\cC_1\times \Ind\cC_2, \Ind\cD)
$$ 
Its essential image consists of functors that preserve compact objects.
 By definition of the monoidal structure on the $\oo$-category $\mathcal Pr^{\rm L}$
 of presentable $\oo$-categories, we have a further equivalence 
 $$
 \Fun^{L\ti L}(\Ind\cC_1\times \Ind\cC_2, \Ind\cD) \simeq \Fun^{\rm L}(\Ind\cC_1\ot \Ind\cC_2, \Ind\cD).
 $$ 

Since the compact objects of $\Ind\cC_1 \ot \Ind\cC_2$ are generated by finite colimits of external products of compacts objects, we obtain an equivalence between 
$\Fun'(\cC_1\times \cC_2, \cD)$ and the full $\oo$-subcategory of $\Fun^{\rm L}(\Ind\cC_1\ot\Ind \cC_2, \Ind\cD)$ consisting of functors that preserve compact objects.
In other words, 
we have the asserted equivalence that characterizes the tensor product
$$
\Fun'(\cC_1\times \cC_2, \cD) \simeq \Fun_{\Kar}((\Ind\cC_1\ot\Ind \cC_2)^c, \cD).
$$

Finally, the assertion that the functor $\Ind:\Kar\to \mathcal Pr^{\rm L}$ 
is symmetric monoidal is immediate from the constructions
and the natural equivalence $\Ind(\cC^c) \simeq \cC$, for $\cC\in \mathcal Pr^{\rm L}$.
\end{proof}

\begin{remark}
Given small stable idempotent complete
$\infty$-categories $\cC_1, \cC_2$,
by construction
their tensor product $\cC_1\ot \cC_2$ is again 
a small stable idempotent complete
$\infty$-category. Though it is possible to consider other versions of a tensor
product on small stable $\infty$-categories that need not preserve idempotent complete
$\infty$-categories,
our approach builds it in from the beginning.
\end{remark}


\subsection{Sheaves on fiber products}
In this section, we study the $\infty$-category of sheaves on the derived
fiber product of perfect stacks. The main technical result is
that it is equivalent to the tensor product of the $\infty$-categories of sheaves
on the factors
(Theorem \ref{Thomason tensor}). The proof involves first showing that an analogous
assertion holds for $\infty$-categories of perfect complexes.
 The remainder of the section is devoted to collecting corollaries of the main techinical result.

Recall that for a perfect stack $X$, 
compact (equivalently, perfect or dualizable) objects in the $\infty$-category $\qc(X)$
form a small stable idempotent complete $\infty$-category $\qc(X)^c$, and there is canonical
equivalence
$\qc(X)\simeq \Ind \qc(X)^c$.
Recall as well the tensor product of small stable idempotent complete $\oo$-categories, 
and that the functor $\Ind$ is symmetric monoidal.

\begin{prop}\label{prop external product}
Let $X_1, X_2$ be perfect stacks. Then external tensor product
defines an equivalence
$$
\boxtimes:\qc(X_1)^c \ot \qc(X_2)^c\risom \qc(X_1\ti X_2)^c
$$
In other words, the $\oo$-category of perfect complexes on the product is the 
(small stable idempotent complete) tensor product of the
$\oo$-categories of perfect complexes on the factors.
\end{prop}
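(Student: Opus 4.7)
The plan is to prove the stated equivalence by passing to $\Ind$-completions, reducing to an assertion about presentable $\infty$-categories of quasi-coherent sheaves, and then establishing that assertion by descent from the affine case.

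First I would construct the functor. External tensor product $(M_1,M_2)\mapsto p_1^*M_1\ot p_2^*M_2$ sends pairs of perfect (equivalently, dualizable) complexes to perfect complexes on $X_1\ti X_2$, since pullback and tensor product preserve dualizable objects, and by Proposition \ref{perfect=dualizable} dualizable equals perfect on any derived stack. External tensor product preserves finite colimits in each variable separately, so by the universal property of the tensor product in $\Kar$ (Proposition \ref{Karoubian}), it factors through the claimed functor
$$\boxtimes:\qc(X_1)^c\ot \qc(X_2)^c\longrightarrow \qc(X_1\ti X_2)^c.$$

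Next I would apply $\Ind$. Since $X_1,X_2$ and $X_1\ti X_2$ are all perfect (the last by Proposition \ref{Thomason product}), we have $\qc(X_i)\simeq \Ind\qc(X_i)^c$ and $\qc(X_1\ti X_2)\simeq\Ind\qc(X_1\ti X_2)^c$. Because $\Ind$ is symmetric monoidal (Proposition \ref{Karoubian}), $\Ind\boxtimes$ is a colimit-preserving functor
$$\qc(X_1)\ot \qc(X_2)\longrightarrow \qc(X_1\ti X_2)$$
in $\mathcal Pr^{\rm L}$. If this $\Ind$-functor is an equivalence, then the original functor on compact objects is also an equivalence, since $\Ind$ is fully faithful and one recovers the compact objects of an equivalence as the equivalence of their compact subcategories.

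The main step is thus to prove that the induced functor on presentable $\oo$-categories is an equivalence, and I would do this by descent. Choose affine covers $U_i\to X_i$ with \v{C}ech nerves $U_i^\bullet\to X_i$; these are cosimplicial affine derived schemes since each $X_i$ has affine diagonal. By descent for $\qc$, $\qc(X_i)\simeq\Tot\qc(U_i^\bullet)$. Because the fiber product of affine diagonals is affine, $U_1^\bullet\ti U_2^\bullet$ is an affine \v{C}ech nerve of $X_1\ti X_2$, so $\qc(X_1\ti X_2)\simeq\Tot\qc(U_1^\bullet\ti U_2^\bullet)$. In each bisimplicial degree, the affine case $\qc(U_1^n)\ot\qc(U_2^m)\simeq\qc(U_1^n\ti U_2^m)$ follows by applying Proposition \ref{dualizable prop}(2) with $\cC=\Mod_k$, which yields $\Mod_{A_1}\ot\Mod_{A_2}\simeq\Mod_{A_1\ot A_2}$ for $U_i^n=\Spec A_i$. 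These equivalences assemble into a map of cosimplicial objects, and it remains to interchange $\ot$ with the totalization on the left-hand side.

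The main obstacle is precisely this interchange. It is here that the perfectness hypothesis enters in an essential way: since each $\qc(X_i)$ is compactly generated, it is dualizable as an object of $\mathcal Pr^{\rm L}$, so tensoring with $\qc(X_i)$ preserves all limits, not merely colimits. Applied twice, this forces the tensor product to commute with the totalization of $\qc(U_1^\bullet\ti U_2^\bullet)$, so the natural map $\qc(X_1)\ot\qc(X_2)\to\qc(X_1\ti X_2)$ is identified with the totalization of a cosimplicial equivalence and is therefore itself an equivalence. Passing back to compact objects, as noted above, completes the proof.
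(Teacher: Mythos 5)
Your argument is correct in outline but runs in the opposite direction from the paper's, and its crux rests on a fact the paper never proves. The paper works at the level of compact objects directly: Proposition \ref{Thomason product} already shows that external products of compact objects are compact and generate $\qc(X_1\ti X_2)$, so the proof of Proposition \ref{prop external product} only has to check full faithfulness, which is done by the K\"unneth-type computation $\Hom_X(M_1\boxtimes M_2,N_1\boxtimes N_2)\simeq\Hom_{X_1}(M_1,N_1)\ot\Hom_{X_2}(M_2,N_2)$, using dualizability of the $M_i$ and the projection formula for the perfect projection; the presentable statement (the absolute case of Theorem \ref{Thomason tensor}) is then \emph{deduced} from the proposition by applying the symmetric monoidal functor $\Ind$. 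You invert this logic: you prove the presentable equivalence $\qc(X_1)\ot\qc(X_2)\simeq\qc(X_1\ti X_2)$ first, by affine descent plus an interchange of $\ot$ with totalizations, and then restrict to compact objects. That route is viable (and would in effect re-derive the absolute case of Theorem \ref{Thomason tensor} and the generation statement of Proposition \ref{Thomason product} along the way); your reductions --- the factorization through $\Kar$, the bidegree-wise identification $\Mod_{A_1}\ot\Mod_{A_2}\simeq\Mod_{A_1\ot A_2}$ from Proposition \ref{dualizable prop}, and the recovery of the compact-level equivalence from the $\Ind$-level one --- are all sound.

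The step to be careful about is the interchange, which is exactly where the content sits. You justify it by asserting that a compactly generated stable presentable $\oo$-category is dualizable in ${\mc Pr}^{\rm L}$. This is true (the dual is $\Ind((\qc(X)^c)^{\rm op})$) and not circular, but it is established nowhere in this paper: Proposition \ref{dualizable prop} gives dualizability only for module categories $\Mod_A(\cC)$, and the self-duality of $\qc(X)$ (Corollary \ref{prop dualizable}) is itself a consequence of the proposition you are proving. So as written, the hardest point of your proof is outsourced to an unproved general fact; you should either cite it explicitly or prove it with the tools of Section \ref{tensors of cats}. Note also that your argument genuinely needs dualizability of $\qc(X_1)$ or $\qc(X_2)$ itself: dualizability of the affine terms (which the paper does supply, and which drives the analogous interchange in Proposition \ref{lem more tensor products}) only commutes the tensor past one of the two totalizations. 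A smaller point: the \u Cech presentation presumes an affine cover of each $X_i$; you can avoid any quasi-compactness discussion by instead writing $\qc(X_i)$ as the limit over all affines over $X_i$, at no cost since dualizability gives commutation with arbitrary limits.
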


\begin{proof}
Set $X=X_1\times X_2$. 
By Proposition
\ref{Thomason product}, we know that the external product takes compact objects
to compact objects, and $\qc(X)^c$ is generated by external products.

Thus it suffices to verify that for $M_i, N_i\in \qc(X_i)^c$ we have an
equivalence
$$\Hom_{X}(M_1\boxtimes M_2, N_1\boxtimes N_2) \simeq \Hom_{X_1}(M_1,N_1)
\ot \Hom_{X_2} (M_2,N_2).$$ 
Using the fact that each $M_i$ is dualizable
and $p_2$ satisfies the projection formula (since it is perfect), we calculate
\begin{eqnarray*}
\Hom_{X}(p_1^* M_1\ot p_2^* M_2, p_1^* N_1\ot p_2^* N_2) &\simeq &
\Gamma(X, p_1^* M_1^\vee \ot p_1^* N_1 \ot p_2^* M_2^\vee \ot p_2^* N_2)\\
&\simeq & \Gamma(X_2, (p_2)_*( p_1^*\intHom_{X_1}(M_1,N_2)\ot
p_2^*\intHom_{X_2}(M_2,N_2)))\\
&\simeq & \Gamma(X_2, \Hom_{X_1}(M_1,N_1)\ot \intHom_{X_2}(M_2,N_2))\\
&\simeq & \Hom_{X_1}(M_1,N_1)\ot \Hom_{X_2}(M_2,N_2)
\end{eqnarray*}
\end{proof}

We now prove our main theorem which identifies $\oo$-categories of sheaves
on fiber products algebraically.  The proof relies on the following
consequence \cite[Corollary 5.5.3.4]{topos} of the $\oo$-categorical
adjoint functor theorem: there is a canonical equivalence
$$({\mc Pr}^{\rm L})^{op} \simeq {\mc Pr}^{\rm R}$$ between the opposite of the
$\oo$-category $\mc Pr^{\rm L}$ of presentable $\oo$-categories with morphisms
left adjoints and the $\oo$-category $\mc Pr^{\rm R}$ of presentable
$\oo$-categories with morphisms right adjoints. In other words, we can
reverse diagrams of presentable $\oo$-categories, in which the
functors are all left adjoints, by passing to the corresponding right
adjoints.

We will also use the fact that the calculation of small limits of
presentable $\oo$-categories is independent of context. Namely,
by~\cite[Proposition 5.5.3.13, Theorem 5.5.3.18]{topos}, the forgetful
functors from ${\mc Pr}^{\rm L}$, ${\mc Pr}^{\rm R}$ to all
$\oo$-categories preserves small limits. In particular, given a small
diagram of both left and right adjoints, the universal maps from the
limit to the terms of the diagram are also both left and right
adjoints.

\begin{theorem}\label{Thomason tensor}
Let $X_1$, $X_2$, $Y$ be perfect stacks with maps $p_1:X_1\to Y$, $p_2:X_2\to Y$. Then
there is
a canonical equivalence
$$
\qc(X_1)\ot_{\qc(Y)} \qc(X_2) \risom \qc(X_1\times_Y X_2).
$$
\end{theorem}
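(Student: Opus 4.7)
The plan is to deduce the relative case from the absolute case $Y = \Spec k$ together with a base-change argument for affine morphisms. The absolute case follows immediately from Proposition \ref{prop external product} by applying the symmetric monoidal functor $\Ind$ of Proposition \ref{Karoubian}: since $X_1$, $X_2$, and $X_1 \times X_2$ (the latter by Proposition \ref{Thomason product}) are all perfect, passing to Ind-categories yields
$$\qc(X_1) \otimes \qc(X_2) \simeq \Ind(\qc(X_1)^c \otimes \qc(X_2)^c) \simeq \Ind(\qc(X_1 \times X_2)^c) \simeq \qc(X_1 \times X_2).$$

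For the relative case, the natural candidate map is the external tensor product over $Y$, which sends $(M_1, M_2) \in \qc(X_1) \times \qc(X_2)$ to $\pi_1^* M_1 \otimes \pi_2^* M_2 \in \qc(X_1 \times_Y X_2)$; this functor is $\qc(Y)$-bilinear and colimit-preserving in each variable, so it extends uniquely to $\qc(X_1) \otimes_{\qc(Y)} \qc(X_2)$. To show the extension is an equivalence, I reduce to a question about the diagonal via the standard identity, valid for any commutative algebra $A$ in a symmetric monoidal $\infty$-category and $A$-modules $M$, $N$,
$$M \otimes_A N \simeq (M \otimes N) \otimes_{A \otimes A} A,$$
where $A \otimes A$ acts on $M \otimes N$ through the two module structures and on $A$ via multiplication. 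Applying this with $A = \qc(Y)$, $M = \qc(X_1)$, $N = \qc(X_2)$, and using the absolute case to identify $\qc(X_1) \otimes \qc(X_2) \simeq \qc(X_1 \times X_2)$ and $\qc(Y) \otimes \qc(Y) \simeq \qc(Y \times Y)$, gives
$$\qc(X_1) \otimes_{\qc(Y)} \qc(X_2) \simeq \qc(X_1 \times X_2) \otimes_{\qc(Y \times Y)} \qc(Y).$$

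It therefore suffices to prove the following affine reduction: for any affine morphism $f: Z \to W$ and any map $g: X \to W$ of derived stacks, there is a canonical equivalence
$$\qc(Z) \otimes_{\qc(W)} \qc(X) \simeq \qc(Z \times_W X).$$
Applied with $f = \Delta: Y \to Y \times Y$ (affine because $Y$ has affine diagonal as part of being a perfect stack), $W = Y \times Y$, and $X = X_1 \times X_2$, this identifies the right side of the previous display with $\qc((X_1 \times X_2) \times_{Y \times Y} Y) = \qc(X_1 \times_Y X_2)$, completing the proof. To establish the affine reduction, I observe that since $f$ is affine, $f_*$ is both conservative and colimit-preserving (the latter via Proposition \ref{Thomason pushforwards}, as affine morphisms are perfect), so Barr-Beck identifies $\qc(Z) \simeq \Mod_{f_* \cO_Z}(\qc(W))$ as a $\qc(W)$-linear category. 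Proposition \ref{dualizable prop}(1) then gives $\qc(Z) \otimes_{\qc(W)} \qc(X) \simeq \Mod_{f_* \cO_Z}(\qc(X))$, which by change of rings equals $\Mod_{g^* f_* \cO_Z}(\qc(X))$. By the base-change equivalence $g^* f_* \cO_Z \simeq (f_X)_* \cO_{Z \times_W X}$ of Proposition \ref{Thomason pushforwards} (where $f_X: Z \times_W X \to X$ is the base change of $f$), followed by a second Barr-Beck identification for the affine morphism $f_X$, the last category is identified with $\qc(Z \times_W X)$.

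The main obstacle is the bookkeeping in the final paragraph: one must verify that the Barr-Beck equivalence $\qc(Z) \simeq \Mod_{f_* \cO_Z}(\qc(W))$ is naturally $\qc(W)$-linear, that change of rings identifies $\Mod_{f_* \cO_Z}(\qc(X))$ with $\Mod_{g^* f_* \cO_Z}(\qc(X))$ as $\qc(X)$-linear categories, and that base change for algebra objects (not merely sheaves) is natural. These are standard but delicate compatibilities in $\infty$-categorical algebra; once in place, the theorem assembles cleanly from the absolute tensor product (Proposition \ref{prop external product}), Proposition \ref{dualizable prop}(1), and base change for perfect morphisms (Proposition \ref{Thomason pushforwards}).
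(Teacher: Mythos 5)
Your argument is correct in outline, but it takes a genuinely different route from the paper's. The paper proves Theorem \ref{Thomason tensor} by resolving $X_1\times_Y X_2$ via the augmented cosimplicial diagram $X_1\times Y^{\times\bullet}\times X_2$, identifying the resulting simplicial $\infty$-category with the two-sided bar construction computing $\qc(X_1)\ot_{\qc(Y)}\qc(X_2)$, passing between ${\mc Pr}^{\rm L}$ and ${\mc Pr}^{\rm R}$ to rewrite the realization as a totalization, and then comparing the two monads $\tau_*\tau^*$ and $\pi_*\pi^*$ on $\qc(X_1\times X_2)$ via Barr--Beck and base change. You instead invoke the identity $M\ot_A N\simeq (M\ot N)\ot_{A\ot A}A$ to reduce the relative statement to base change along the affine diagonal $\Delta\colon Y\to Y\times Y$, and then prove an affine descent statement ($\qc(Z)\ot_{\qc(W)}\qc(X)\simeq\qc(Z\times_W X)$ for $f\colon Z\to W$ affine) by monadicity, the projection formula and base change from Proposition \ref{Thomason pushforwards}, together with Proposition \ref{dualizable prop}(1). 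This is essentially the toolkit the paper deploys only later, in Proposition \ref{lem more tensor products}, and your affine reduction is in fact a mild strengthening of that proposition (an affine morphism of stacks rather than an affine scheme over the base); there is no circularity, since everything you cite precedes the theorem. What the paper's route buys is that the canonical comparison functor $\tilde\pi^*$ appears directly as the augmentation, and the monadic/(co)simplicial formalism is reused verbatim in Section \ref{convolution}; what your route buys is brevity and modularity, isolating the two real inputs (the absolute K\"unneth equivalence of Proposition \ref{prop external product} and descent along the affine diagonal). The costs you should be explicit about are exactly the ones you flag, plus one more: the identity $M\ot_A N\simeq(M\ot N)\ot_{A\ot A}A$ is standard but not stated in the paper and deserves a proof or precise citation (e.g., both sides preserve colimits in each variable and agree on free modules, or a comparison of bar constructions); the $\qc(Y\times Y)$-linearity of the absolute equivalence (i.e., compatibility of external product with pullback along $p_1\times p_2$ and $\Delta$) must be checked; and one should verify that the equivalence you assemble agrees with the canonical functor induced by the $\qc(Y)$-bilinear external product over $Y$, which is the statement the rest of the paper actually uses.
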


\begin{proof}

To begin, consider the case $Y=\Spec k$, so $X_1\times_Y X_2=
X_1\times X_2$. 
By Proposition~\ref{prop external product} and 
 the fact that $\Ind:\Kar\to \mathcal Pr^{\rm L}$ is symmetric monoidal, 
the external product functor provides an equivalence
$\boxtimes:\qc(X_1)\ot \qc(X_2)\risom \qc(X_1\times X_2)$.

To begin the case of a general perfect stack $Y$, 
consider the augmented cosimplicial diagram
$$
\xymatrix{
X_1\ti_Y X_2 \ar[r]^-\pi & X_1\times X_2
 \ar@<+.5ex>[r] \ar@<-.5ex>[r] &
 X_1\times Y \times X_2 
 \ar@<+1ex>[r] \ar[r] \ar@<-1ex>[r] &
X_1\times Y\times Y \times X_2 
\cdots }$$
 with the obvious maps constructed from the given maps $p_1, p_2$.

Applying the (contravariant) functor $\qc$, we obtain an augmented simplicial $\oo$-category 
$$
\xymatrix{
\qc(X_1\ti_Y X_2) & \ar[l]_-{\pi^*}  \qc(X_1\times X_2)
&  \ar@<+.5ex>[l] \ar@<-.5ex>[l] 
 \qc(X_1\times Y \times X_2) &
 \ar@<+1ex>[l] \ar[l] \ar@<-1ex>[l] 
 \cdots }$$ with structure maps given by pullbacks.  By the absolute
case of the theorem when $Y=\Spec k$, if we forget the augmentation,
we obtain the simplicial $\infty$-category with simplices
$$\qc(X_1)\ot \qc(Y)\ot\cdots \ot\qc(Y)\ot \qc(X_2)$$ 
and structure maps given by tensor contractions. 
This is precisely 
 the two-sided bar construction
\cite[4.5]{dag2} whose geometric realization, by definition~\cite[5]{dag3},
calculates the tensor product
of $\qc(Y)$-modules
$\qc(X_1)\ot_{\qc(Y)}\qc(X_2)$.
Furthermore, the augmentation provides the natural map
$$
\xymatrix{
\qc(X_1\ti_Y X_2) & \ar[l]_-{\tilde\pi^*} \qc(X_1)\ot_{\qc(Y)}\qc(X_2)
}
$$
which we will prove is an equivalence.

The above geometric realization is a colimit in $\mc Pr^{\rm L}$, and hence (as
observed prior to the statement of the theorem) may be evaluated as a limit in the opposite category
$\mc Pr^{\rm R}$.  Thus we find that $\qc(X_1)\ot_{\qc(Y)} \qc(X_2)$ is also the
totalization of the cosimplicial $\oo$-category
$$
\xymatrix{
 \qc(X_1\times X_2)
  \ar@<+.5ex>[r] \ar@<-.5ex>[r] &
 \qc(X_1\times Y \times X_2) 
 \ar@<+1ex>[r] \ar[r] \ar@<-1ex>[r] &
 \cdots }$$ with structure maps given by pushforwards. (We note for
future reference that these structure maps are pushforwards along
affine morphisms, hence are also colimit preserving, i.e., left
adjoints.)

In
particular, pushforward along the augmentation provides a natural
functor
$$
\xymatrix{
\qc(X_1\times_Y X_2) \ar[r]^-{\tilde\pi_*} & \qc(X_1)\ot_{\qc(Y)} \qc(X_2)
}$$
which is an equivalence if and only if $\tilde\pi^*$ is an equivalence.

To summarize some of the above structure, we have a diagram of commuting left (lower arrows) and right (upper arrows) adjoints
$$
\xymatrix{
\qc(X_1\ti_Y X_2)
\ar@/^2pc/[rr]^-{\pi_*}
\ar@<+.5ex>[r]^-{\tilde\pi_*}&
\qc(X_1)\ot_{\qc(Y)} \qc(X_2)
\ar@<+.5ex>[r]^-{\tau_*} \ar@<+.5ex>[l]^-{\tilde\pi^*}&\qc(X_1 \ti X_2) \ar@<+.5ex>[l]^-{\tau^*}
\ar@/^2pc/[ll]^-{\pi^*}
\\}
$$ 
where $\tau_*$ is the universal map from the totalization to the zero cosimplices,
and likewise,  $\tau^*$ is the universal map from the zero simplices to the geometric realization.
Thus we obtain a map of monads 
$$
\xymatrix{
T_{alg} = \tau_*\tau^* \ar[r] & T_{geom} = \pi_* \pi^* 
}
$$
acting on $\qc(X_1 \ti X_2)$.

The geometric pushforward $\pi_*$ is conservative and preserves
colimits since $\pi$ is affine.  Hence by the Barr-Beck theorem, we
have a canonical equivalence
$$
\qc(X_1 \ti_Y X_2) \simeq \Mod_{T_{geom}}(\qc(X_1 \ti X_2)).
$$

We also claim that the universal map $\tau_*$ is conservative and
preserves colimits. For the first assertion, recall that $\tau_*$ is
nothing more than the forgetful map from the totalization to the zero
cosimplices. Since the $\oo$-categories involved are all stable,
evaluating conservatism of a functor is equivalent to determining if
nonzero objects are sent to zero. But an object sent to zero in the
zeroth cosimplices is sent to zero in all cosimplices, and hence is
equivalent to the zero object in the limit $\oo$-category.

To see $\tau_*$ preserves colimits, recall that the structure maps of
our cosimplicial diagram are both right and left adjoints. It then
follows (as observed prior to the statement of the theorem) that the
totalization may be evaluated equivalently back in the category $\mc
Pr^{\rm L}$. In particular, the universal functor $\tau_*$ is a
morphism in $\mc Pr^{\rm L}$, and hence a left adjoint and so
preserves colimits.  

We may now apply the Barr-Beck theorem, giving a canonical equivalence
$$
\qc(X_1)\ot_{\qc(Y)} \qc(X_2)
 \simeq \Mod_{T_{alg}}(\qc(X_1 \ti X_2)).
$$

Thus it remains to show that the above morphism of monads is an equivalence.
It is a straightforward diagram chase to check that the monad $T_{alg} = \tau_*\tau^*$
is nothing more than the composition $\pi_1^*\pi_{0*}$ of the geometric functors associated
to the initial cosimplicial maps
$$
\xymatrix{
 X_1\times X_2
 \ar@<+.5ex>[r]^-{\pi_0} \ar@<-.5ex>[r]_-{\pi_1} &
 X_1\times Y \times X_2 
 }$$
 Thus by base change, it is equivalent to the monad
 $T_{geom}=\pi_*\pi^*$.
This concludes the proof of the theorem.
\end{proof}

%


\begin{cor} \label{prop dualizable}
For $\pi:X\to Y$ any map of perfect stacks, $\qc(X)$ is
self-dual as a $\qc(Y)$-module.
\end{cor}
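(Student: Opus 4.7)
The plan is to exhibit explicit unit and counit maps making $\qc(X)$ self-dual as a $\qc(Y)$-module. Let $\Delta\colon X \to X\times_Y X$ be the diagonal. Using the identification $\qc(X)\otimes_{\qc(Y)}\qc(X) \simeq \qc(X\times_Y X)$ furnished by Theorem \ref{Thomason tensor}, I propose the coevaluation
\[
u = \Delta_*\pi^*\colon \qc(Y) \longrightarrow \qc(X\times_Y X) \simeq \qc(X)\otimes_{\qc(Y)}\qc(X)
\]
and the evaluation
\[
c = \pi_*\Delta^*\colon \qc(X)\otimes_{\qc(Y)}\qc(X) \simeq \qc(X\times_Y X) \longrightarrow \qc(Y).
\]

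First I verify that $u$ and $c$ are morphisms in the $\oo$-category of $\qc(Y)$-modules in $\mc Pr^{\rm L}$, i.e.\ that they are colimit-preserving and $\qc(Y)$-linear. The pullbacks $\pi^*$ and $\Delta^*$ are automatically colimit-preserving and $\qc(Y)$-linear. The pushforward $\Delta_*$ preserves colimits because $\Delta$ is affine (as $X$, being perfect, has affine diagonal), and $\pi_*$ preserves colimits because any morphism between perfect stacks is perfect by Corollary \ref{airated}; both pushforwards are $\qc(Y)$-linear by the projection formula established in Proposition \ref{Thomason pushforwards}.

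Next I check the zigzag identity $(c\otimes\mathrm{id}_{\qc(X)})\circ(\mathrm{id}_{\qc(X)}\otimes u)\simeq \mathrm{id}_{\qc(X)}$. Unwinding the triple tensor product via Theorem \ref{Thomason tensor} as $\qc(X\times_Y X\times_Y X)$, the functor $\mathrm{id}\otimes u$ is given geometrically as pushforward along the partial diagonal $\Delta_{23}\colon (x_1,x)\mapsto(x_1,x,x)$ composed with pullback along the first projection $p_1\colon X\times_Y X\to X$, while $c\otimes\mathrm{id}$ is pushforward along the second projection $p_2\colon X\times_Y X\to X$ composed with pullback along the partial diagonal $\Delta_{12,3}\colon (x_1,x_3)\mapsto(x_1,x_1,x_3)$. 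Applying base change (Proposition \ref{Thomason pushforwards}) to the Cartesian square
\[
\xymatrix{
X \ar[r]^-{\Delta} \ar[d]_-{\Delta} & X\times_Y X \ar[d]^-{\Delta_{23}} \\
X\times_Y X \ar[r]^-{\Delta_{12,3}} & X\times_Y X\times_Y X
}
\]
(note $\Delta_{23}$ is affine, being a base change of $\Delta$), the composition reduces to $p_{2*}\Delta_*\Delta^*p_1^*$. Since $p_1\circ\Delta=p_2\circ\Delta=\mathrm{id}_X$, both $\Delta^*p_1^*\simeq\mathrm{id}$ and $p_{2*}\Delta_*\simeq\mathrm{id}$, so the composition is the identity. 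The symmetric zigzag identity follows by an entirely analogous argument.

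The main effort is the diagrammatic unwinding of the triple tensor product under the equivalences of Theorem \ref{Thomason tensor} and the correct application of base change, but there is no genuine obstacle: the key structural ingredients — the identification of tensor products with fiber products, base change, and the projection formula — are all in place, and the zigzag identities reduce to the tautology that the diagonal is a section of either projection.
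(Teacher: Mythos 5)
Your proof is correct and follows essentially the same route as the paper's: the same unit $\Delta_*\pi^*$ and counit $\pi_*\Delta^*$ under the identification $\qc(X\times_Y X)\simeq \qc(X)\otimes_{\qc(Y)}\qc(X)$, with the zigzag identity verified by base change on the same Cartesian square and the observation that $\Delta$ is a section of both projections. The only differences are cosmetic: you check the mirror zigzag and spell out the colimit-preservation and $\qc(Y)$-linearity of $u$ and $c$, which the paper leaves implicit.
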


\begin{proof}
By Theorem \ref{Thomason tensor}, we have a canonical factorization
$\qc(X \times_Y X) \simeq \qc(X)\otimes_{\qc(Y)} \qc(X)$. Using this
identification, we can define the unit and trace by the
correspondences $u=\Delta_*\pi^*:\qc(Y) \to \qc(X \times_Y X)$ and
$\tau=\pi_*\Delta^*:\qc(X \times_Y X) \to \qc(Y)$, where $\Delta:X\to
X\times_Y X$ is the relative diagonal. We need to check that
the following composition is the identity:
$$
\xymatrix{ \qc(X)  \ar[r]^-{u \ot \on{id}} & \qc(X)\ot_{\qc(Y)} \qc(X)\ot_{\qc(Y)} \qc(X)
\ar[r]^-{\on{id}\ot\tau}  & \qc(X) }
$$
The argument is a chase in the following diagram (with Cartesian square):
$$
\xymatrix{
& 
\ar[d]_{\Delta} X \ar[r]^-{\Delta} 
& 
X\ti_Y X \ar[r]^-{\pi_1} \ar[d]^-{\Id_1\ti \Delta_{23}}
& 
X\\
X
& 
\ar[l]_-{\pi_2} X\ti_Y X \ar[r]^-{ \Delta_{12}\ti \Id_3}
& 
X \ti_{Y} X\ti_Y X
&
}
$$
Applying base change and identities for compositions, we have equivalences of functors
\begin{eqnarray*}
(\on{id}\ot\tau)\circ (u \ot \on{id}) & = & \pi_{1*} (\Id_1\ti \Delta_{23})^*(\Delta_{12}\ti \Id_3)_*\pi^*_{2}\\
& \simeq & \pi_{1*} \Delta_*\Delta^*\pi^*_{2}\\
& \simeq & \Id_{\qc(X)}
\end{eqnarray*}
\end{proof}

\begin{remark}
The above argument also shows that $\qc(X)^c$ is dualizable 
over perfect $k$-modules $\qc(\Spec k)^c$ when $X$ is smooth and proper: smoothness is required for the unit in $\qc(X\times X)$ to be
perfect and properness is required for the trace to land in perfect $k$-modules.
\end{remark}

For a derived stack $X$,
let $\cM, \cM'$ be stable presentable $\qc(X)$-modules.
To reduce notation, we write
$\Fun_X (\cM, \cM')$ for the stable presentable $\infty$-category of $\qc(X)$-linear colimit preserving
functors $\cM\to \cM'$.

\begin{cor} \label{transforms with air}
Let $X,X'$ and  $Y$ be perfect stacks with maps $X\rightarrow Y \leftarrow X'$. 
Then there is a natural equivalence of $\infty$-categories 
$$
\qc(X\times_Y X') \risom \Fun_Y (\qc(X) , \qc({X'})).
$$ 
In other words, the $\oo$-category
of integral kernels is equivalent to the
$\oo$-category of functors.
\end{cor}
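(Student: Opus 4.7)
The plan is to combine the two main technical results established just above, namely Theorem~\ref{Thomason tensor} (which identifies $\qc(X\times_Y X')$ with the relative tensor product $\qc(X)\otimes_{\qc(Y)}\qc(X')$) and Corollary~\ref{prop dualizable} (self-duality of $\qc(X)$ as a $\qc(Y)$-module), and then invoke the standard formal fact that in a closed symmetric monoidal $\infty$-category, internal Hom out of a dualizable object is computed by tensoring with its dual.

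More precisely, I would proceed in three steps. First, I would recall that the symmetric monoidal $\infty$-category of $\qc(Y)$-modules in $\mathcal Pr^{\rm L}$ is closed, with internal Hom given by the $\infty$-category $\Fun_Y(-,-)$ of $\qc(Y)$-linear colimit-preserving functors (this is the relative version of the closed structure on $\mathcal Pr^{\rm L}$ recalled in Section~\ref{monoidal infinity}). Second, I would observe the general fact that for a dualizable module $\cC$ over a symmetric monoidal $\infty$-category $\cA$, with dual $\cC^\vee$ and unit $u\colon \mathbf 1_\cA \to \cC\otimes_\cA \cC^\vee$ and trace $\tau\colon \cC^\vee\otimes_\cA\cC\to\mathbf 1_\cA$, the adjunction
\[
\Hom_\cA(\cC\otimes_\cA \cD,\cE)\;\simeq\;\Hom_\cA(\cD,\cC^\vee\otimes_\cA \cE)
\]
produces a canonical equivalence $\Fun_\cA(\cC,\cE)\simeq \cC^\vee\otimes_\cA\cE$ with inverse given by composition with $\tau$. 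Applied with $\cA = \qc(Y)$, $\cC = \qc(X)$ and $\cE = \qc(X')$, this yields
\[
\Fun_Y(\qc(X),\qc(X'))\;\simeq\;\qc(X)^\vee \otimes_{\qc(Y)} \qc(X').
\]
Third, Corollary~\ref{prop dualizable} identifies $\qc(X)^\vee$ with $\qc(X)$, and then Theorem~\ref{Thomason tensor} converts the right hand side into $\qc(X\times_Y X')$.

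The only point requiring care is that all these equivalences be genuinely canonical and implemented by the obvious integral transform. In fact the explicit unit and trace maps from the proof of Corollary~\ref{prop dualizable} are constructed from the geometric correspondences
\[
\qc(Y)\xrightarrow{\pi^*}\qc(X)\xrightarrow{\Delta_*}\qc(X\times_Y X)
\qquad\text{and}\qquad
\qc(X\times_Y X)\xrightarrow{\Delta^*}\qc(X)\xrightarrow{\pi_*}\qc(Y),
\]
so unwinding the abstract equivalence $\Fun_Y(\qc(X),\qc(X'))\simeq\qc(X)^\vee\otimes_{\qc(Y)}\qc(X')\simeq \qc(X\times_Y X')$ produces precisely the familiar integral transform: to a kernel $K\in\qc(X\times_Y X')$ it assigns the functor $M\mapsto p_{X',*}(K\otimes p_X^* M)$. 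The main (minor) obstacle is simply verifying that the two adjunctions and the tensor product theorem compose to give this explicit formula, i.e.\ tracing through the definitions carefully; everything else is a formal consequence of results already in hand.
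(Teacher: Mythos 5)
Your proposal is correct and follows essentially the same route as the paper: Theorem~\ref{Thomason tensor} plus the self-duality of $\qc(X)$ as a $\qc(Y)$-module (Corollary~\ref{prop dualizable}), together with the formal fact that internal Hom out of a dualizable module is computed by tensoring with its dual. Your extra unwinding of the explicit integral-transform formula is not needed for this corollary (the paper defers that explicit description to the relative statement, Theorem~\ref{main theorem}), but it is consistent with the paper's construction of the unit and trace.
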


\begin{proof}
The statement is an immediate consequence of Theorem \ref{Thomason tensor} and the fact that
$\qc(X)$ is self-dual (Corollary \ref{prop dualizable}). It implies that
internal hom of $\qc(Y)$-modules out of $\qc(X)$ is calculated by
tensoring with $\qc(X)^\vee\simeq\qc(X)$.
\end{proof}

\begin{remark}
The equivalence of Corollary \ref{transforms with air} is naturally
monoidal in the following sense. For perfect stacks $X,X',X''$
mapping to a perfect stack $Y$, there is a convolution map
$$\qc(X\times_Y X')\ot \qc(X'\times_Y X'') \to \qc(X\times_Y X'')$$
given by pulling back and pushing forward with respect to the triple
product $X\times_Y X' \times_Y X''$ (see Section \ref{convolution}).
On the other hand, we have a composition map $$\Fun_Y (\qc(X) ,
\qc(X'))\ot \Fun_Y(\qc(X'),\qc(X''))\to \Fun_Y(\qc(X),\qc(X'')),$$
and the equivalence of the theorem intertwines these composition maps.
\end{remark}

Finally, for a finite simplicial set  $\Sigma$, we would like to compare the formation of mapping stacks
$X^\Sigma =\Map(\Sigma, X)$ (this can be viewed
as the
cotensoring of stacks over simplicial sets) with the formation of
tensor products of $\infty$-categories $\cC\ot\Sigma$  (the tensoring of
symmetric monoidal $\infty$-categories over simplicial sets). By the notation
$\cC\ot\Sigma$, we mean the geometric realization of the simplicial
$\infty$-category given by the constant assignment of $\cC$ to each simplex
of the simpicial set $\Sigma$.

\begin{cor}\label{cor qc of finite mapping stacks}
Let $X$ be a perfect stack, and let $\Sigma$ be a finite simplicial
set.  Then there is canonical equivalence
$$\qc(X^\Sigma) \simeq\qc(X)\ot \Sigma.$$
In other words, the $\infty$-category of sheaves on the mapping stack is 
calculated as the tensor product of
 the $\infty$-categories of sheaves  on the simplices.
\end{cor}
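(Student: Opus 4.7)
The plan is to proceed by induction on the cellular structure of $\Sigma$, bootstrapping from Theorem~\ref{Thomason tensor} applied to iterated fiber products of perfect stacks.

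First I construct the comparison map. Each simplex $\sigma: \Delta^n \to \Sigma$ gives a restriction $X^\Sigma \to X^{\Delta^n} \simeq X$ (since $\Delta^n$ is contractible as a constant stack). Pullback along this map produces a functor $\qc(X) \to \qc(X^\Sigma)$, natural in $\sigma$. Assembling over all simplices and invoking the universal property of the tensor in $\mathcal{Pr}^{\rm L}$ (as the initial recipient of a $\Sigma$-indexed diagram of functors out of $\qc(X)$) gives a comparison functor
\[
\Phi_\Sigma \colon \qc(X) \otimes \Sigma \longrightarrow \qc(X^\Sigma).
\]
The goal is to show $\Phi_\Sigma$ is an equivalence for all finite $\Sigma$.

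The base case $\Sigma = *$ is immediate, since both sides equal $\qc(X)$ and $\Phi_*$ is the identity. For the inductive step, every finite simplicial set arises from a finite disjoint union of points by iteratively attaching simplices, i.e.,~as a finite sequence of pushouts of the form $\Sigma' = \Sigma \sqcup_{\partial \Delta^n} \Delta^n$. Since the cotensor $X^{(-)}$ converts colimits of spaces to limits of stacks,
\[
X^{\Sigma'} \simeq X^\Sigma \times_{X^{\partial \Delta^n}} X^{\Delta^n}.
\]
By Corollary~\ref{cor finite mapping stacks have air}, all three mapping stacks are perfect, so Theorem~\ref{Thomason tensor} yields
\[
\qc(X^{\Sigma'}) \simeq \qc(X^\Sigma) \otimes_{\qc(X^{\partial \Delta^n})} \qc(X^{\Delta^n}).
\]
On the algebraic side, the tensor $\qc(X) \otimes (-)$ preserves colimits of spaces by construction, so the pushout in simplicial sets produces the corresponding relative tensor product:
\[
\qc(X) \otimes \Sigma' \simeq (\qc(X) \otimes \Sigma) \otimes_{\qc(X) \otimes \partial \Delta^n} (\qc(X) \otimes \Delta^n).
\]
The inductive hypothesis identifies each factor on the two right-hand sides via $\Phi_\Sigma$, $\Phi_{\partial \Delta^n}$, and $\Phi_{\Delta^n}$; compatibility with the structural restriction maps then produces $\Phi_{\Sigma'}$ as an equivalence. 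The induction terminates because $\Sigma$ has only finitely many non-degenerate simplices.

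I expect the main obstacle to be a coherence check: verifying that the three comparison maps $\Phi_\Sigma, \Phi_{\partial \Delta^n}, \Phi_{\Delta^n}$ assemble compatibly with the module structures needed to descend to an equivalence of relative tensor products. Specifically, one must reconcile the $\qc(X) \otimes \partial \Delta^n$-module structure on $\qc(X) \otimes \Sigma$ (arising from the tensoring over spaces and the map $\partial \Delta^n \to \Sigma$) with the $\qc(X^{\partial \Delta^n})$-module structure on $\qc(X^\Sigma)$ (arising from restriction of sheaves along $X^\Sigma \to X^{\partial \Delta^n}$). This compatibility is enforced by naturality of pullbacks and by the universal characterization of the tensor over spaces, but it is the genuine content of the argument beyond the formal cell-by-cell induction.
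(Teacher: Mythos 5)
Your argument is correct and is essentially the paper's own proof, just written out in more detail: the paper likewise proceeds by induction on the simplices of $\Sigma$, expressing $X^\Sigma$ as an iterated fiber product of copies of $X$ over perfect stacks (via Proposition~\ref{Thomason product}, which underlies your appeal to Corollary~\ref{cor finite mapping stacks have air}) and applying Theorem~\ref{Thomason tensor} at each stage, while the pushout of simplicial sets is matched with the pushout of commutative algebra objects in $\mc Pr^{\rm L}$, i.e.\ the relative tensor product. The coherence point you flag is exactly the content left implicit in the paper's one-line proof, so nothing essential is missing.
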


\begin{proof}
We calculate $\qc(X^\Sigma)$ by induction on the simplices as an
iterated fiber product of copies of $X$ over perfect stacks (by
Proposition \ref{Thomason product}), and apply Theorem \ref{Thomason
tensor} at each stage.
\end{proof}



\subsection{General base stacks}\label{main proof}

In this section, we extend Corollary~\ref{transforms with air} 
on integral transforms 
to a relative setting where the base is allowed to be an arbitrary derived stack
with affine diagonal (not necessarily perfect).
In order to describe integral transforms relative to such a base, we
will utilize the simple behavior of $\infty$-categories of sheaves under affine base change.

\begin{prop}\label{lem more tensor products}
Let $Y$ be a derived stack with affine diagonal, and let $f: \Spec A \ra Y$ be
an affine over $Y$. Then $\Mod_A$ is a self-dual $\qc(Y)$-module.
In particular, for any $X\to Y$ there is a canonical
equivalence
$$\qc(X\times_Y \Spec A) \simeq \qc(X)\otimes_{\qc(Y)} \Mod_A.$$
\end{prop}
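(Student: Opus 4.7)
The plan is to reduce both assertions to the general dualizability statement for module categories, Proposition~\ref{dualizable prop}, applied to the commutative algebra object $B := f_*\cO_{\Spec A}$ in $\qc(Y)$. The first step is to produce a $\qc(Y)$-linear equivalence $\Mod_A \simeq \Mod_B(\qc(Y))$. Since $f$ is affine, the adjunction $f^* \dashv f_*$ satisfies the hypotheses of the $\infty$-categorical Barr--Beck theorem: $f_*$ is conservative, colimit-preserving, and $\qc(Y)$-linear (and this holds for affine morphisms into any derived stack $Y$, without perfection assumptions). The resulting monadic identification promotes $\qc(\Spec A)$ to $\Mod_B(\qc(Y))$ as $\qc(Y)$-modules, with $B$ naturally a commutative algebra object in $\qc(Y)$.

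Self-duality now follows directly from Proposition~\ref{dualizable prop}(3): $\Mod_B(\qc(Y))$ is dualizable as a $\qc(Y)$-module with dual $\Mod_{B^{\rm op}}(\qc(Y))$. Because $B$ is a commutative (in fact $\cE_\infty$) algebra object, there is a canonical equivalence $B \simeq B^{\rm op}$, so $\Mod_A$ is its own dual over $\qc(Y)$. The unit and trace for this duality can be made explicit, mirroring the argument of Corollary~\ref{prop dualizable}, using the diagonal of $\Spec A$ over $Y$ (which is a closed immersion of affine schemes when $Y$ has affine diagonal).

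For the second assertion, apply Proposition~\ref{dualizable prop}(1) with $\cC = \qc(Y)$ and $\cM = \qc(X)$ to obtain
$$\qc(X)\ot_{\qc(Y)}\Mod_A \simeq \qc(X)\ot_{\qc(Y)}\Mod_B(\qc(Y)) \simeq \Mod_{B_X}(\qc(X)),$$
where $B_X$ denotes the pullback of $B$ along $X\to Y$, regarded as a commutative algebra in $\qc(X)$. Now consider the base change square with $g: X\ti_Y \Spec A \to X$ the pullback of $f$; the morphism $g$ is affine because $f$ is and affineness is stable under base change. Base change (in the form established in Section~\ref{base change}, which applies since the structure sheaf is compact relative to any affine morphism) identifies $B_X \simeq g_*\cO_{X\ti_Y \Spec A}$, and monadicity for the affine morphism $g$ yields $\Mod_{B_X}(\qc(X)) \simeq \qc(X\ti_Y \Spec A)$. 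Composing these equivalences gives the claim.

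The main obstacle, and really the only nontrivial input beyond Proposition~\ref{dualizable prop}, is guaranteeing the monadic identification $\qc(\Spec A) \simeq \Mod_{f_*\cO_{\Spec A}}(\qc(Y))$ and its base-changed version for affine morphisms into an arbitrary (not necessarily perfect) derived stack $Y$. Once this monadicity is in hand, everything else is formal manipulation via Proposition~\ref{dualizable prop} and affine base change, so no perfectness assumption on $Y$ is needed.
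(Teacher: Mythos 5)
Your proposal is correct, and its first half is essentially the paper's argument verbatim: Barr--Beck monadicity for the affine morphism $f$ (pushforward conservative, colimit-preserving, with the projection formula identifying the monad $f_*f^*$ with $f_*A\ot(-)$) gives $\Mod_A\simeq \Mod_{f_*A}(\qc(Y))$, and then Proposition~\ref{dualizable prop} yields self-duality since $f_*A$ is commutative. For the second assertion, however, you take a genuinely different route. You apply Proposition~\ref{dualizable prop}(1) directly with $\cM=\qc(X)$ to get $\qc(X)\ot_{\qc(Y)}\Mod_A\simeq \Mod_{B}(\qc(X))$, and then identify this geometrically by base change ($\pi^*f_*\cO\simeq g_*\cO_{X\ti_Y\Spec A}$, legitimate via Proposition~\ref{Thomason pushforwards} since an affine morphism to a stack with affine diagonal is perfect) together with monadicity of the base-changed affine morphism $g\colon X\ti_Y\Spec A\to X$. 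The paper instead stays over $Y$ throughout: it writes $\qc(X)\simeq\lim_{A'\in\Aff/X}\Mod_{A'}$, uses the just-established dualizability of $\Mod_A$ to commute $(-)\ot_{\qc(Y)}\Mod_A$ past this limit, identifies each term via Proposition~\ref{dualizable prop}(2) with $\Mod_{f'_*A'\ot f_*A}(\qc(Y))\simeq\qc(\Spec A'\ti_Y\Spec A)$, and reassembles using that $\qc(-)$ turns colimits of stacks into limits and that $(-)\ti_Y\Spec A$ preserves colimits. Your argument is shorter and avoids the limit bookkeeping, at the cost of invoking base change for $f$ against the arbitrary map $X\to Y$ and monadicity of $g_*$; the paper's argument only uses affine morphisms between affines over $Y$ plus the self-duality it has just proved, which is also why the self-duality statement is placed first -- it is the engine that lets the tensor product commute with the limit presentation of $\qc(X)$. (One cosmetic point: your parenthetical that the diagonal of $\Spec A$ over $Y$ is a closed immersion is not needed and not obviously true in the derived setting; affineness of that diagonal is all one could want, and in any case the duality data already comes from Proposition~\ref{dualizable prop}(3).)
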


\begin{proof}
Since $Y$ has affine diagonal, the map $f:\Spec A\ra Y$ is a
relative affine, which implies that $f_*$ is colimit preserving and
conservative. Hence $f_*$ satisfies the monadic Barr-Beck criteria
\cite[Theorem 3.4.5]{dag2}, implying that the natural map $\Mod_A \ra \Mod_{f_*
f^*}(\qc(Y))$ is an equivalence. By the projection formula, the monad
$f_* f^*(-)$ is equivalent to the functor $f_*A \otimes (-)$, with
monad structure given by the algebra structure on $A$. As a
consequence, we see that $\Mod_A$ is equivalent to
$\Mod_{f_*A}(\qc(Y))$. Now Proposition~\ref{dualizable prop} gives
that $\Mod_{f_*A}(\qc(Y))$ is self-dual as a $\qc(Y)$-module.

Since the $\infty$-category $\Mod_A$ is a dualizable $\qc(Y)$-module it
follows that the functor 
$${(-) \otimes_{\qc(Y)} \Mod_A}$$ commutes with
limits of $\qc(Y)$-module categories. Thus we have equivalences
$$\qc(X)\ot_{\qc(Y)}\Mod_A \simeq
(\lim_{A'\in \Aff/X} \Mod_{A'}) \otimes_{\qc(Y)} \Mod_A
 \simeq \lim_{A'\in \Aff/X} (\Mod_{A'} \otimes_{\qc(Y)} \Mod_A).
$$
\noindent
Another application of Proposition~\ref{dualizable prop} implies the following equivalence
$$\lim_{A'\in \Aff/X} (\Mod_{A'} \otimes_{\qc(Y)} \Mod_A) \simeq \lim_{A'\in \Aff/X} (\Mod_{f'_*A' \ot f_*A}(\qc(Y))).$$Using that the map $f'\times f: \Spec A' \times_Y \Spec A \ra Y$ is affine and that $(f'\times f)_* (A'\boxtimes A) \simeq f'_*A'\otimes f_*A$, we obtain that the above is further equivalent to
$$\lim_{A'\in \Aff/X} (\qc(\Spec A' \times_Y \Spec A)) \simeq \qc( \colim_{A'\in \Aff/X} (\Spec A' \times_Y \Spec A))$$ 
$$\simeq \qc((\colim_{A' \in \Aff/X} \Spec A')\times_Y \Spec A)\simeq \qc(X\times_Y \Spec A)$$
\noindent
Here we have used that $\qc(-)$ sends all colimits to limits, and that $(-)\times_Y \Spec A$ commutes with colimits (since it is the left adjoint to the mapping stack over $Y$).
\end{proof}

We now show that in the
general setting where the base is an  arbitrary derived stack
with affine diagonal,
 functors continue to be given by integral kernels.

\begin{theorem} \label{main theorem}
Let $f:X \ra Y$ be a perfect map of derived stacks with affine diagonal, and let $g:X' \ra
Y$ be an arbitrary map of derived stacks. Then there is a natural
map $\qc({X\times_Y X'}) \ra \Fun_Y (\qc(X) , \qc({X'}))$ that is an
equivalence of $\infty$-categories.
\end{theorem}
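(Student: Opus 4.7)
The plan is to first reduce to the case where $X'=\Spec A$ is an affine derived scheme mapping to $Y$, using a descent argument in the $X'$ variable, and then to invoke Proposition~\ref{lem more tensor products} together with the self-duality of Corollary~\ref{prop dualizable} applied to the fiber $X_A := X\times_Y \Spec A$. Since $f$ is perfect, the fiber $X_A$ is itself a perfect stack, so the absolute results of Section~\ref{integral} will take over on each affine piece.

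For the reduction, write $X' \simeq \colim_{(\Spec A\to X')\in\Aff/X'}\Spec A$. Since pullback along $f$ preserves this colimit and $\qc(-)$ sends colimits of stacks to limits of $\infty$-categories, we obtain
$$\qc(X\times_Y X')\simeq \lim_{\Aff/X'}\qc(X\times_Y \Spec A).$$
On the other side, $\qc(X')\simeq \lim_{\Aff/X'}\Mod_A$, and since $\Fun_Y(\qc(X),-)$ preserves limits we also have
$$\Fun_Y(\qc(X),\qc(X'))\simeq \lim_{\Aff/X'}\Fun_Y(\qc(X),\Mod_A).$$
Both identifications are compatible with the integral-transform functor termwise, so the theorem reduces to the case $X'=\Spec A$.

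For the affine case, Proposition~\ref{lem more tensor products} (whose hypotheses are satisfied since $Y$ has affine diagonal) gives
$$\qc(X_A)\simeq \qc(X)\otimes_{\qc(Y)}\Mod_A.$$
On the other hand, because the structure map $h:\Spec A\to Y$ induces a symmetric monoidal colimit-preserving functor $h^*:\qc(Y)\to\Mod_A$, the standard base-change/restriction-of-scalars adjunction for module $\infty$-categories yields
$$\Fun_Y(\qc(X),\Mod_A)\simeq \Fun_{\Mod_A}\bigl(\qc(X)\otimes_{\qc(Y)}\Mod_A,\Mod_A\bigr)\simeq \Fun_{\Mod_A}(\qc(X_A),\Mod_A).$$
Thus the remaining task is to identify $\Fun_{\Mod_A}(\qc(X_A),\Mod_A)$ with $\qc(X_A)$, i.e., to exhibit $\qc(X_A)$ as self-dual over $\Mod_A$. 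But $X_A\to \Spec A$ is a morphism between perfect stacks, so Corollary~\ref{transforms with air} (or equivalently the self-duality of Corollary~\ref{prop dualizable}) supplies exactly this self-duality.

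The main technical point will be verifying that the composite equivalence agrees with the natural integral-transform functor $K\mapsto \pi_{2,*}(\pi_1^*(-)\otimes K)$, rather than merely providing an abstract equivalence of $\infty$-categories. This amounts to tracking how the unit and trace of the self-duality of Corollary~\ref{prop dualizable} --- which are themselves built geometrically from pullback and pushforward along the relative diagonal $\Delta_A:X_A\to X_A\times_A X_A$ --- interact with the base-change adjunction and with the $\Aff/X'$-limit. The crucial geometric inputs here are the base change and projection formula of Section~\ref{base change}, applied to the perfect morphisms $X\times_Y\Spec A\to \Spec A$, which guarantee that the abstract categorical compositions correspond to the expected pullback-tensor-pushforward along the correspondence $X\leftarrow X\times_Y X' \to X'$.
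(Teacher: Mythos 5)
Your proposal is correct, and it follows the paper's overall skeleton (define the kernel functor, localize in the target $X'$ over $\Aff/X'$, then invoke Proposition~\ref{lem more tensor products} and the self-duality results of Section~\ref{integral}), but it handles the key affine-target case by a genuinely different, more direct route. The paper performs a \emph{second} d\'evissage over affines $\Spec B \in \Aff/Y$ of the base: it first proves the statement when $Y$ is affine, using self-duality of $\qc(X)$ over $\Mod_B$, and then for general $Y$ uses dualizability of $\Mod_A$ over $\qc(Y)$ to commute $\Mod_A\ot_{\qc(Y)}(-)$ with the limit over $\Aff/Y$, together with the induction/restriction adjunction, to reduce to that affine-base case. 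You instead collapse this in one step: apply Proposition~\ref{lem more tensor products} (which is already stated over an arbitrary base with affine diagonal) to identify $\qc(X\ti_Y\Spec A)\simeq \qc(X)\ot_{\qc(Y)}\Mod_A$, pass through the induction/restriction adjunction along $\qc(Y)\to\Mod_A$ to get $\Fun_Y(\qc(X),\Mod_A)\simeq\Fun_{\Mod_A}(\qc(X\ti_Y\Spec A),\Mod_A)$, and then use self-duality of the fiber $\qc(X\ti_Y\Spec A)$ over $\Mod_A$, which is available from Corollary~\ref{prop dualizable} precisely because the perfection of the morphism $f$ makes $X\ti_Y\Spec A$ a perfect stack and $\Spec A$ is perfect. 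Each ingredient you use is established before Theorem~\ref{main theorem} in the paper, so the shortcut is legitimate; what the paper's longer route buys is that every duality statement is only ever invoked relative to an affine base appearing in a \u Cech-type limit, which makes the bookkeeping of the equivalences slightly more mechanical, while your route saves the entire $\Aff/Y$ limit argument. Both arguments leave the same final compatibility check implicit --- that the composite abstract equivalence agrees with the functor $M\mapsto\tilde f_*(M\ot\tilde g^*(-))$ defined at the outset --- and you correctly identify this, along with the base change and projection formula of Section~\ref{base change} needed to define that functor and to track the duality data, as the remaining point to verify.
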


\begin{proof} Consider the
Cartesian diagram
$$
\xymatrix{
X\times_Y X' \ar[r]^-{\tilde f} \ar[d]^-{\tilde g} & X' \ar[d]^-{g}\\
X \ar[r]^-f & Y\\}
$$

We define a functor $\qc({X\times_Y X'}) \ra \Fun (\qc(X) ,
\qc({X'}))$ by sending a quasi-coherent sheaf $M \in \qc(X \times_Y
X')$ to the functor $\tilde{f}_* (M\otimes \tilde{g}^* - )$. This is
a colimit preserving functor, since $\tilde f$ is perfect. Furthermore,
using 
the projection
formula for the map $\tilde f$, this functor naturally admits the
extra structure of $\qc(Y)$-linearity as follows
$$
\tilde{f}_* (M\otimes \tilde{g}^* -) \otimes g^* V \simeq
\tilde{f}_* (M\otimes \tilde{g}^* (-)  \otimes \tilde f^* g^* V) $$
$$\simeq \tilde{f}_* (M\otimes \tilde{g}^* (-)  \otimes \tilde g^* f^* V)
\simeq \tilde{f}_* (M\otimes \tilde{g}^* (- \otimes f^* V)),
$$
for any $V\in\qc(Y)$. Therefore, we in fact obtain a functor
$\qc(X\times_Y X') \ra \Fun_Y (\qc(X) , \qc(X'))$, and the rest of this proof
will be devoted to showing it is an equivalence.

Recall that the $\infty$-category of quasi-coherent sheaves on $X'$ is given by the limit
$$
\qc({X'}) \simeq \lim_{ \Aff/X'} \Mod_A.
$$
By working locally in the target,
this provides a description of the $\infty$-category of $\qc(Y)$-linear functors with values in $\qc({X'})$ as
the limit
$$
\Fun_Y (\qc(X) , \qc({X'})) \simeq \Fun_Y (\qc(X) , \lim_{\Aff/X'} \Mod_A) \simeq
\lim_{\Aff/X'} \Fun_Y (\qc(X) , \Mod_A)
$$

Likewise,
we have a description of the $\infty$-category of quasi-coherent sheaves on
the fiber product $X\times_Y X'$ as a limit
$$
\qc({X\times_Y X'}) \simeq \lim_{\Aff/X'} \qc({X\times_Y \Spec A}).
$$
This follows from the fact that the functor $\qc(-)$ takes colimits
to limits, and the fiber product functor $X\times_Y(-)$ commutes
with all colimits (because it has a right adjoint).

Now one can analyze the functor
$\qc({X\times_Y X'}) \ra \Fun_Y (\qc(X) , \qc(X'))$
by considering the terms in the above two limits. That is, to prove the theorem,
it suffices to prove it locally in the target $X'$: for any $\Spec A \ra X'$, we must show that the functor
$$
\qc({X\times_Y \Spec A}) \ra \Fun_Y (\qc(X) , \Mod_A)$$
is an equivalence.

We will prove this in two steps. First, we will deal with case that the base $Y$ is affine.
Afterward, we will use this case to deal with a general base $Y$.

So assume for the time being that $Y= \Spec B$. Then $X$ is a perfect stack
over $B$, and by Corollary~\ref{prop dualizable}, $\qc(X)$
is a self-dual $\Mod_B$-module. Thus we have equivalences
$$
\Fun_B (\qc(X) , \Mod_A) \simeq \Fun_B (\Mod_B , \qc(X)^\vee \otimes_B \Mod_A) \simeq
\qc(X) \otimes_B \Mod_A.
$$

By Proposition~\ref{lem more tensor products} we know that the
functor $\qc(-)$ takes affine base change to tensor product of
$\infty$-categories. Therefore we have an equivalence
$$\qc({X\times_{B} \Spec A}) \simeq \qc(X) \otimes_B \Mod_A.
$$
Putting together the above equivalences, we conclude that we have equivalences
$$
\qc({X\times_{B}  \Spec A}) \simeq
\qc(X) \otimes_B \Mod_A
\simeq
\Fun_B (\qc(X) , \Mod_A),
$$
This proves the theorem
when $Y$ is affine.

Working locally in the base $Y$, we will now use the above
discussion to prove the theorem in general. As above, since $\qc(-)$
and fiber products behave well with respect to colimits we can
calculate the $\infty$-category of quasi-coherent sheaves on the fiber product
$X\times_Y \Spec A$ as the limit
$$
\qc(X\times_Y \Spec A) \simeq \lim_{B\in \Aff/Y} \qc(X\times_Y \Spec
B \times_Y \Spec A).
$$

To calculate the $\infty$-category of functors, we use the following: by
Proposition~\ref{lem more tensor products}, the $\infty$-category $\Mod_A$ is
a dualizable $\qc(Y)$-module and hence the functor $ \Mod_A
\otimes_{\qc(Y)}(-) $ commutes with all limits. Therefore we have an
equivalence
$$\Mod_A \simeq \Mod_A \otimes_{\qc(Y)} (\lim_{B\in \Aff/Y} \Mod_B)
\simeq \lim_{B\in \Aff/Y} \Mod_A \otimes_{\qc(Y)} \Mod_B,
$$
and so in particular we obtain equivalences
$$
\Fun_Y (\qc(X) , \Mod_A) \simeq
\Fun_Y (\qc({X}) , \lim_{B\in \Aff/Y} \Mod_A \otimes_{\qc(Y)} \Mod_B)
$$
$$
\simeq
\lim_{B\in \Aff/Y}\Fun_Y (\qc({X}) ,  \Mod_A \otimes_{\qc(Y)} \Mod_B).
$$

By the adjunction between induction and restriction, and a repeated application
of Proposition~\ref{lem more tensor products}, we also have equivalences
$$
\Fun_Y (\qc({X}) , \Mod_A \otimes_{\qc(Y)} \Mod_B)
\simeq
\Fun_B (\qc(X) \otimes_{\qc(Y)} \Mod_B , \Mod_A \otimes_{\qc(Y)} \Mod_B).
$$
$$
\simeq
\Fun_B (\qc({X\times_Y \Spec B}) , \qc({\Spec A\times_Y \Spec B}))
$$

Finally, by the above discussion and the affine case of the theorem with base $\Spec B$,
we obtain the following chain of equivalences
$$
\qc({X\times_Y \Spec A})
\simeq
\lim_{B\in \Aff/Y} \qc({X\times_Y \Spec B\times_Y \Spec A})
$$
$$
\simeq
\lim_{B\in \Aff/Y} \Fun_B (\qc({X\times_Y B}) , \qc({A\times_Y \Spec B})) \simeq
\Fun_Y (\qc(X) , \Mod_A).
$$
Since we previously reduced the theorem to the case when $X'=\Spec A$,
this completes the proof.
\end{proof}

\section{Applications}\label{applications}
In this section, we apply the results of Section \ref{integral} to
calculate the centers and traces (and their higher
$\cE_n$-analogues) of symmetric monoidal $\infty$-categories of
quasi-coherent sheaves. We also
calculate  centers and traces of $\infty$-categories of linear endofunctors
(equivalently  by Corollary~\ref{transforms with air},
quasi-coherent sheaves on fiber products)
with monoidal structure given by composition (equivalently, convolution over the base).


\subsection{Centers and traces}\label{defining centers}
We begin with a discussion of the general notions of centers and traces
of associative algebra objects
in closed symmetric monoidal $\infty$-categories. This is a general version
of the approach to topological Hochschild homology developed in \cite{EKMM,shipley}.
We then calculate the center and trace of the symmetric monoidal $\oo$-category 
of sheaves $\qc(X)$ on a perfect stack $X$ (where we think of $\qc(X)$
as an associative algebra object in the closed symmetric monoidal $\oo$-category $\mathcal Pr^{\rm L}$
of presentable $\oo$-categories).

Let $\cS$ be a symmetric monoidal presentable $\infty$-category.
Recall that an associative
algebra structure on $A\in \cS$ is an object 
is equivalent to the structure of algebra over the $\cE_1$ operad.
We briefly recall the $\infty$-category versions of 
two familiar facts from classical algebra. 

First, there is the notion of the opposite associative algebra $A^{\rm op}\in \cS$.
For this, recall that there is a map of operads $\tau: \cE_1 \ra \cE_1$ determined by the action of the symmetric group $\Sigma_2$ on the two contractible subspaces of $\cE_1(2)$, and such that $\tau\circ \tau$ is equivalent to the identity. Given an $\cE_1$-algebra $A$, 
the pullback $\tau^* A$ is by definition the opposite $\cE_1$-algebra $A^{\rm op}$. 

Second, given two associative algebras $A, B\in \cS$,
their monoidal product $A\ot B\in \cS$ 
carries a natural
associative algebra structure. 
 For this, recall that given two algebras $A$, $B$ over any topological operad $\cO$, we have the structure of an $\cO\times \cO$-algebra on the monoidal product $A\ot B$. 
 Since the term-wise diagonal map gives a map of operads $\cO\ra \cO\times \cO$, 
 we obtain an $\cO$-algebra structure on $A\ot B$
 by restriction along the diagonal.
 
Furthermore, any associative algebra $A\in \cS$  is a left (as well as a right) module object over the associative algebra
$A\ot A^{\rm op}$ via left and right multiplication.

Now assume further that $\cS$ is a closed symmetric monoidal $\infty$-category.
Then we have internal hom objects
\cite[2.7]{dag2}, and given $A\ot A^{\rm op}$-modules $M,N$, we can define the $A\ot A^{\rm op}$-linear
morphism object $\intHom_{A\ot A^{\rm op}}(M, N)\in \cS$.
 Likewise, given left and right $A\ot A^{\rm op}$ modules
$M,N\in\cS$ we have a pairing $M\ot_{A\ot A^{\rm op}} N\in \cS$ defined by the two-sided
bar construction  \cite[4.5]{dag2} over $A\ot A^{\rm op}$.

\begin{definition} \label{def center}\
Let $A$ be an associative algebra object in a closed symmetric monoidal
$\oo$-category~$\cS$.

\begin{enumerate}

\item The derived center or Hochschild cohomology $\cZ(A)=\hh^*(A)\in \cS$
is the endomorphism object $\intEnd_{A\ot A^{\rm op}}(A)$ of $A$ as an $A$-bimodule.

\item The derived trace or Hochschild homology $\Tr(A)=\hh_*(A)\in \cS$ is 
the pairing object ${A\ot_{A\ot A^{\rm op}} A}$
 of $A$ with itself as an $A$-bimodule.

\end{enumerate} 
\end{definition}

In general, the center $\cZ(A)$ is again
an associative algebra object in $\cS$, and the trace $\Tr(A)$ is an $A$-module object in $\cS$.
Furthermore, 
$\cZ(A)$ comes with a canonical central morphism 
$$
\xymatrix{
\fz:\cZ(A)
\ar[r] &
A 
&&
F\ar@{|->}[r]
& 
F(1_A)
}
$$ 
while $\Tr(A)$ comes with a canonical trace morphism
$$
\xymatrix{
\ftr:A 
\ar[r] &
\Tr(A)
&&
A\ar@{|->}[r]
& 
A\ot_{A\otimes A} 1_A
}
$$ 
coequalizing left and right multiplication. 
When $A$ is in fact symmetric, $\cZ(A)$ and $\Tr(A)$ are again naturally symmetric
algebra objects in $\cS$, though the symmetric algebra structure on $\cZ(A)$
is different from its general associative algebra structure.

\subsubsection{Cyclic bar construction}\label{hochschild complexes}
It is very useful to have versions of the Hochschild chain and cochain
complexes which calculate the trace and center.
In the setting of an associative algebra object $A$ in a monoidal $\infty$-category
$\cS$, they will take the form of a simplicial object $\mathbf{N}^{cyc}_*(A)$ 
and cosimplicial object $\mathbf{N}_{cyc}^*(A)$ such that the geometric realization
 $\colim \mathbf{N}^{cyc}_*(A)$ is the trace $\Tr(A)$ and the
totalization  $\lim \mathbf{N}_{cyc}^*(A)$ is the center $\cZ(A)$. 
(Note that simplicial and cosimplicial 
objects here are taken in the $\infty$-categorical sense, and so the diagram identities
hold up to coherent homotopies as in, e.g., \cite{angeltveit}, where the cyclic bar
construction for $A_\infty$ H-spaces is developed.)

We construct the simplicial object $\mathbf{N}^{cyc}_*(A)$
and cosimplicial object $\mathbf{N}_{cyc}^*(A)$
 as follows. 
 Consider the adjunction
$$
\xymatrix{\cS\ar@/^1pc/[r]^-{F}&\m_{A}(\cS) \ar[l]^-{G}
}
$$
where $F(-) =A\ot -$ is the induction, and $G$ is the forgetful functor.
It determines a comonad $S \simeq FG$ acting on $\m_A(\cS)$.

As in classical algebra,
for any $A$-module $M$, the comonad $S$ provides a canonical augmented
simplicial object $C_*(M)$ with terms 
$$
C_{n-1}(M) \simeq (FG)^{n} (M) \simeq A^{\ot n} \ot M
$$ 
such that the geometric realization of $C_*(M)$ is naturally equivalent to $M$.
We will say that $C_*(M)$ is a simplicial resolution of $M$.

We can apply this technique to produce the familiar cyclic bar construction. 
Consider the special case of the above adjunction
 $$
\xymatrix{ \m_{A^{\rm op}}(\cS)\ar@/^1pc/[r]^-{F}&\m_{A\ot A^{\rm op}}(\cS) \ar[l]^-G
}
$$
where $F(-) =A\ot -$ again is the induction, and $G$ is the forgetful functor from $A$-bimodules to right $A$-modules. 
Using the comonad $S\simeq FG$, 
we obtain a simplicial resolution $C_*(A)$ of the $A$-bimodule $A$
whose terms 
$C_{n-1}(A)\simeq A^{\ot n+1}$ are $A$-bimodules which are free as left $A$-modules.

Now recall that the trace $\Tr(A)$ is defined by the self-pairing
$A\ot_{A\ot A^{\rm op}}A$. Since the tensor product commutes with colimits, in particular geometric realizations, we calculate
$$
\Tr(A) = A\ot_{A\ot A^{\rm op}}A \simeq A\ot_{A\ot A^{\rm op}}|C_*(A)|
\simeq |A\ot_{A\ot A^{\rm op}} C_*(A)|.
$$
Thus the geometric realization of $A\ot_{A\ot A^{\rm op}} C_*(A)$
calculates the trace $\Tr(A)$.

We write $\mathbf{N}^{cyc}_*(A)$
for the simplicial object $A\ot_{A\ot A^{\rm op}} C_*(A)$ and refer to it as the Hochschild
chain complex.
Since $A$ is free as a right $A$-module, the terms of the simplicial object $C_*(A)$ 
are free as $A$-bimodules. Thus we can evaluate the terms of the Hochschild
chain complex
$$
\mathbf{N}^{cyc}_n(A) = A\ot_{A\ot A^{\rm op}} C_n(A)
\simeq
A\ot_{A\ot A^{\rm op}} A^{\ot n+2} \simeq A^{\ot n+1}
$$ 
In particular, there are equivalences $\mathbf{N}^{cyc}_0(A) \simeq A$ and $\mathbf{N}^{cyc}_1(A) \simeq A\ot A$, and the two simplicial maps $A\ot A \ra A$ are 
the multiplication and the opposite multiplication of $A$. 

Similarly, recall 
that the center $\cZ(A)$ is defined by the endomorphisms
$\intEnd_{A\ot A^{\rm op}}(A)$. Since morphisms take colimits in the domain to limits, 
in particular geometric realizations to totalizations, we calculate
$$
\cZ(A) = \intEnd_{A\ot A^{\rm op}}(A)
\simeq  \intHom_{A\ot A^{ op}}(|C_*(A)|, A)
\simeq |\intHom_{A\ot A^{ op}}(C_*(A), A)|
$$
Thus the totalization of $\intHom_{A\ot A^{ op}}(C_*(A), A)$
calculates the center $\cZ(A)$.

We write $\mathbf{N}_{cyc}^*(A)$
for the cosimplicial object $\intHom_{A\ot A^{ op}}(C_*(A), A)$
 and refer to it as the Hochschild cochain complex.
As before, we can evaluate the terms of the
Hochschild
cochain complex
$$
\mathbf{N}_{cyc}^n(A) = \intHom_{A\ot A^{ op}}(C_n(A), A)
\simeq
\intHom_{A\ot A^{ op}}(A^{\ot n+2}, A)
\simeq
\intHom(A^{\ot n}, A). 
$$ 
In particular, there are equivalences $\mathbf{N}_{cyc}^0(A) \simeq A$ and $\mathbf{N}_{cyc}^1(A) \simeq \intHom(A, A)$, and the two cosimplicial maps $A\to \intHom(A, A)$ are induced by
the left and right multiplication of $A$.

\subsubsection{Centers and traces for monoidal $\oo$-categories}
We will apply the above constructions in the following setting.
We will always take $\cS$ to be the $\oo$-category
${\mc Pr}^{\rm L}$ of presentable $\oo$-categories (with morphisms left adjoints).
Then a monoidal presentable $\infty$-category $\cC$
is an associative algebra
object in $\mc Pr^{\rm L}$.
Thus we have the notion of its 
center (or Hochschild cohomology category) $\cZ(\cC)=\Fun_{\cC\otimes
\cC^{\rm op}} (\cC , \cC)\in \mc Pr^{\rm L}$, and its trace (or
Hochschild homology category) $\Tr(\cC) =\cC\ot_{\cC\ot \cC^{\rm op}} \cC\in \mc Pr^{\rm L}$.

%



Now we will specialize further to a geometric setting.
Let $X$ be a perfect stack, and take $\cC$
to be the presentable stable $\oo$-category $\qc(X)$ 
equipped 
with its (symmetric) monoidal tensor product.
To calculate the center and trace of $\qc(X)$,
we introduce the loop space
$$
\cL X=\Map(S^1, X) \simeq X \times_{X\times X} X
$$
where the fiber product is along two copies of the diagonal
map. 

\begin{cor} \label{cor center} Let $X$ be a perfect stack,
and equip $\qc(X)$ with its (symmetric) monoidal algebra structure given by tensor product.
Then there are canonical equivalences (of symmetric monoidal) $\infty$-categories
$$
\qc(\cL X)\simeq \cZ(\qc(X)) \simeq \Tr(\qc(X))
$$
\end{cor}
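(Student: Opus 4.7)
The plan is to unwind the definitions of center and trace given in Definition \ref{def center} and recognize each side as a fiber product of categories of sheaves, then apply the main structural theorems of Section \ref{integral}. Specifically, since $\qc(X)$ is a symmetric monoidal presentable $\oo$-category, the opposite algebra $\qc(X)^{\rm op}$ is canonically equivalent to $\qc(X)$ as an $\cE_1$-algebra object in $\mc Pr^{\rm L}$, and Theorem \ref{Thomason tensor} applied to $X \to \Spec k \leftarrow X$ identifies
$$\qc(X) \otimes \qc(X)^{\rm op} \simeq \qc(X) \otimes \qc(X) \simeq \qc(X \times X).$$
Under this equivalence, the $\qc(X) \otimes \qc(X)^{\rm op}$-bimodule structure on $\qc(X)$ arises from pullback along the diagonal $\Delta \colon X \to X \times X$. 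Note that $X \times X$ is perfect (Proposition \ref{Thomason product}) and has affine diagonal, so all the subsequent main theorems are available over this base.

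For the trace, I would simply compute, using the relative form of Theorem \ref{Thomason tensor} applied over the perfect base $X \times X$:
$$\Tr(\qc(X)) = \qc(X) \otimes_{\qc(X) \otimes \qc(X)^{\rm op}} \qc(X) \simeq \qc(X) \otimes_{\qc(X \times X)} \qc(X) \simeq \qc(X \times_{X \times X} X) \simeq \qc(\cL X),$$
where the last step uses the definition $\cL X = X \times_{X \times X} X$.

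For the center, I would invoke Corollary \ref{transforms with air} (or equivalently Theorem \ref{main theorem}) with $Y = X \times X$ and $X' = X$, both maps being the diagonal:
$$\cZ(\qc(X)) = \Fun_{\qc(X) \otimes \qc(X)^{\rm op}}(\qc(X), \qc(X)) \simeq \Fun_{\qc(X \times X)}(\qc(X), \qc(X)) \simeq \qc(X \times_{X \times X} X) \simeq \qc(\cL X).$$
Here I use that $\Delta \colon X \to X \times X$ is a perfect morphism (Corollary \ref{airated}), which is the hypothesis required to apply the main theorem. The coincidence of center and trace is thus a formal shadow of the self-duality of $\qc(X)$ over $\qc(X \times X)$ established in Corollary \ref{prop dualizable}: the trace and center of a self-dual bimodule algebra over a symmetric monoidal base automatically agree.

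The only genuinely delicate point in the write-up will be checking the parenthetical claim about symmetric monoidal structures: both $\cZ$ and $\Tr$ of a commutative algebra object in a symmetric monoidal $\oo$-category inherit natural symmetric monoidal structures (for $\cZ$ this refines the canonical $\cE_1$-structure from composition, and for $\Tr$ it comes from the $\cE_\infty$-algebra structure on $\qc(X)$ making the bar construction multiplicative). Under the equivalences above these should match the pointwise tensor product on $\qc(\cL X)$ inherited from the symmetric monoidal structure on $\qc(X \times_{X \times X} X)$; to verify this one traces through the construction of the symmetric monoidal refinement of Theorem \ref{Thomason tensor}, which is essentially formal given that all the identifications above are produced by pullback and pushforward along canonical diagrams of perfect stacks. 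This last compatibility is the step I would expect to require the most care, but it is not where the geometric content lies.
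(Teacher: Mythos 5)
Your proposal is correct and follows essentially the same route as the paper: the trace is computed by applying Theorem \ref{Thomason tensor} to the two diagonal maps $X \to X\times X$ (after identifying $\qc(X)\ot\qc(X)^{\rm op}\simeq \qc(X\ti X)$), and the center by applying Corollary \ref{transforms with air} over the perfect base $X\ti X$, so both coincide with $\qc(X\ti_{X\ti X}X)\simeq \qc(\cL X)$. The extra care you flag about matching symmetric monoidal refinements is likewise left implicit in the paper's two-line proof, so there is nothing to change in substance.
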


\begin{proof}

By Theorem \ref{Thomason tensor}, we know that $\Tr(\qc(X))$,
which is a tensor product, is also calculated by a fiber
product 
$$\Tr(\qc(X))\simeq
\qc(X\times_{X\times X} X).
$$ 

On the other hand, by
Corollary \ref{transforms with air}, we know that $\cZ(\qc(X))$, 
which consists of functors, is also calculated by a tensor
product
$$\cZ(\qc(X))\simeq \Tr(\qc(X)).
$$
\end{proof}






\subsection{Centers of convolution categories}\label{convolution}
Let $p:X\to Y$ be a map of perfect stacks satisfying descent. 
Consider the
convolution diagram
$$
\xymatrix{
& \ar[dl]_-{p_{12}}\ar[dr]^-{p_{23}} \ar[d]_{p_{13}}X \times_Y X \times_Y X & \\
\XYX & \XYX & \XYX }
$$
Equip $\qc(X\times_Y X)$ with the monoidal product defined by
convolution
$$
M\star N = p_{13*}(p_{12}^*(M) \ot p_{23}^*(N)).
$$
By Theorem~\ref{main theorem}, we have a monoidal equivalence
$$
\qc(\XYX)\simeq \Fun_Y(\qc(X), \qc(X)).
$$

Consider the fundamental correspondence
$$
\xymatrix{
 \cL Y  = Y\ti_{Y\ti Y} Y & \cL Y \ti_Y X =X \ti_{X \ti Y} X \ar[l]_-{\pi} \ar[r]^-{\delta} & X\ti_Y X.\\
}
$$
where the maps are defined by the formulas
$$
\pi = \Id_{\cL Y} \ti_{\Id_Y} p =  p \times_{p \times \Id_Y} p \qquad \delta = \Id_X
\times_{\pi_Y} \Id_X.
$$
where $\pi_Y:X\ti Y\to Y$ is the obvious projection.

Passing to sheaves, we obtain a diagram
$$
\xymatrix{
\qc(\cL Y)   \ar[r]^-{\pi^*}&\qc(\cL Y \ti_Y X)  \ar[r]^-{\delta_*} & \qc(X\ti_Y X)\\
}
$$
which by Theorem~\ref{main theorem}
admits the interpretation
$$
\xymatrix{ \Fun_{Y\ti Y}(\qc(Y), \qc(Y))  \ar[r]^-{\pi^*} &
\Fun_{X\ti Y}(\qc(X), \qc(X)) \ar[r]^-{\delta_*} &
\Fun_{Y}(\qc(X), \qc(X)) .\\
}
$$
where $\pi^*$ is the $\qc(X)$-linear induction, and $\delta_*$
forgets the $\qc(X)$-linear structure.

\medskip

The aim of this section is to prove the following.

\begin{theorem}\label{thm convolution} Suppose $p: X\to Y$ is
a map of perfect stacks satisfying descent.
Then there is a canonical equivalence
$$
\cZ(\qc(\XYX)) \simeq \qc(\cL Y)
$$
such that the forgetful functor $\fz:\cZ(\qc(\XYX)) \to \qc(\XYX)$ is given by the correspondence
$\delta_*\pi^*:\qc(\cL Y)\to \qc(\XYX)$.
\end{theorem}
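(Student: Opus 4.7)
The plan is to deduce the theorem from the calculation of Corollary \ref{cor center} via a Morita argument based on the bimodule $\qc(X)$. By Theorem \ref{main theorem}, we already have a monoidal equivalence $\qc(X \times_Y X) \simeq \Fun_Y(\qc(X), \qc(X))$, with convolution on the left corresponding to composition on the right, so that $\qc(X)$ is canonically a module category for $\qc(X \times_Y X)$. The descent assumption on $p$ means that the adjunction $p^* \dashv p_*$ between $\qc(Y)$ and $\qc(X)$ satisfies the hypotheses of the $\infty$-categorical Barr-Beck theorem \cite[Theorem 3.4.5]{dag2}, which produces an identification
$$\qc(Y) \simeq \Fun_{\qc(X \times_Y X)}(\qc(X), \qc(X)).$$
Thus $\qc(X)$ serves as a Morita equivalence bimodule between the monoidal $\infty$-categories $\qc(Y)$ and $\qc(X \times_Y X)$: each is recovered as the commutant of the other's action on $\qc(X)$.

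The first main step is Morita invariance of the $\cE_1$-center. Applied to our setting, it yields a canonical equivalence $\cZ(\qc(X \times_Y X)) \simeq \cZ(\qc(Y))$, which combined with Corollary \ref{cor center} gives the desired $\cZ(\qc(X \times_Y X)) \simeq \qc(\cL Y)$. Conceptually, one expresses both centers as the endomorphism object of the identity functor of $\qc(X)$ in an appropriate $(\infty,2)$-category of module categories; descent ensures that $\qc(Y)$-linearity and $\qc(X \times_Y X)$-linearity of a natural transformation of $\Id_{\qc(X)}$ are equivalent notions, so the two centers coincide.

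To identify the central functor $\fz$, we trace through the Morita transport. A central element $z \in \qc(\cL Y) \simeq \cZ(\qc(Y))$ acts on the unit $\cO_Y$ of $\qc(Y)$ as tensoring with $p_{\cL Y, *}(z)$, where $p_{\cL Y}: \cL Y \to Y$ is the projection. Viewed as a central element of $\qc(X \times_Y X)$, it therefore acts on the identity functor of $\qc(X)$ by tensoring with $p^* p_{\cL Y, *}(z)$; the corresponding integral kernel in $\qc(X \times_Y X)$ is $\Delta_* p^* p_{\cL Y, *}(z)$, where $\Delta : X \to X \times_Y X$ is the diagonal. Base change along the Cartesian square
$$
\xymatrix{
\cL Y \times_Y X \ar[r]^-{\pi'} \ar[d]_-{\pi} & X \ar[d]^-p \\
\cL Y \ar[r]_-{p_{\cL Y}} & Y
}
$$
replaces $p^* p_{\cL Y, *}$ by $\pi'_* \pi^*$, and one checks directly that $\Delta \circ \pi' \simeq \delta$ as maps $\cL Y \times_Y X \to X \times_Y X$. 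This yields $\fz(z) \simeq \delta_* \pi^*(z)$ as claimed.

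The main obstacle is establishing Morita invariance of the \emph{absolute} $\cE_1$-center (as an algebra object in $\mc Pr^{\rm L}$), as opposed to the potentially easier relative center over $\qc(Y)$. One must carefully combine the descent hypothesis with the Barr-Beck formalism to ensure that the relevant endomorphism $\infty$-categories of the identity bimodule on $\qc(X)$ are the same whether computed in $\qc(Y)$-modules or in $\qc(X \times_Y X)$-modules. A more hands-on alternative, avoiding a generic Morita statement, is to compute $\cZ(\qc(X \times_Y X))$ directly as a totalization of the Hochschild cochain complex of Section \ref{hochschild complexes}, using Theorem \ref{Thomason tensor} to identify each term geometrically, and then recognizing the resulting cosimplicial diagram as a standard cosimplicial presentation of $\qc(\cL Y)$ via descent from the simplicial resolution $X \times_Y \cdots \times_Y X \to Y$.
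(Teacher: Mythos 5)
Your main route has a genuine gap, which you yourself flag but do not close, and it is essentially as deep as the theorem: you need both that descent makes $\qc(X)$ a Morita bimodule between $\qc(Y)$ and $\qc(\XYX)$, i.e.\ $\qc(Y)\simeq \Fun_{\qc(\XYX)}(\qc(X),\qc(X))$, and that the absolute $\cE_1$-center in $\mc Pr^{\rm L}$ is invariant under this relation. Barr--Beck for $p^*\dashv p_*$ is not enough: descent identifies $\qc(Y)$ with comodules over the comonad $p^*p_*$ (a limit over the \v{C}ech diagram), and passing from that to the commutant of the convolution algebra acting on $\qc(X)$, and then from ``double commutant'' to ``equal centers,'' is precisely where the work lies. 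The paper avoids any generic Morita statement: it resolves the unit $A$-bimodule of $A=\qc(\XYX)$ by the bar construction relative to the monoidal functor $\Delta_*:\qc(X)\to\qc(\XYX)$, identifies the terms of the resulting relative Hochschild cochain complex, via Theorem \ref{main theorem} and Theorem \ref{Thomason tensor}, with $\qc(\cL Y\ti_Y X\ti_Y\cdots\ti_Y X)$, recognizes this cosimplicial diagram as the \v{C}ech diagram of $\cL Y\ti_Y X\to \cL Y$, and concludes by descent. Your ``hands-on alternative'' is this argument in outline, but as stated it uses the absolute complex of Section \ref{hochschild complexes}, whose terms are sheaves on absolute products of copies of $\XYX$; that diagram is not the \v{C}ech presentation you describe, so the relative complex over $\qc(X)$ is not an optional refinement but the key device.

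The identification of the central functor is also incorrect as argued. The claim $\Delta\circ\pi'\simeq\delta$ fails: in loop coordinates $\delta$ sends a pair $(x,\gamma)$, with $\gamma$ a loop at $p(x)$, to the point $(x,x)\in\XYX$ whose identification of images in $Y$ is $\gamma$, whereas $\Delta\circ\pi'$ uses the constant loop. For $X=\Spec k$, $Y=BG$ one has $\XYX\simeq G$ and $\cL Y\ti_Y X\simeq G$; then $\delta$ is the identity of $G$ while $\Delta\circ\pi'$ is the constant map at the unit element, and $\delta_*\pi^*z$ (the underlying sheaf on $G$ of an adjoint-equivariant sheaf $z$) is certainly not $\Delta_*p^*p_{\cL Y,*}z$ (a pushforward supported at the unit). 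The error originates one step earlier: a central object $z\in\cZ(\qc(Y))\simeq\qc(\cL Y)$ does not act on $\Id_{\qc(X)}$ merely through its underlying object $p_{\cL Y,*}z\in\qc(Y)$; the half-braiding --- exactly the $S^1$-twisting recorded by $\cL Y$ --- enters the transport, and it is this datum that replaces $\Delta\circ\pi'$ by $\delta$. In the paper the formula $\fz\simeq\delta_*\pi^*$ is read off from the augmentation of the relative cochain complex rather than from such a transport computation, which is why no such subtlety arises there.
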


The proof occupies the remainder of this section. We break up the argument
into a general discussion and then its specific application.

At the end of the section, we also explain an analogous description of traces,
conditional on a still to be developed version of Grothendieck duality
in the derived setting. Namely, assuming further that
$p:X\to Y$ is proper with invertible dualizing sheaf and
 Grothendieck duality holds,
we explain how to deduce an expected canonical equivalence
$$ \Tr(\qc(\XYX))\simeq \qc(\cL Y)$$ 
such that the trace
$\ftr:\qc(\XYX)\to \Tr(\qc(\XYX))$ is given by the
correspondence $\pi_*\delta^*:\qc(\XYX)\to \qc(\cL Y)$.



\subsubsection{Relative cyclic bar construction} 
For the proof of Theorem~\ref{thm convolution},
it will be useful to introduce relative versions of the Hochschild chain and cochain
complexes.
In general, the setup for the construction will be a map of 
associative algebra objects $B\to A$ in a monoidal $\infty$-category
$\cS$. The usual Hochschild chain and cochain
complexes introduced in the previous section will correspond
to the case when $B$ is the unit $1_\cS$. 

Consider the adjunction
 $$
\xymatrix{ \m_{B\ot A^{\rm op}}(\cS)\ar@/^1pc/[r]^-{F}&\m_{A\ot A^{\rm op}}(\cS) \ar[l]^-G
}
$$
where $F(-) =A\ot_B -$ is the induction, and $G$ is the forgetful functor from $A$-bimodules to 
$B\ot A^{\rm op}$-modules. 
Using the comonad $S\simeq FG$, 
we obtain a simplicial resolution $C^B_*(A)$ of the $A$-bimodule $A$
with terms 
$$
C^B_{n-1}(A)\simeq A \ot_B \ot \cdots \ot_B A,
\quad
\mbox{ with $n+1$ terms.}
$$
To reduce the notation, we will denote the above expression by $A_B^{\ot n+1}$.

As before, 
the geometric realization of $A\ot_{A\ot A^{\rm op}} C^B_*(A)$
calculates the trace $\Tr(A)$,
and
the totalization of $\intHom_{A\ot A^{ op}}(C^B_*(A), A)$
calculates the center $\cZ(A)$.
We write $\mathbf{N}^{B}_*(A)$
for the simplicial object $A\ot_{A\ot A^{\rm op}} C^B_*(A)$ and refer to it as the relative Hochschild
chain complex.
Similarly,
we write $\mathbf{N}_{B}^*(A)$
for the cosimplicial object $\intHom_{A\ot A^{ op}}(C^B_*(A), A)$
 and refer to it as the relative Hochschild cochain complex.

Continuing as before, we can evaluate the terms of the relative Hochschild
chain complex
$$
\mathbf{N}^{B}_n(A) = A\ot_{A\ot A^{\rm op}} C^B_n(A)
\simeq
A\ot_{A\ot A^{\rm op}} A_B^{\ot n+2} 
\simeq
B\ot_{B\ot B^{\rm op}} A_B^{\ot n+1} 
$$ 
Similarly,
we can evaluate the terms of the relative
Hochschild
cochain complex
$$
\mathbf{N}_{B}^n(A) = \intHom_{A\ot A^{ op}}(C^B_n(A), A)
\simeq
\intHom_{A\ot A^{ op}}(A_B^{\ot n+2}, A)
\simeq
\intHom_{B\ot B^{\rm op}}(A_B^{\ot n+1}, B). 
$$


\subsubsection{Proof of Theorem \ref{thm convolution}}
Now we will apply the preceding formalism to calculate the center
of the monoidal $\infty$-category $\qc(\XYX)$. Recall that our aim is
to show that there is a canonical equivalence 
$$
\cZ(\qc(\XYX)) \simeq
\qc(\cL Y).
$$
where $\cL Y$ is the loop space of $Y$.

Consider the 
symmetric monoidal $\infty$-category $\qc(X)$, together with the monoidal
functor 
$$\Delta_*:\qc(X)\to \qc(\XYX)
$$ obtained via pushforward
along the relative diagonal $\Delta:X\to \XYX$.

Set $A= \qc(\XYX)$, and $B = \qc(X)$.
By the preceding discussion, the center $\cZ(A)$
is the limit of the relative Hochschild cochain complex $\mathbf{N}^*_B(A)$.
Furthermore, its terms can be calculated
$$
\mathbf{N}_{B}^n(A) 
\simeq
\intHom_{B\ot B^{\rm op}}(A_B^{\ot n+1}, B). 
$$ 
Applying the results of Section~\ref{integral}, we can
rewrite each term in the form
$$
\mathbf{N}_{B}^n(A) \simeq \qc(\cL Y \ti_Y X \ti_Y\cdots\ti_Y X),
\quad\mbox{ with $n+1$ copies of $X$.}
$$
Here we have used the elementary identification
$$
\cL Y \ti_Y X \ti_Y\cdots\ti_Y X
 \simeq
X \times_{X \ti X}((\XYX) \ti_X \cdots \ti_X (\XYX))
$$
where the left hand side has $n+1$ copies of $X$, and the right
hand side has $n+1$ copies of $\XYX$.

We conclude that the terms of $\mathbf{N}_{B}^*(A)$ are nothing more than the
terms of the cosimplicial $\infty$-category obtained by applying 
$\qc(-)$
to the \u Cech simplicial stack induced by the map 
$$
\tilde p:\cL Y \ti_Y X\to \cL Y
$$
obtained by base change from the original map $p:X\to Y$. It
is straightforward to check that under this identification the coboundary maps
are given by the usual \u Cech pullbacks. By assumption, $p$ satisfies descent,
hence $\tilde p$ satisfies descent, and thus the totalization
$\lim \mathbf{N}_{B}^*(A)$  also calculates the $\infty$-category $\qc(\cL Y)$.

Finally, note that under this identification,
the composition $\delta_*\pi^*:\qc(\cL Y)\to \qc(\XYX)$ corresponds
to the composition 
$$
\qc(\cL Y)\simeq \lim \mathbf{N}_{B}^*(A) \to \mathbf{N}_{B}^0(A) \to \mathbf{N}_{cyc}^0(A)  
\simeq \qc(\XYX)
$$
which is precisely the central functor $\fz$.
This concludes the proof of Theorem~\ref{thm convolution}.


\subsubsection{Traces and Grothendieck duality}
Finally, we explain here an analogous description of traces,
conditional on a still to be developed version of Grothendieck duality
in the derived setting. Namely, assuming further that
$p:X\to Y$ is proper with invertible dualizing sheaf and
 Grothendieck duality holds,
we explain how to deduce an expected canonical equivalence
$$ \Tr(\qc(\XYX))\simeq \qc(\cL Y)$$ 
such that the trace
$\ftr:\qc(\XYX)\to \Tr(\qc(\XYX))$ is given by the
correspondence $\pi_*\delta^*:\qc(\XYX)\to \qc(\cL Y)$.

We continue with the notation from the proof of Theorem~\ref{thm convolution}
and the preceding sections.
We will show that the geometric realization 
$\colim \mathbf{N}^B_*(A)$ of the relative Hochschild chain complex also
calculates the $\infty$-category $\qc(\cL Y)$.
As before, applying the results of Section~\ref{integral}, we can
rewrite the terms of $ \mathbf{N}^B_*(A)$ in the form
$$
\mathbf{N}^B_n(A) \simeq \qc(\cL Y \ti_Y X \ti_Y\cdots\ti_Y X)
\quad
\mbox{ with $n+1$ copies of $X$.}
$$
Furthermore, 
it is straightforward to check that under this identification the boundary maps
of $\mathbf{N}^B_*(A)$
are given by the pushforwards $\mathfrak p_{n*}$ which are right adjoints to the usual \u Cech pullbacks
$\mathfrak p_n^*$.

To reduce notation, set $X_n =  X \ti_Y\cdots\ti_Y X$ with $n+1$ copies of $X$.

Now suppose that $p:X\to Y$ is proper and
has an invertible dualizing complex (Gorenstein). 
Then we expect Grothendieck duality to hold in the following form:
the pushforwards $\mathfrak p_{n*}$ are also
{left} adjoints to the pullbacks 
$$
\mathfrak p_n^!(-) \simeq \mathfrak p_n^*(- \otimes \omega^{-1}_{\cL Y \ti_Y X_{n-1}/Y}) \otimes \omega_{\cL Y \ti_Y X_n/Y},
$$ 
where $\omega_{\cL Y \ti_Y X_n/Y}$ denotes the relative dualizing sheaf of 
$\cL Y \ti_Y X_n \to Y$.
Under this assumption, we find that the limit
$\qc(\cL Y)$ of the 
cosimplicial $\infty$-category $(\qc(\cL Y \ti_Y X_n), \mathfrak p_n^*)$
admits the following alternative description. 

First, we can identify the cosimplicial $\infty$-category $(\qc(\cL Y \ti_Y X_n), \mathfrak p_n^*)$ 
with the 
cosimplicial $\infty$-category $(\qc(\cL Y \ti_Y X_n, \mathfrak p_n^!)$
via tensoring by the inverse of the relative dualizing sheaf $\omega_{\cL Y \ti_Y X_n/Y}$ on each simplex.
In particular, we obtain an identification of their limits.

Second, we can consider the cosimplicial $\infty$-category $(\qc(\cL Y \ti_Y X_n), \mathfrak p_n^!)$
as a diagram
in the $\infty$-category $\mc Pr^{\rm R}$ of presentable $\infty$-categories (with morphisms right adjoints).
By \cite[Theorem 5.5.3.18]{topos},
the calculation of the limit of the diagram does not depend on this choice
of context.
Then we can pass to the opposite $\infty$-category $\mc Pr^{\rm L}$  of presentable $\infty$-categories (with morphisms left adjoints). To calculate a limit in $\mc Pr^{\rm R}$
is the same as to calculate a colimit in $\mc Pr^{\rm L}$.
But we have seen that the trace $\Tr(\cH)$
is precisely the colimit of the dual simplicial diagram $(\qc(\cL Y \ti_Y X_n), \mathfrak p_{n*})$.
Thus we conclude that the geometric realization $\colim  \mathbf{N}_*^{B}(A)$
also calculates the $\infty$-category $\qc(\cL Y)$.

Finally, note that under this identification,
the composition $\pi_*\delta^*: \qc(\XYX) \to \qc(\cL Y)$ corresponds
to the composition 
$$
\qc(\XYX) \simeq  \mathbf{N}_0^{cyc}(A) \to   \mathbf{N}_0^{B}(A) \to  \colim \mathbf{N}_*^{cyc}(A)
\simeq \qc(\cL Y)
$$
which is precisely the trace~$\ftr$.



\subsection{Higher centers}\label{sect higher centers}
Unlike in classical algebra, for an
algebra object in an $\oo$-category, commutativity is an additional structure rather than a property. 
More precisely, the forgetful functor from $\cE_\infty$-algebras to $\cE_1$-algebras is 
conservative but not fully faithful. 
Given an $\cE_\infty$-algebra $A$ in a symmetric monoidal $\oo$-category $\cS$,
the center $\cZ(A)$ that we have studied to this point does not involve the commutativity of $A$.
More precisely, it only depends upon the $\cE_1$-algebra underlying $A$.

In this section, we discuss higher versions of the center where we
forget less commutativity.  For a perfect stack $X$, we calculate the
$\cE_n$-center of the $\cE_n$-category underlying the
$\cE_\oo$-category $\qc(X)$.  A detailed treatment of the foundations
of the subject may be found in Chapter 2 of \cite{thesis}. Here we
summarize only what is required for our consideration of the
$\cE_n$-center of $\qc(X)$.

\begin{remark} As noted in the introduction, after the completion of this manuscript, 
the paper \cite{dag3} was revised to include a thorough treatment of
$\oo$-categorical operads and their algebras, and the paper
\cite{dag6} treats in great detail the specific case of the
$\cE_n$-operads. We refer the reader to these preprints for further
details.
\end{remark}

Let $F$ be a topological operad. We define a topological category
$\cF$ with objects finite pointed sets and morphism spaces given by
$$
\Map_\cF(J_*, I_*) = \coprod_{f:J_* \ra I_*} \prod_I F(f^{-1}\{i\}).
$$
We then obtain an $\oo$-category, also denoted by $\cF$,
 by applying the singular functor to the mapping spaces to get a simplicial category and then taking the simplicial nerve.
 
\begin{definition}[\cite{thesis}] An $\cF$-monoidal structure on an $\oo$-category $\cC$ consists of a functor $p: \cF \ra {\rm Cat}_{\oo}$ with an identification $\cC \simeq p(1_*)$ such that the natural maps 
$p(J_*) \ra \prod_J p(1_*)$ resulting from a choice of a point $x\in F(J)$ is an equivalence.
\end{definition}

We will refer to the $\infty$-category $\cC$ equipped with an
$\cF$-monoidal structure as an $\cF$-category. This construction will
be of particular interest to us when $\cF$ is the little $n$-disks
operad $\cE_n$.

To discuss Hochschild cohomology, we must next introduce the notion of operadic modules. 

To this end, let ${{\rm Fin}_*}$ be the category of finite based sets. 
Let ${\rm Fin}_+$ be the category of ``doubly-based sets" 
with objects finite based sets $I_*$ and $(I\amalg +)_*$, 
and morphisms $\alpha$ of the underlying based sets such that
$+\in \alpha^{-1}(+)$, and either $\alpha(+) = +$ or $\alpha(+) = *$. 

Given a topological operad $F$, recall the $\oo$-category $\cF$ introduced above.
We also define the  $\oo$-category $\cF_+$ to be the fiber product of $\oo$-categories 
$$
\cF_+ = \cF\times_{{\rm Fin}_*} {\rm Fin}_+.
$$

\begin{definition} Let $\cC$ be an $\cF$-category. An $\cF$-$\cC$-module structure on an $\oo$-category $\cM$ is a functor $q: \cF_+ \ra {\rm Cat}_{\oo}$ extending the $\cF$-monoidal structure on 
$\cC$, together with an identification $q(+) \simeq \cM$ such that the natural map $q((I\amalg +)_*) \ra \cC^I \times \cM$ is an equivalence for any $I$.
\end{definition}

\begin{example} When $\cF$ is the $\cE_1$ operad, the notion of an $\cF$-$\cC$-module coincides
with that of a $\cC$-bimodule: it is an $\oo$-category left and right tensored over $\cC$. When $\cF$ is the $\cE_\infty$ operad, the notion coincides with that of a left (or equivalently right) module.
\end{example}

There is a similar definition of an $\cF$-algebra and an $\cF$-$A$-module in a symmetric monoidal or $\cF$-monoidal $\oo$-category $\cM$ generalizing the definition given above. This requires the notion of an $\cF$-lax monoidal functor (see \cite[Chapter 2, Definition 3.10]{thesis}). The simplification of the previous definition is available because ${\rm Cat}_\infty$ is equipped with the Cartesian monoidal structure. We will avail ourselves of this greater generality in the following definition, although the only case that will concern us in the following is when $\cM$ is 
either ${\rm Cat}_\infty$ or $\mc Pr^{\rm L}$.

Now let $\cM$ be a presentable symmetric monoidal
$\oo$-category whose monoidal structure distributes over colimits. 
Let $A$ be an $\cF$-algebra in $\cM$, and let $\m_A^\cF(\cM)$ be the $\oo$-category of $\cF$-$A$-modules in $\cM$ (see  \cite[Chapter 2, Definition 4.3]{thesis}). Under the above assumptions, 
the $\infty$-category $\m_A^\cF(\cM)$ is naturally tensored over $\cM$.

\begin{definition} 
For an $\cF$-algebra $A$ in $\cM$, we define the $\cF$-Hochschild cohomology 
$$
\hh^*_\cF(A) =\intHom_{\m_A^\cF}(A,A)
$$ 
to be the object of $\cM$ representing the endomorphisms of $A$ as an object of $\m_A^\cF(\cM)$.
\end{definition}

In particular, when $\cM = \mc Pr^{\rm L}$, 
the $\cF$-Hochschild cohomology of an $\cF$-category $\cC$ is the $\oo$-category of $\cF$-$\cC$-module functors 
$$
\hh^*_\cF(\cC) = \Fun_{\m_\cC^\cF}(\cC, \cC).
$$
%

We now specialize to the case where $\cF$ is the $\cE_n$-operad. Intuitively, an $\cE_n$-algebra structure on an object $A$ is equivalent to a family of associative algebra structures on $A$
parameterized by $S^{n-1}$  such that antipodal points are associated to opposite multiplications on 
$A$. An $\cE_n$-$A$-module structure on $M$ admits a similar intuitive interpretation
as a family of compatible left $A$-module structures on $M$
parameterized by $S^{n-1}$. This intuition leads to the following (to appear in \cite{inprep}, see also \cite{dag6}). 

\begin{prop}[\cite{inprep}]
Let $\cC$ be a presentable symmetric monoidal $\oo$-category 
whose monoidal structure distributes over colimits.

To an $\cF$-algebra $A$ in $\cC$ there is functorially assigned associative algebra $U_A$ such that there is a canonical equivalence $\m_A^\cF(\cC) \simeq \m_{U_A}(\cC)$ between $\cF$-$A$-modules and left $U_A$-modules. 

If $\cF$ is the $\cE_n$ operad, and the $\cE_n$-algebra structure on $A$ is obtained by restriction from an $\cE_\infty$-algebra structure, then there is a canonical equivalence of 
associative algebras $U_A \simeq S^{n-1}\ot A$.
\end{prop}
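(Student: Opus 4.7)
The plan is to handle the two assertions separately, using monadicity for the first and factorization homology for the second.

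For the first assertion, I would apply the $\infty$-categorical Barr-Beck monadicity theorem to the forgetful functor $G: \m_A^\cF(\cC) \to \cC$. Since $\cC$ is presentable with monoidal structure distributing over colimits, the $\infty$-category $\m_A^\cF(\cC)$ is itself presentable, and $G$ admits a left adjoint $F_\cF$ (the free $\cF$-$A$-module functor) and preserves colimits (both facts follow from the construction of $\m_A^\cF$ as appropriate sections of a cocartesian fibration over $\cF_+$). The functor $G$ is also conservative, since an $\cF$-$A$-module whose underlying object in $\cC$ is zero is itself zero. Hence by \cite[Theorem 3.4.5]{dag2} the induced comparison functor gives an equivalence $\m_A^\cF(\cC) \simeq \m_T(\cC)$ for the monad $T = G F_\cF$ on $\cC$. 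Because $T$ preserves colimits and $\cC$'s monoidal product distributes over colimits, $T$ is determined by its value on the unit and takes the form $T(X) \simeq U_A \otimes X$ with $U_A := T(1_\cC)$; the associative algebra structure on $U_A$ is inherited from the monad structure on $T$. Functoriality in $A$ is immediate from the naturality of this construction.

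For the second assertion, the plan is to interpret $U_A$ via factorization homology. For any $\cE_n$-algebra $B$ in $\cC$, one shows that the enveloping algebra $U_B$ is computed by the factorization homology of the punctured $n$-disk with coefficients in $B$; equivalently, $U_B$ arises as the colimit of the ``Swiss cheese'' operations $\cE_n^+(k,1) \otimes_{\Sigma_k} B^{\otimes k}$, assembled by the operadic composition. Its associative algebra structure comes from the stacking embedding $(\RR^n\setminus 0) \sqcup (\RR^n \setminus 0) \hookrightarrow \RR^n\setminus 0$ along an axis. When $B=A$ is obtained by restriction from an $\cE_\infty$-structure, factorization homology simplifies: for any space $M$, there is a natural equivalence of $\cE_\infty$-algebras $\int_M A \simeq M \otimes A$, where the right hand side is the tensor of $\cE_\infty$-algebras over spaces. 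This identification follows from the observation that both sides corepresent the functor sending an $\cE_\infty$-algebra $R$ under $A$ to the space of maps $M \to \Map_{A/}(A, R)$. Applied to $\RR^n \setminus 0 \simeq S^{n-1}$, while keeping track of the $\cE_1$-structure coming from stacking, this yields an equivalence $U_A \simeq S^{n-1} \otimes A$ of associative algebras.

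The main obstacle will be the first input to the second step, namely the identification of $U_B$ with factorization homology over the punctured $n$-disk. This requires either a direct operadic manipulation of the ``Swiss cheese'' construction or a general comparison between operadic modules and factorization algebras for $\cE_n$-algebras (see \cite{dag6} for a full treatment). Once this identification is in place, the $\cE_\infty$-reduction and the final equivalence with $S^{n-1}\otimes A$ are essentially formal consequences of the universal property of the tensor of commutative algebras with spaces.
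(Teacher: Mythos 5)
There is nothing in the paper to compare against here: the proposition is quoted from \cite{inprep} (see also \cite{dag6}) and no proof is given in this paper, so your proposal has to stand on its own. In outline it follows the standard route of those references (monadicity to produce $U_A$, then a factorization-homology computation in the commutative case), which is the right approach, but two steps need repair or honest attribution.

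In the first step, the inference ``$T=GF_\cF$ preserves colimits, hence is determined by its value on the unit and has the form $U_A\ot(-)$'' is not valid as stated. A colimit-preserving endofunctor of $\cC$ need not be of this form: for an ordinary field $k$ with a nontrivial automorphism $\sigma$, extension of scalars along $\sigma$ is a colimit-preserving endofunctor of $\Mod_k$ whose value on the unit is $k$, yet it is not equivalent to the identity (the corresponding bimodules $k$ and $k_\sigma$ differ). What you actually need is that the free--forgetful adjunction for operadic modules is $\cC$-linear (a projection-formula statement), so that $T$ is a morphism of $\cC$-module categories; then $\Fun^{\rm L}_\cC(\cC,\cC)\simeq\cC$ (monoidally) converts the $\cC$-linear monad $T$ into an associative algebra $U_A$. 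This is exactly the care the paper itself takes in the proof of Proposition \ref{dualizable prop}, where colimit-preservation and $\cC$-linearity of $G\ot\Id$ are invoked as separate inputs. Second, in the $\cE_n$ case the identification of $U_A$ with factorization homology over the punctured $n$-disk (equivalently $S^{n-1}\ti\RR$), together with its associative structure from stacking, is not a preliminary reduction but the actual substance of the second assertion; you defer it to \cite{dag6} or to an unspecified ``direct operadic manipulation.'' Granting that input, your reduction for commutative $A$ via $\int_M A\simeq M\ot A$ is fine (modulo checking that the stacking multiplication agrees with the underlying associative structure of the $\cE_\infty$-algebra $S^{n-1}\ot A$), but as written the heart of the second claim remains a citation rather than an argument --- which is no worse than what the paper does, but should be flagged as such rather than presented as an obstacle you expect to clear formally.
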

%

For $n=1$, the proposition 
reduces to the familiar statement that for an $\cE_\infty$-algebra $A$,
an $A\ot A^{\rm op}$-module structure (in the form of an $\cE_1$-$A$-module structure)
is equivalent to an $A\ot A$-module structure (in the form of a left $U_A$-module structure). 

For our current purposes, one can interpret the proposition as furnishing the definition of an 
$\cE_n$-$A$-module.
Namely, the reader uncomfortable with the abstractions can take
left $S^{n-1}\ot A$-modules as the definition of $\cE_n$-$A$-modules

\begin{example} Let $\cC$ be an $\cE_1$-algebra in $\mc Pr^{\rm L}$
(so $\cC$ is a presentable monoidal $\oo$-category whose monoidal structure distributes over colimits). Then left modules for the monoidal $\oo$-category $U_\cC$ are equivalent to $\cC$-bimodules. 
In particular, we have an equivalence $U_\cC \simeq \cC \ot \cC^{\rm op}$, where here $\cC^{\rm op}$ denotes the $\oo$-category $\cC$ equipped with the opposite monoidal structure. As a consequence, we see that in this case,
the preceding general definition of $\cF$-Hochschild cohomology recaptures the notion of the Drinfeld center introduced earlier.

\end{example}

Let $A$ be an $\cE_\infty$-algebra and $U_A$ be as above. We have a companion definition of $\cE_n$-Hochschild homology.

\begin{definition} 
For an $\cE_\oo$-algebra $A$,
we define the $\cE_n$-Hochschild homology 
$$
\hh_*^{\cE_n}(A) = A\ot_{U_A} A
$$ to be the tensor product
of $A$ with itself over the algebra $U_A$.
\end{definition}

We now have the following contribution of this paper to the story.

\begin{cor}
 \label{cor higher center}
For a perfect stack $X$, consider the stable $\oo$-category 
$\qc(X)$ equipped with its $\cE_n$-tensor product. Then
with $X^{S^n}=\Map(S^n, X)$,
there are canonical equivalences
$$
\qc(X^{S^n})\simeq \hh^*_{\cE_n}(\qc(X)) \simeq
\hh_*^{\cE_n}(\qc(X))
$$
\end{cor}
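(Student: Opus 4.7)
The plan is to reduce both $\cE_n$-Hochschild homology and cohomology of $\qc(X)$ to $\qc$ of a single derived fiber product, and then to recognize that fiber product as $X^{S^n}$.

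First, I would unwind the abstract algebra: by the proposition cited just above the corollary, an $\cE_n$-module over the $\cE_\infty$-algebra $\qc(X)$ is equivalently a left $U_{\qc(X)}$-module, where $U_{\qc(X)}\simeq S^{n-1}\otimes \qc(X)$ as an associative algebra in $\mc Pr^{\rm L}$. By Corollary \ref{cor qc of finite mapping stacks}, the tensor $S^{n-1}\otimes \qc(X)$ is canonically equivalent to $\qc(X^{S^{n-1}})$ as a symmetric monoidal $\infty$-category. Under this equivalence, the $U_{\qc(X)}$-module structure on $\qc(X)$ corresponds to pullback along a pair of maps $X\rightrightarrows X^{S^{n-1}}$, each given by the constant-map inclusion induced by $S^{n-1}\to \mathrm{pt}$.

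Next, I would apply the main technical results of Section \ref{integral}. The stack $X^{S^{n-1}}$ is perfect by Corollary \ref{cor finite mapping stacks have air}, and the inclusion $X \to X^{S^{n-1}}$ is a morphism between perfect stacks, hence itself perfect by Corollary \ref{airated}. Theorem \ref{Thomason tensor} then gives
$$\qc(X)\otimes_{\qc(X^{S^{n-1}})}\qc(X)\simeq \qc(X\times_{X^{S^{n-1}}}X),$$
identifying $\hh_*^{\cE_n}(\qc(X))$ with the right-hand side. Similarly, Corollary \ref{transforms with air} (or Theorem \ref{main theorem}) yields
$$\Fun_{\qc(X^{S^{n-1}})}(\qc(X),\qc(X))\simeq \qc(X\times_{X^{S^{n-1}}}X),$$
identifying $\hh^*_{\cE_n}(\qc(X))$ with the same $\infty$-category, and thereby with $\hh_*^{\cE_n}(\qc(X))$.

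Finally, I would identify the fiber product geometrically. The $n$-sphere $S^n$ is the unreduced suspension of $S^{n-1}$, i.e., the pushout $\mathrm{pt}\sqcup_{S^{n-1}}\mathrm{pt}$. Since $\Map(-,X)$ sends colimits of simplicial sets to limits of derived stacks, this yields
$$X^{S^n}\simeq X\times_{X^{S^{n-1}}}X,$$
with structure maps matching the two constant-map inclusions used above. Combining the three steps produces the desired equivalences $\qc(X^{S^n})\simeq \hh^*_{\cE_n}(\qc(X))\simeq \hh_*^{\cE_n}(\qc(X))$. The main obstacle is the first step: the identification $U_{\qc(X)}\simeq S^{n-1}\otimes \qc(X)$ together with the matching of module structures rests on operadic foundations developed outside this paper, specifically the tensoring of $\cE_\infty$-algebras over spaces and the enveloping-algebra description of $\cE_n$-modules. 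Once that input is taken as a black box, the remainder of the argument is a direct combination of Theorem \ref{Thomason tensor}, Corollary \ref{transforms with air}, and the elementary pushout presentation of $S^n$.
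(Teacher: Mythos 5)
Your proposal is correct and follows essentially the same route as the paper: the paper's (much terser) proof likewise invokes the enveloping-algebra proposition, the identification $\qc(X^{S^{n-1}})\simeq S^{n-1}\otimes\qc(X)$ from Corollary \ref{cor qc of finite mapping stacks}, and Theorem \ref{Thomason tensor} (with the functor/self-duality statement of Corollary \ref{transforms with air}) applied to the Cartesian square exhibiting $X^{S^n}\simeq X\times_{X^{S^{n-1}}}X$ along the two constant-map inclusions. You have merely spelled out the steps the paper compresses into ``an inductive application,'' including the same reliance on the operadic input from outside the paper.
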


\begin{proof}
The result follows from an inductive application of Theorem \ref{Thomason
tensor} and Corollary \ref{cor qc of finite mapping stacks} to the
Cartesian diagrams $$ \xymatrix{
X^{S^n} \ar[r]\ar[d]&X\ar[d]\\
X\ar[r]& X^{S^{n-1}}\\}
$$
where the two maps $X \to X^{S^{n-1}}$ assign to a point of
$X$ the corresponding constant map.
\end{proof}


\section{Epilogue: Topological Field Theory}\label{TFT}

For a perfect stack $X$, the $\infty$-category $\qc(\cL X)$ of sheaves on
its loop space carries a rich structure coming from topological
field theory, generalizing the braided tensor category structure on
the usual Drinfeld center and providing a categorified analogue of
the Deligne conjecture on Hochschild cochains. In this section, we
discuss these structures and their generalizations.

\subsection{Topological Field Theory from perfect stacks}
Let $2{\rm Cob}$ denote the $\infty$-category whose objects are
compact oriented $1$-manifolds, and whose morphisms $2{\rm
Cob}(C_1,C_2)$ consist of the classifying spaces of oriented
topological surfaces with fixed incoming and outgoing components
$C_1,C_2$. This $\infty$-category has a symmetric monoidal structure given by
disjoint union of 1-manifolds. We will abuse notation by denoting
the object consisting of the disjoint union of $m$ copies of $S^1$
by $m$.

\begin{definition} A 2-dimensional topological field theory
valued in a symmetric monoidal $\infty$-category $\cC$ is a
symmetric monoidal
functor $F:2{\rm Cob}\to\cC$.
\end{definition}

Fix a base commutative derived ring $k$.

\begin{definition} Let ${\rm Cat}_\infty ^{\rm ex}$ 
denote the $(\infty, 2)$-category of stable, presentable $k$-linear
$\infty$-categories, with 1-morphisms consisting of continuous exact
functors.
\end{definition}

Note that ${\rm Cat}_\infty ^{\rm ex}$ has a symmetric monoidal
structure, the tensor product of presentable stable categories as studied in
\cite[4.2]{dag2}, and the unit of
the monoidal structure is the $\infty$-category of $k$-modules. Since the
empty $1$-manifold is the unit of $2{\rm Cob}$, a topological field
theory $\cZ$ sends its endomorphisms, which are closed surfaces
$\Sigma$, to endomorphisms of the unit of the target $\infty$-category $\cC$.
When $\cC={\rm Cat}_\infty ^{\rm ex}$, the unit is $\Mod_k$ and
$\cZ(\Sigma)$ is a $k$-module.

\begin{prop} For a perfect stack $X$, there is a
2d TFT $\cZ_X:2{\rm Cob}\to{\rm Cat}_\infty ^{\rm ex}$ with
$\cZ_X(S^1)=\qc(\cL X)$ and $\cZ_X(\Sigma)=\Gamma(X^\Sigma, \cO_{X^\Sigma})$,
for closed surfaces $\Sigma$.
\end{prop}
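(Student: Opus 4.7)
The plan is to construct $\cZ_X$ as a sigma-model using the mapping stack functor $\Sigma \mapsto X^\Sigma$ composed with pullback-pushforward along the resulting correspondences. The mapping stack construction is symmetric monoidal in the relevant sense: it sends disjoint unions of $1$-manifolds to products of stacks and, crucially, sends pushouts of $1$-manifolds (gluing of cobordisms along common boundary) to derived fiber products of stacks. On objects of $2\mathrm{Cob}$, we obtain $(S^1)^{\coprod m} \mapsto (\cL X)^m$, and by iterated application of Theorem \ref{Thomason tensor} there is a canonical equivalence $\qc((\cL X)^m) \simeq \qc(\cL X)^{\otimes m}$ in ${\rm Cat}_\infty^{\rm ex}$. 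Thus the assignment $m \mapsto \qc(\cL X)^{\otimes m}$ (with empty $1$-manifold sent to $\Mod_k$) is evidently symmetric monoidal on objects and yields the claimed value $\cZ_X(S^1) = \qc(\cL X)$.

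For $1$-morphisms, a cobordism $\Sigma : (S^1)^{\coprod m} \to (S^1)^{\coprod n}$ (equipped with a finite triangulation, so that Corollary \ref{cor finite mapping stacks have air} applies) gives a correspondence of perfect stacks
\[
(\cL X)^m \xleftarrow{p_{\mathrm{in}}} X^\Sigma \xrightarrow{p_{\mathrm{out}}} (\cL X)^n
\]
via restriction along the boundary inclusions. I would define $\cZ_X(\Sigma) := (p_{\mathrm{out}})_* \circ (p_{\mathrm{in}})^*$; this is a continuous exact functor since $p_{\mathrm{out}}$ is a morphism of perfect stacks and hence perfect by Corollary \ref{airated}, so its pushforward preserves all colimits by Proposition \ref{Thomason pushforwards}. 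When $\Sigma$ is a closed surface, viewed as an endomorphism of $\emptyset$, this specializes to the functor $\Mod_k \to \Mod_k$ that sends $k$ to $\Gamma(X^\Sigma, \cO_{X^\Sigma})$, which gives the second claim of the proposition.

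To check functoriality, suppose $\Sigma = \Sigma_1 \cup_C \Sigma_2$ is obtained by gluing along a common boundary $C$. Then the mapping stack decomposes as a derived fiber product $X^\Sigma \simeq X^{\Sigma_1} \times_{X^C} X^{\Sigma_2}$. The base change formula of Proposition \ref{Thomason pushforwards} (applicable because the relevant projections are perfect morphisms) identifies the composition $\cZ_X(\Sigma_2) \circ \cZ_X(\Sigma_1)$ with $\cZ_X(\Sigma)$. Similarly, disjoint union of cobordisms produces the external product of the corresponding integral kernels, yielding compatibility with the symmetric monoidal structure. In particular the identification of $\cZ_X(\Sigma)$ for closed $\Sigma$ is independent of how one decomposes $\Sigma$ into elementary cobordisms.

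The main obstacle is promoting these pointwise assignments to an honest symmetric monoidal functor of symmetric monoidal $(\infty,1)$-categories, rather than a mere compatibility of values modulo homotopy. The cleanest approach is to factor the construction through an intermediate symmetric monoidal $(\infty,1)$-category of correspondences of perfect stacks: first build a symmetric monoidal functor $2\mathrm{Cob} \to \mathrm{Corr}$ from $\Sigma \mapsto X^\Sigma$ (using that mapping stacks convert gluings to derived fiber products, so composition in $\mathrm{Corr}$ matches concatenation of cobordisms), and then postcompose with the symmetric monoidal functor $\qc$ from $\mathrm{Corr}$ to ${\rm Cat}_\infty^{\rm ex}$, whose composition law on morphisms is encoded precisely by the base change and projection formulas established in Section \ref{base change}. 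Organizing these correspondences coherently into such an $(\infty,1)$-functor is the technically delicate step; the remaining identifications of values on $S^1$ and on closed surfaces follow from $\cL X \simeq X^{S^1}$ and the identification of derived global sections with pushforward to a point.
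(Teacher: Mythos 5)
Your proposal is correct and follows essentially the same route as the paper: define $\cZ_X(m)=\qc((\cL X)^m)$, send a cobordism $\Sigma$ to pull--push along $(\cL X)^m \leftarrow X^\Sigma \rightarrow (\cL X)^n$ using Corollary~\ref{cor finite mapping stacks have air} and Proposition~\ref{Thomason pushforwards}, verify composition via the fiber-product decomposition of mapping stacks and base change, and obtain the symmetric monoidal structure from Theorem~\ref{Thomason tensor}. Your suggested factorization through a symmetric monoidal $(\infty,1)$-category of correspondences is just a cleaner packaging of the coherence data that the paper handles informally (``applying this construction in families'' and ``similar diagrams define the higher compositions''), so it is not a substantively different argument.
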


\begin{proof}
We define $\cZ_X$ on objects by assigning $\cZ_X(m)=\qc((\cL X)^{m})$.

To define $\cZ_X$ on morphisms, observe first that
 since $X$ is perfect, $(\cL X)^{m}$ is perfect,
and for $\Sigma\in 2{\rm Cob}^\circ(m,n)$,
the mapping stack $X^\Sigma=\Map(\Sigma, X)$ is also perfect
(special cases of Corollary~\ref{cor finite mapping stacks have air}).
Now for each $\Sigma\in 2{\rm Cob}^\circ(m,n)$, consider the correspondence
$$\xymatrix{
& \ar[dl] \ar[dr] X^\Sigma & \\
(\cL X)^{m}& & (\cL X)^{n}\\
}
$$
Since all of the stacks involved are perfect,
pullback and pushforward of quasi-coherent sheaves along this
correspondence defines a colimit-preserving functor
$$\cZ_X(\Sigma):\qc((\cL X)^{m})\to \qc((\cL X)^{n}).
$$
Thus applying this construction in families, we
obtain a map of spaces
$$\cZ_X(m,n):2{\rm Cob}^\circ(m,n)\to
\Fun(\qc((\cL X)^{ m}),\qc((\cL X)^{n})).$$

Suppose now that
$\Sigma=\Sigma_1 \coprod_{\coprod_k {S^1}} \Sigma_2 \in 2{\rm
Cob}(m,n)$ is obtained by sewing
two surfaces $\Sigma_1\in 2{\rm Cob}(m,k)$ and $\Sigma_2\in 2{\rm
Cob}(k,n)$. Then we have a diagram of correspondences
$$\xymatrix{
&&X^\Sigma \ar[dl]\ar[dr]&&\\
& \ar[dl] \ar[dr] X^{\Sigma_1}&&X^{\Sigma_2} \ar[dl] \ar[dr]\\
(\cL X)^{m}& & (\cL X)^{k} & &(\cL X)^{n}
}
$$
Since all of the stacks involved are perfect,
base change provides a canonical equivalence of functors
$$\cZ_X(\Sigma)\simeq \cZ_X(\Sigma_2)\circ
\cZ_X(\Sigma_1): \qc((\cL X)^{m})\to \qc((\cL X)^{n}).$$
Similar diagrams define the higher compositions.

To complete the construction,
note that $\cZ_X$ comes equipped with a canonical symmetric monoidal structure.
Namely, by Theorem~\ref{Thomason tensor}, there is a canonical equivalence
$$
\cZ_X(m) = \qc((\cL X)^{m})
\simeq  \qc( \cL X)^{\ot m} = \cZ_X(1)^{\ot m},
$$
and it clearly extends to a
symmetric monoidal structure.
\end{proof}

As an example, take $X=BG$
(in characteristic zero).
Then $\cZ_{BG}(S^1)$ is the $\infty$-category of sheaves on
the adjoint quotient $G/G={\rm Loc}_G(S^1)$, while
$\cZ_{BG}(\Sigma)$ is the cohomology of the structure sheaf of the
moduli stack $BG^{\Sigma}={\rm Loc}_G(\Sigma)$ of $G$-local systems
on $\Sigma$.

\medskip

More generally, the particular topological field theory operations
we are considering are not sensitive to the structure of manifolds,
and the same construction provides a field theory living over
``bordisms" of spaces. In what follows, by a space we will mean a space
homotopy equivalent to a finite simplicial set.

\begin{definition}
Let ${\rm Bord}_{\rm Spaces}$ denote the bordism $(\infty,1)$-category
of spaces, with 1-morphisms from $U$ to $V$ given by spaces $T_{UV}$
with maps $U\rightarrow T_{UV} \leftarrow V$, and composition of
$T:U\to V$ and $T':V\to W$ given by the homotopy pushout of spaces
$$T'\circ T= T \amalg_V T'.$$ We endow ${\rm Bord}_{\rm Spaces}$ with a symmetric monoidal
structure given by disjoint union.
\end{definition}

The proof of the previous proposition immediately extends to give the following:

\begin{prop} For any perfect stack $X$, there is a symmetric
monoidal functor $$S_X:{\rm Bord}_{\rm Spaces} \to {\rm Cat}_\infty
^{\rm ex}$$ with $S_X(U)=\qc(X^U)$ and $S_X(T:U\to V)$ the functor
$\qc(U)\to \qc(V)$ given by pullback and pushforward along the
correspondence $$X^U\leftarrow X^T \rightarrow
X^V.$$
\end{prop}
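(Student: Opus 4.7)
The plan is to follow exactly the blueprint of the preceding proposition about $2\mathrm{Cob}$, with closed surfaces replaced by arbitrary finite spaces and the sewing axiom $\Sigma = \Sigma_1 \amalg_{\coprod S^1} \Sigma_2$ replaced by the defining homotopy pushout $T' \circ T = T \amalg_V T'$. The first step is to define $S_X$ on objects: for a space $U$ (homotopy equivalent to a finite simplicial set) we set $S_X(U) = \qc(X^U)$, and by Corollary~\ref{cor finite mapping stacks have air} the mapping stack $X^U$ is again a perfect stack, so $\qc(X^U)$ is an object of $\mathrm{Cat}_\infty^{\mathrm{ex}}$. Next, to a $1$-morphism $U \to T \leftarrow V$ I assign the pull--push functor associated to the correspondence of perfect stacks $X^U \leftarrow X^T \to X^V$; the pullback is exact and colimit-preserving, and since both $X^T \to X^U$ and $X^T \to X^V$ are morphisms between perfect stacks they are perfect by Corollary~\ref{airated}, so the pushforward is colimit-preserving by Proposition~\ref{Thomason pushforwards}. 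Hence $S_X(T)$ is a morphism in $\mathrm{Cat}_\infty^{\mathrm{ex}}$.

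The composition law is the essential geometric point. Given composable bordisms $T : U \to V$ and $T' : V \to W$, the composite is the homotopy pushout $T \amalg_V T'$. Since $\Map(-, X)$ is a right adjoint on the $\infty$-category of derived stacks (being the internal hom out of the constant stack), it converts homotopy colimits of spaces into homotopy limits of stacks, giving a canonical equivalence
$$X^{T \amalg_V T'} \simeq X^T \times_{X^V} X^{T'}.$$
Now base change and the projection formula along perfect morphisms (Proposition~\ref{Thomason pushforwards}) applied to the resulting diagram
$$\xymatrix{ & & X^{T\amalg_V T'} \ar[dl]\ar[dr] & & \\ & X^T \ar[dl]\ar[dr] & & X^{T'} \ar[dl]\ar[dr] & \\ X^U & & X^V & & X^W}$$
yield the desired canonical equivalence $S_X(T' \circ T) \simeq S_X(T') \circ S_X(T)$, exactly as in the $2\mathrm{Cob}$ argument.

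Finally, the symmetric monoidal structure is supplied by Theorem~\ref{Thomason tensor} together with Corollary~\ref{cor qc of finite mapping stacks}: disjoint union of spaces corresponds to product of mapping stacks, $X^{U_1 \amalg U_2} \simeq X^{U_1} \times X^{U_2}$, and then
$$\qc(X^{U_1 \amalg U_2}) \simeq \qc(X^{U_1}) \otimes \qc(X^{U_2}),$$
naturally in $U_1, U_2$. The unit corresponds to the empty space, with $X^\emptyset \simeq \Spec k$ and $\qc(\Spec k) \simeq \Mod_k$ the monoidal unit of $\mathrm{Cat}_\infty^{\mathrm{ex}}$.

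The only subtle point -- and the main obstacle to turning the above into a formal argument -- is promoting these correspondence-theoretic constructions to a coherent symmetric monoidal functor of $(\infty,1)$-categories, rather than just a map on homotopy categories. Concretely, one must package the pull--push construction into a single functor out of the bordism $\infty$-category, which requires knowing that correspondences of perfect stacks form an $\infty$-category with a symmetric monoidal structure for which pull--push is functorial, and that the assignment $U \mapsto X^U$ together with the correspondences $X^U \leftarrow X^T \to X^V$ assembles into a symmetric monoidal functor ${\rm Bord}_{\rm Spaces} \to \mathrm{Corr}(\mathrm{PerfSt})$. Once this formal framework is in place, composing with the pull--push functor to $\mathrm{Cat}_\infty^{\mathrm{ex}}$ (whose well-definedness is precisely what base change and the projection formula guarantee) produces $S_X$.
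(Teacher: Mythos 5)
Your proposal is correct and follows essentially the same route as the paper, which simply observes that the proof of the preceding proposition for $2{\rm Cob}$ extends verbatim: perfectness of the mapping stacks (Corollary~\ref{cor finite mapping stacks have air}), pull--push along correspondences of perfect stacks with base change for composition, and Theorem~\ref{Thomason tensor} for the symmetric monoidal structure. Your closing caveat about assembling the construction into a coherent symmetric monoidal $\infty$-functor is a fair point, but the paper treats this at the same level of informality (defining the maps "in families" and asserting that "similar diagrams define the higher compositions"), so it is not a gap relative to the paper's own argument.
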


The 2d topological field theory $\cZ_X$ above is a categorified
analogue of the 2d TFTs defined by string topology on a compact
oriented manifold, or the topological B-model defined by a
Calabi-Yau variety \cite{costello}. Namely, we assign to the circle
the $\infty$-category of quasi-coherent sheaves, rather than the complex of
chains, on the loop space. This has the advantage that we may
construct maps for arbitrary correspondences without the assumption
that $X$ is a Calabi-Yau or satisfies Poincar\'e duality. On the
other hand, we have gone up a level of categoricity, pushing off the
problems of duality and orientation to the definition of operations between
$\Gamma(X^\Sigma, \cO_{X^\Sigma})$ for cobordisms between surfaces or invariants
for 3-manifolds. Recall \cite{BakalovKirillov} that an extended
3-dimensional topological field theory assigns a braided (in fact
ribbon) category to the circle. This category must however satisfy a
strong finiteness and nondegeneracy condition (modularity) coming
from its extension to three-manifolds. It would be interesting to
see what conditions need to be imposed on the sheaves we consider
and on the stack $X$ in order to extend $\cZ_X$ in such a fashion.
Note that Corollary \ref{prop dualizable} shows that $\qc(X)$ for $X$ perfect 
satisfies an analogue of the Calabi-Yau or Frobenius
property: namely it is self-dual as a $\Mod_k$-module.


\subsection{Deligne-Kontsevich conjectures for derived centers}
The notion of Drinfeld center for monoidal stable categories is a
categorical analogue of Hochschild cohomology of associative (or
$A_\infty$) algebras (or more precisely of Hochschild cochains, or
its spectral analogue, topological Hochschild cohomology). In the
case of algebras, Deligne's conjecture states that the Hochschild
cochain complex has the structure of an $\cE_2$-algebra (lifting the
Gerstenhaber algebra, or $H_*(\cE_2)$-algebra, structure on
Hochschild cohomology).  There is also a cyclic version of the
conjecture which states that the Hochschild cochains for a {\em
Frobenius} algebra possesses the further structure of a framed
$\cE_2$, or ribbon, algebra. See
\cite{T1,K,KS,McClureSmith,costello, KS Ainfty} for various proofs
of the Deligne conjecture and \cite{Kaufmann, TZ} for its cyclic
version. The Kontsevich conjecture (see \cite{T2, HKV}) generalizes
this picture to higher algebras, asserting that the Hochschild cochains
on an $\cE_n$-algebra have a natural $\cE_{n+1}$-structure.

The work of Costello \cite{costello} and Kontsevich-Soibelman
\cite{KS Ainfty} explains how a strong form of the Deligne
conjecture follows from the structure of topological field theory
-- specifically, by constructing a topological field theory $\cZ_A$ from
an algebra $A$ such that $\cZ_A(S^1)$ is the Hochschild cochain complex of
$A$.
Observe that the configuration space of $m$-tuples of circles
in the disk (the $m$-th space of the $\cE_2$-operad) maps to the
space of functors $\cZ(S^1)^{\ot m}\to \cZ(S^1)$ compatibly with

compositions.

\begin{prop}
In any 2d TFT $\cZ$, the object $\cZ(S^1)\in \cC$ is an algebra over
the framed $\cE_2$-operad (in particular, over the $\cE_2$-operad).
\end{prop}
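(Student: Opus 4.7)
The plan is to realize the framed $\cE_2$-operad as a suboperad (up to homotopy) of the endomorphism operad of $S^1$ inside $2\text{Cob}$, and then push it forward along the symmetric monoidal functor $\cZ$. Concretely, a point in the $m$-th space of the framed little 2-disks operad is a configuration of $m$ disjoint framed (i.e., oriented and parameterized up to rotation) disks $D_1, \dots, D_m$ inside a standard disk $D$. Removing the interiors of the $D_i$ from $D$ produces an oriented surface $\Sigma$ with $m+1$ boundary components, where the chosen framings give canonical identifications $\partial D_i \simeq S^1$ (incoming) and $\partial D \simeq S^1$ (outgoing). This surface is a $1$-morphism $(S^1)^{\coprod m}\to S^1$ in $2\text{Cob}$, and the assignment is manifestly continuous in the configuration.

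The first step is therefore to construct, in families over the operad space $f\cE_2(m)$, the corresponding map to the mapping space $2\text{Cob}((S^1)^{\coprod m},S^1)$. Applying $\cZ$, which by definition takes values in mapping spaces of $\cC$, and using the symmetric monoidal identification $\cZ((S^1)^{\coprod m})\simeq \cZ(S^1)^{\otimes m}$, one obtains a canonical map
\[
f\cE_2(m)\longrightarrow \Map_{\cC}\bigl(\cZ(S^1)^{\otimes m},\cZ(S^1)\bigr).
\]
The second step is to verify the operad axioms. Operadic composition in $f\cE_2$ corresponds geometrically to nesting configurations of disks inside disks, which under the "cut out the surface" construction translates exactly into gluing the corresponding cobordisms along their boundary circles. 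Since composition in $2\text{Cob}$ is by such gluing, and $\cZ$ is a (symmetric monoidal) functor, it preserves these compositions. Similarly, the $\Sigma_m$-action on $f\cE_2(m)$ by relabeling disks is intertwined with the $\Sigma_m$-action on $\cZ(S^1)^{\otimes m}$ via the symmetric monoidal structure. The fact that rotations of individual framings are absorbed — giving the \emph{framed} rather than ordinary $\cE_2$ structure — reflects that the parameterization of each boundary $S^1$ is only well-defined up to rotation.

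The main obstacle is promoting this to a genuine map of $\infty$-operads, rather than a compatible collection of maps of topological spaces. This requires a careful model of $2\text{Cob}$ as a symmetric monoidal $\infty$-category in which the "surface cut out from a configuration" construction is an honest family of morphisms (as opposed to merely each fiber being a well-defined cobordism), and a model of $f\cE_2$ compatible with such a family. Once the framed 2-disks operad is identified with the endomorphism operad of $S^1$ in an appropriate $(\infty,1)$-subcategory of $2\text{Cob}$ spanned by genus-zero bordisms (which is essentially the content of the cobordism hypothesis in dimension two restricted to framed disks), the statement follows formally by functoriality of $\cZ$. The reader interested in a rigorous implementation of this argument, and of its extension to the fully extended setting, is referred to the treatment of 2-dimensional TFTs via the cobordism hypothesis in \cite{jacob TFT} and the operadic foundations in \cite{dag6}.
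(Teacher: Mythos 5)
Your proposal is correct and follows essentially the same route as the paper, which proves the proposition simply by observing that a configuration of $m$ framed little disks in the standard disk determines (by removing their interiors) a genus-zero cobordism $(S^1)^{\coprod m}\to S^1$, continuously in the configuration and compatibly with composition/gluing, so that applying the symmetric monoidal functor $\cZ$ yields the framed $\cE_2$-structure on $\cZ(S^1)$. Your additional remarks about promoting this to an honest map of $\infty$-operads make explicit a technical point the paper leaves implicit, but do not change the argument.
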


A natural categorified analogue of the Deligne conjecture states
that the Drinfeld center of a monoidal $\infty$-category is an
$\cE_2$-category. For dualizable and self-dual
$k$-linear $\infty$-categories, one can ask if this structure comes from a
natural framed $\cE_2$, or ribbon, structure. We observe that our
identification of the Drinfeld center of $\qc(X)$ with sheaves on
the loop space, combined with the construction of the topological
field theory $\cZ_X$, automatically solves this conjecture in the
applicable cases:

\begin{cor}
For $X$ a perfect stack, the center
$\Tr(\qc(X))\simeq \qc(\cL X)$
has the structure of a stable framed $\cE_2$-category. 
For a map $X\to Y$ of perfect stacks, the same
holds for the center $\Tr(\qc(X\ti_Y X))\simeq \qc(\cL
X)$.
\end{cor}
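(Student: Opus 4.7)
The plan is to apply the preceding proposition --- which asserts that for \emph{any} 2d TFT $\cZ: 2\mathrm{Cob} \to \cC$, the value $\cZ(S^1)$ acquires a framed $\cE_2$-algebra structure --- to the specific TFT $\cZ_X$ constructed just above. Since by definition $\cZ_X(S^1) = \qc(\cL X)$, this immediately equips $\qc(\cL X)$ with a framed $\cE_2$-algebra structure in $\mathrm{Cat}_\infty^{\mathrm{ex}}$, which is precisely a stable framed $\cE_2$-category in our terminology. The identification $\Tr(\qc(X)) \simeq \qc(\cL X)$ from Corollary \ref{cor center} then transports this structure onto the center.

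Unwinding how the framed $\cE_2$-action arises: a configuration of $m$ small disjoint disks inside the standard 2-disk determines an oriented cobordism from $(S^1)^{\coprod m}$ to $S^1$ (the pair-of-pants-like surface obtained by removing the interiors), and these configurations compose compatibly with the operadic composition of $\cE_2^{\mathrm{fr}}$. Because $\cZ_X$ is symmetric monoidal and Theorem \ref{Thomason tensor} supplies the identification $\cZ_X(m) \simeq \qc(\cL X)^{\otimes m}$, applying $\cZ_X$ to this family of morphisms yields the desired framed $\cE_2$-operations on $\qc(\cL X)$ as an object of $\mathrm{Cat}_\infty^{\mathrm{ex}}$.

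For the relative statement, I would apply the same argument verbatim to the perfect stack $Y$, producing a framed $\cE_2$-structure on $\qc(\cL Y) = \cZ_Y(S^1)$. Theorem \ref{thm convolution} identifies $\qc(\cL Y)$ with the center of the convolution monoidal $\infty$-category $\qc(X\times_Y X)$, and so endows this center with the claimed structure.

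There is no substantial obstacle: all of the technical work has already been discharged in constructing the TFT $\cZ_X$. That construction required Theorem \ref{Thomason tensor} to match tensor products of quasi-coherent categories with products of loop spaces, Corollary \ref{cor finite mapping stacks have air} to ensure perfection of the mapping stacks $X^\Sigma$, and the base-change and projection-formula results of Section \ref{base change} to define and compose the pull-push functors along cobordism correspondences. Given those ingredients, the framed $\cE_2$-structure is a formal operadic consequence of the embedding of the framed little 2-disks operad into the endomorphisms of $S^1$ in $2\mathrm{Cob}$.
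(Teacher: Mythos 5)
Your proposal is correct and follows essentially the same route as the paper: the corollary is precisely the observation that the proposition on 2d TFTs endows $\cZ_X(S^1)=\qc(\cL X)$ with a framed $\cE_2$-structure, transported through the identifications of Corollary \ref{cor center} (and, in the relative case, Theorem \ref{thm convolution} applied with the TFT attached to the perfect base $Y$). Your reading of the relative statement via $\qc(\cL Y)\simeq \cZ(\qc(X\times_Y X))$ is the intended one, so there is nothing to add.
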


In fact, the same arguments prove a case of the categorified form of
the Kontsevich conjecture. We consider $\qc(X^{S^n})$ as part of an
$n+1$-dimensional topological field theory, with operations given by
cobordisms of $n$-manifolds.

\begin{cor}
For $X$ a perfect stack, the $\cE_n$-Hochschild cohomology of the
stable $\cE_n$-category $\qc(X)$ has the structure of a stable
(framed) $\cE_{n+1}$-category.
\end{cor}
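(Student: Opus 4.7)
The plan is to deduce the framed $\cE_{n+1}$-structure on $\hh^*_{\cE_n}(\qc(X))$ directly from the bordism-of-spaces TFT $S_X : {\rm Bord}_{\rm Spaces} \to {\rm Cat}_\infty^{\rm ex}$ constructed earlier in this section, by exhibiting the framed $\cE_{n+1}$-operad as an endomorphism operad of $S^n$ inside $\mathrm{Bord}_{\rm Spaces}$. By Corollary~\ref{cor higher center}, there are canonical equivalences
$$\hh^*_{\cE_n}(\qc(X)) \simeq \qc(X^{S^n}) \simeq S_X(S^n),$$
so it suffices to produce a framed $\cE_{n+1}$-action on $S_X(S^n)$ in ${\rm Cat}_\infty^{\rm ex}$.

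First I would construct a map of $\infty$-operads from the framed $\cE_{n+1}$-operad to the endomorphism operad of $S^n$ in $\mathrm{Bord}_{\rm Spaces}$. Given a configuration of $m$ disjoint framed $(n+1)$-disks $D_1,\dots, D_m \hookrightarrow D^{n+1}$, take the complement $\Sigma$ of the interiors of the $D_i$; its boundary is the disjoint union of the $m$ spheres $\partial D_i$ together with $\partial D^{n+1} = S^n$, so the pair of boundary inclusions
$$(S^n)^{\amalg m} \longrightarrow \Sigma \longleftarrow S^n$$
defines a morphism in $\mathrm{Bord}_{\rm Spaces}$. Varying the configuration produces the desired map of operads; the framings of the embedded disks supply the framing data, and composition of configurations corresponds to gluing cobordisms along common boundary spheres, which is a model for the homotopy pushout defining composition in $\mathrm{Bord}_{\rm Spaces}$.

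Composing this map with the symmetric monoidal functor $S_X$ and using the identification $S_X(S^n)^{\otimes m} \simeq S_X((S^n)^{\amalg m})$ from the monoidal structure of $S_X$ produces the desired framed $\cE_{n+1}$-action on $\qc(X^{S^n})$. Concretely, the operation attached to a configuration with complement $\Sigma$ is realized as pullback and pushforward along the correspondence of perfect stacks
$$(X^{S^n})^{\times m} \longleftarrow X^\Sigma \longrightarrow X^{S^n},$$
giving after applying $\qc$ a functor
$$\qc(X^{S^n})^{\otimes m} \simeq \qc\bigl((X^{S^n})^{\times m}\bigr) \longrightarrow \qc(X^\Sigma) \longrightarrow \qc(X^{S^n}),$$
where the first equivalence uses Theorem~\ref{Thomason tensor}. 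Compatibility with operadic composition follows from the fact that the mapping-stack functor $X^{(-)}$ converts homotopy pushouts of spaces (gluing of cobordisms) into derived fiber products of stacks, combined with the base change property for perfect morphisms (Proposition~\ref{Thomason pushforwards}) which ensures that the pullback-pushforward functors compose correctly.

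The main obstacle is promoting the geometric picture above from a statement at the level of individual spaces of operations to a genuine map of $\infty$-operads, retaining all higher coherences. This amounts to verifying that the assignment sending a configuration of framed $(n+1)$-disks to the complementary cobordism, viewed as a morphism in the $\infty$-categorical $\mathrm{Bord}_{\rm Spaces}$, defines a symmetric monoidal functor out of a model for the framed little $(n+1)$-disks operad. Once this is in place, the rest is immediate from the results already established, since $S_X$ is symmetric monoidal by construction and the relevant correspondences involve only perfect stacks, so the resulting operations are automatically colimit-preserving exact functors and the required coherences are inherited from those of $S_X$.
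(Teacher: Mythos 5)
Your proposal is correct and is essentially the argument the paper itself gives: identify $\hh^*_{\cE_n}(\qc(X))$ with $\qc(X^{S^n})\simeq S_X(S^n)$ via Corollary~\ref{cor higher center}, and then obtain the framed $\cE_{n+1}$-operations from complements of configurations of framed $(n+1)$-disks, viewed as cobordisms of $n$-spheres acted on by the symmetric monoidal functor $S_X$ (i.e., by pullback--pushforward along the correspondences $(X^{S^n})^{\times m}\leftarrow X^\Sigma\rightarrow X^{S^n}$, with coherences from base change and Theorem~\ref{Thomason tensor}). The coherence issue you flag---promoting the disk-complement construction to a genuine map of $\infty$-operads---is likewise left implicit in the paper, which treats it at the same level of detail as the 2d case.
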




\end{document}